\newtheorem{theorem}{Theorem}[section]
\newtheorem{proposition}[theorem]{Proposition}
\newtheorem{corollary}[theorem]{Corollary}
\newtheorem{conjecture}[theorem]{Conjecture}
\newtheorem{definition}[theorem]{Definition}
\newtheorem{example}[theorem]{Example}
\newtheorem{lemma}[theorem]{Lemma}
\newtheorem{remark}[theorem]{Remark}
\newcommand{\depth}{{\rm depth}}
\newcommand{\exc}{{\rm exc}}
\newcommand{\link}{{\rm link}}
\newcommand{\reg}{{\rm reg}}
\newcommand{\sd}{{\rm sd}}
\newcommand{\Shift}{{\rm shift}}
\newcommand{\Star}{{\rm star}}
\newcommand{\Tor}{{\rm Tor}}
\newcommand{\aA}{{\mathcal A}}
\newcommand{\cC}{{\mathcal C}}
\newcommand{\fF}{{\mathcal F}}
\newcommand{\gG}{{\mathcal G}}
\newcommand{\qQ}{{\mathcal Q}}
\newcommand{\RR}{{\mathbb R}}
\newcommand{\fD}{{\mathfrak D}}
\newcommand{\fS}{{\mathfrak S}}
\newcommand{\NN}{{\mathbb N}}
\newcommand{\FF}{{\mathbb F}}
\renewcommand{\to}{\rightarrow}
\newcommand{\sm}{{\smallsetminus}}
\begin{document}
\title[Combinatorics of antiprism triangulations]
{Combinatorics of antiprism triangulations}

\author{Christos~A.~Athanasiadis}
\address{Department of Mathematics\\
National and Kapodistrian University of Athens\\
Panepistimioupolis\\
15784 Athens, Greece}
\email{caath@math.uoa.gr}

\author{Jan-Marten~Brunink}
\address{Universit\"at Osnabr\"uck\\
Fakult\"at f\"ur Mathematik\\
Albrechtstrasse 28A\\
49076 Osnabr\"uck, Germany}
\email{janmarten.brunink@uni-osnabrueck.de}

\author{Martina~Juhnke-Kubitzke}
\address{Universit\"at Osnabr\"uck\\
Fakult\"at f\"ur Mathematik\\
Albrechtstrasse 28A\\
49076 Osnabr\"uck, Germany}
\email{juhnke-kubitzke@uni-osnabrueck.de}

\date{August 18, 2021}
\thanks{ \textit{Key words and phrases}. 
Simplicial complex, triangulation, antiprism, 
face enumeration, $h$-polynomial, real-rootedness, 
Lefschetz property.}

\begin{abstract}
The antiprism triangulation provides a natural way 
to subdivide a simplicial complex $\Delta$, similar to 
barycentric subdivision, which appeared independently 
in combinatorial algebraic topology and computer 
science. It can be defined as the simplicial complex 
of chains of multi-pointed faces of $\Delta$, from 
a combinatorial point of view, and by successively 
applying the antiprism construction, or balanced 
stellar subdivisions, on the faces of $\Delta$, from 
a geometric point of view. 

This paper studies enumerative invariants associated 
to this triangulation, such as the transformation of 
the $h$-vector of $\Delta$ under antiprism 
triangulation, and algebraic properties of its 
Stanley--Reisner ring. Among other results, it is 
shown that the $h$-polynomial of the antiprism 
triangulation of a simplex is real-rooted and that 
the antiprism triangulation of $\Delta$ has the 
almost strong Lefschetz property over $\RR$ for every 
shellable complex $\Delta$. Several related open 
problems are discussed.
\end{abstract}

\maketitle

\section{Introduction}
\label{sec:intro}

Barycentric subdivision provides a natural way to 
triangulate a simplicial complex $\Delta$, of 
fundamental importance in algebraic topology. Because 
of its especially nice enumerative and algebraic 
properties, it has also been studied intensely from 
the point of view of enumerative and algebraic 
combinatorics \cite{BW08, CKW18, KMS19, KN09, 
KW08, Mur10b, NPT11}, \cite[Chapter~9]{Pet15}. For 
instance, Brenti and Welker~\cite{BW08} described in 
explicit combinatorial terms the transformation of the 
$h$-vector (a fundamental enumerative invariant) of 
$\Delta$, under barycentric subdivision, and showed 
that the $h$-polynomial (the generating polynomial 
for the $h$-vector) of the barycentric subdivision 
of $\Delta$ has only real roots (and in particular, 
log-concave and unimodal coefficients) for every 
simplicial complex $\Delta$ with nonnegative 
$h$-vector. 

A similar, but combinatorially more intricate and 
much less studied than barycentric subdivision, 
way to subdivide $\Delta$ is provided by the 
\emph{antiprism triangulation}, denoted here by 
$\sd_\aA(\Delta)$. To give the reader a hint on 
the comparison between the two triangulations, we 
recall that the barycentric subdivision of a 
geometric simplex $\Sigma$ can be constructed 
by inserting a vertex in the interior of $\Sigma$ 
and coning over its proper faces, which have been 
barycentrically subdivided by induction. The 
antiprism triangulation $\sd_\aA(\Sigma)$ instead 
can be constructed by inserting another simplex of
the same dimension in the interior of $\Sigma$, 
whose vertices are in a given one-to-one 
correspondence with those of $\Sigma$, and 
joining each nonempty face of that simplex with
the antiprism triangulation of the complementary 
face of $\Sigma$. Figure~\ref{fig:2simplex} shows 
the antiprism triangulation of a 2-dimensional 
simplex (the labeling of faces is explained in 
Section~\ref{sec:atriang}). As an abstract 
simplicial complex, the barycentric subdivision 
of $\Delta$, denoted here by $\sd(\Delta)$,
has faces which correspond bijectively to the 
ordered partitions of the faces of $\Delta$; in 
particular, the vertices and facets of $\sd(\Delta)$ 
correspond bijectively to the nonempty faces and the 
permutations of the facets of $\Delta$, respectively. 
The faces of $\sd_\aA(\Delta)$ instead correspond 
bijectively to certain multi-pointed ordered 
partitions of the faces of $\Delta$; in particular, 
the vertices and facets of $\sd_\aA(\Delta)$ 
correspond bijectively to the pointed faces and the 
ordered partitions of the facets of $\Delta$, 
respectively.

The antiprism triangulation was introduced 
by Izmestiev and Joswig \cite{IJ03} as a technical 
device in their effort to understand combinatorially 
branched coverings of manifolds, and arose 
independently and was studied under the name 
\emph{chromatic subdivision} in computer science 
(specifically, in theoretical distributed computing); 
see \cite{Ko12} and references therein.
This paper aims to show that, as is the case with 
barycentric subdivision, the antiprism
triangulation has very interesting enumerative and 
algebraic properties and that its study leads to 
combinatorial problems which are often more 
challenging than the corresponding ones for the 
barycentric subdivision. We denote by 
$h(\Delta, x)$ the $h$-polynomial of a simplicial 
complex $\Delta$ and by $\sigma_n$ the (abstract) 
simplex on an $n$-element vertex set. Our main 
motivation comes from the following conjectural 
analogue of the main result of~\cite{BW08}.

\begin{figure}
\centering
  \includegraphics[width=0.7\textwidth]{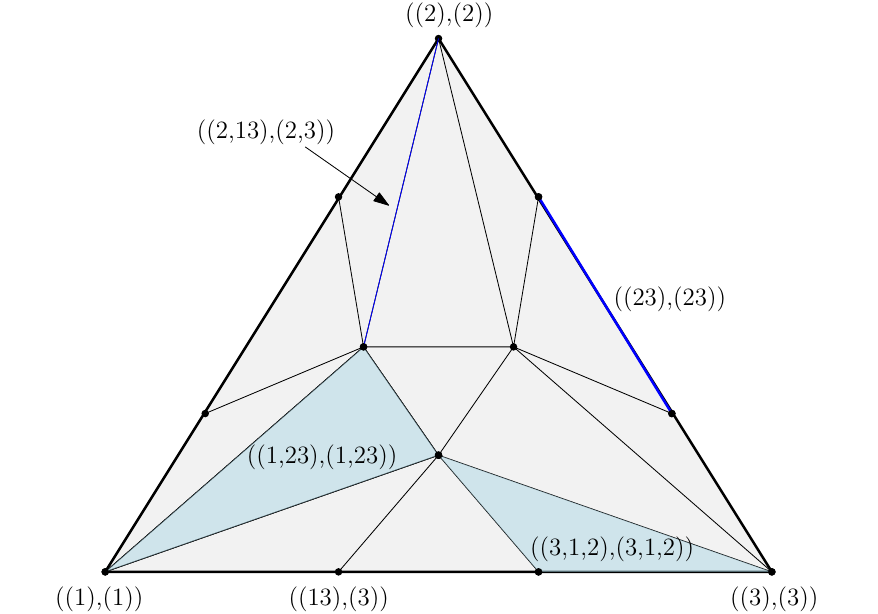}
  \caption{Antiprism triangulation of the 2-simplex}
  \label{fig:2simplex}
\end{figure}

\begin{conjecture} \label{conj:main}
The polynomial $h(\sd_\aA(\Delta), x)$ is real-rooted
for every simplicial complex $\Delta$ with nonnegative 
$h$-vector. 
\end{conjecture}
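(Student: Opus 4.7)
The plan is to adapt the Brenti--Welker strategy from barycentric subdivision to the antiprism setting. The goal is to express $h(\sd_\aA(\Delta),x)$ as a nonnegative linear combination of a family of real-rooted polynomials that depend only on $\dim \Delta$, and to prove that this family is interlacing; then nonnegative linear combinations inherit real-rootedness by a classical theorem of Wagner.

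As a first step, one needs a formula of the shape
\[
h(\sd_\aA(\Delta),x) \;=\; \sum_{i=0}^{d} h_i(\Delta)\,\varphi_{d,i}(x),
\]
where $d-1=\dim \Delta$ and each $\varphi_{d,i}(x)\in\NN[x]$. A natural way to derive such a formula is to use the description of $\sd_\aA(\Delta)$ via chains of multi-pointed faces of $\Delta$ recalled in the introduction, decompose facets of $\sd_\aA(\Delta)$ according to the minimal face of $\Delta$ that they meet, and then repackage the face counts using standard $h$-polynomial reciprocity. The second step is the simplex case $\Delta = \sigma_n$; real-rootedness of $h(\sd_\aA(\sigma_n),x)$ is already announced in the abstract and serves both as evidence for, and as a base case toward, the general statement. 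The third step is to show that for each fixed $d$ the sequence $(\varphi_{d,0}(x),\ldots,\varphi_{d,d}(x))$ is an interlacing sequence of real-rooted polynomials; combined with $h_i(\Delta)\geq 0$, this yields the conjecture.

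The decisive difficulty lies in the interlacing step. In the barycentric case, the analogous building blocks are essentially restricted Eulerian polynomials, whose interlacing is classical; the antiprism analogues refine these through the pointed-face structure and are much less understood. A promising route is to derive a recursion for the $\varphi_{d,i}(x)$ from the inductive geometric construction of $\sd_\aA(\sigma_n)$ sketched in the introduction (insert a parallel simplex in the interior, then cone the antiprism triangulation of each complementary face), and to verify that this recursion preserves the interlacing cone, for example using Br\"and\'en's calculus of real-rooted linear operators. An alternative, partial route is algebraic: the almost strong Lefschetz property over $\RR$ announced in the abstract for antiprism triangulations of shellable complexes forces real-rootedness for every shellable $\Delta$, which already covers a wide class of nonnegative $h$-vectors and suggests what quadratic form to consider in the general case.
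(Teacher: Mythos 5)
The statement you are addressing is Conjecture~\ref{conj:main}, which the paper does \emph{not} prove; it remains open. What the paper does is (i) establish the simplex case as Theorem~\ref{thm:mainA}, including the stronger fact that $h_\aA(\sigma_n,x)$ has a nonnegative, real-rooted and interlacing symmetric decomposition with respect to $n-1$; and (ii) invoke a general criterion for uniform triangulations, namely \cite[Theorem~1.2]{Ath20+}, which reduces Conjecture~\ref{conj:main} to one further interlacing statement, Conjecture~\ref{conj:thetaA}: that $h_\aA(\sigma_{n-1},x)$ interlaces $\theta_\aA(\sigma_n,x)=h_\aA(\sigma_n,x)-h_\aA(\partial\sigma_n,x)$. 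Your proposed expansion $h(\sd_\aA(\Delta),x)=\sum_i h_i(\Delta)\,\varphi_{d,i}(x)$ is exactly the paper's Equation~\eqref{eq:h-transform} with $\varphi_{d,i}(x)=\sum_j p_\aA(n,i,j)x^j$, and you correctly single out interlacing of these coefficient polynomials as the crux. But that step is precisely what is still conjectural: neither your proposal nor the paper closes it, so what you have is a roadmap that coincides (up to reformulation) with the paper's reduction, not a proof.

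Your fallback route contains a genuine error. The almost strong Lefschetz property over $\RR$ does not ``force real-rootedness'' of the $h$-polynomial; it yields that $g(\sd_\aA(\Delta))$ is an M-sequence and hence that the $h$-vector is unimodal with a controlled peak, which is what Theorem~\ref{thm:LefschetzUnimodal} actually delivers, and unimodality is strictly weaker than real-rootedness. There is no implication from Lefschetz-type algebraic structure to real roots, which is why the paper keeps the algebraic side (Section~\ref{sec:aLefschetz}) and the real-rootedness side (Section~\ref{sec:enumerate}) as genuinely separate threads.
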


This conjecture is part of the general problem to 
understand when the $h$-polynomial of a triangulation 
of a simplicial complex is real-rooted. The present 
study of antiprism triangulations has partly motivated 
the study of this problem for the much more general 
class of uniform triangulations~\cite{Ath20+}. Although 
we are unable to fully settle \Cref{conj:main} in this 
paper, we reduce it to an interlacing relation
between the members of two concrete infinite sequences
of polynomials (see Conjecture~\ref{conj:thetaA}), 
given the following important special case of the 
conjecture and \cite[Theorem~1.2]{Ath20+}.
\begin{theorem} \label{thm:mainA}
The polynomial $h(\sd_\aA(\sigma_n), x)$ is real-rooted
and has a nonnegative, real-rooted and interlacing 
symmetric decomposition with respect to $n-1$ for 
every positive integer $n$. 
\end{theorem}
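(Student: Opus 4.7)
The plan is to analyze $h_n(x) := h(\sd_\aA(\sigma_n), x)$ via the recursive structure of the antiprism triangulation, and to prove existence of the required symmetric decomposition, with its real-rootedness and interlacing, by induction on $n$. First I would derive an explicit recurrence for $h_n(x)$ from the inductive geometric description of $\sd_\aA(\sigma_n)$ --- an inner simplex $\sigma_n^\circ$ whose vertices are in a fixed bijection with those of $\sigma_n$, joined, for each nonempty $S\subseteq[n]$, from its face indexed by $S$ to $\sd_\aA$ of the complementary face of $\sigma_n$ --- using inclusion--exclusion over $S$. This should express $h_n(x)$ as a combination of the $h_k(x)$ with $k<n$ times polynomial factors accounting for the contribution of the inner simplex. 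An equivalent formula can be obtained directly from the fact that facets of $\sd_\aA(\sigma_n)$ correspond to ordered partitions of $[n]$: a suitable shelling yields $h_n(x)$ as the generating polynomial of a descent-type statistic on ordered partitions.

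Next, since $\sd_\aA(\sigma_n)$ triangulates the $(n-1)$-ball, Stanley's theory of $h$-polynomials of triangulated balls supplies a canonical symmetric decomposition $h_n(x) = a_n(x) + x\, b_n(x)$ with respect to $n-1$, with $a_n$ palindromic of degree at most $n-1$, $b_n$ palindromic of degree at most $n-2$, and both having nonnegative coefficients; combinatorially $a_n$ encodes the $h$-polynomial of the boundary (which here equals $\sd_\aA(\partial \sigma_n)$) while $x\, b_n$ captures the interior contribution. The recurrence for $h_n$ can then be split into separate recurrences for $a_n$ and $b_n$.

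To close the argument, I would prove by induction on $n$ that $a_n$ and $b_n$ are real-rooted and that $b_n \preceq a_n$; real-rootedness of $h_n = a_n + x\, b_n$ then follows from the standard linear-combination lemma for interlacing polynomials. The induction hypothesis must bundle together several interlacing relations among the $a_k, b_k$ for $k<n$, rich enough to survive the recurrence; the technical backbone is the preservation of interlacing under nonnegative linear combinations and under multiplication by $x$. The main obstacle will be choosing the correct strengthened hypothesis and rewriting the recurrence --- which initially involves alternating signs from inclusion--exclusion --- as a manifestly nonnegative combination of polynomials in the interlacing-compatible family. The template is the Brenti--Welker treatment of barycentric subdivision, but the extra structure of the inner simplex in the antiprism construction yields noticeably more intricate bookkeeping.
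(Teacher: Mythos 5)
Your outline captures the right spirit — recurrence plus induction via interlacing — but the specific route you propose diverges from the paper in ways that matter, and the gap you flag as the ``main obstacle'' is in fact the point where the plan, as stated, breaks.

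The recurrence one actually obtains (Proposition~\ref{prop:h-sdA(simplex)}(a), derived from Proposition~\ref{prop:A(Delta)} via interior-face counting, not inclusion--exclusion) is already sign-free:
\[
h_\aA(\sigma_n, x) \ = \ \sum_{k=0}^{n-1} \binom{n}{k} x^k\, h_\aA(\sigma_k, 1/x).
\]
The difficulty is not alternating signs but the appearance of the \emph{reversed} polynomials $x^k h_\aA(\sigma_k,1/x)$ on the right. Since the symmetric decomposition $h_\aA(\sigma_n,x) = a_n(x) + x\,b_n(x)$ flips under $p(x)\mapsto x^{n-1}p(1/x)$ (the $a$-part is fixed, the $b$-part picks up a factor), this recurrence does not cleanly split into separate recurrences for $a_n$ and $b_n$; the paper does not attempt such a split and it is unclear one exists in a usable form. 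Also, even granting $b_n$ real-rooted, $a_n$ real-rooted, and $b_n \preceq a_n$, real-rootedness of $a_n + x b_n$ does not follow from the basic linear-combination lemma alone — one also needs to control the position of $xb_n$ relative to $a_n$, which is exactly the content that Br\"and\'en--Solus \cite{BS20} packages.

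What the paper actually does is sidestep $a_n, b_n$ entirely. It introduces the auxiliary family
\[
q_{n,r}(x) \ := \ \sum_{k=0}^n \binom{n}{k} x^{k+r}\, h_\aA(\sigma_{k+r}, 1/x),
\]
observes that $q_{n,r} = q_{n-1,r} + q_{n-1,r+1}$ (Pascal's rule), and shows by induction that the sequence $\qQ_n = (q_{n,0},\, q_{n-1,1},\,\dots,\, q_{0,n},\, q_{0,n+1})$ is an interlacing sequence, using that its leading entries are partial sums of the reverse of $\qQ_{n-1}$ together with Lemma~\ref{lem:inter}. From the interlacing of $\qQ_n$ one reads off directly that $h_\aA(\sigma_n,x)$ is real-rooted, that it interlaces $h_\aA(\sigma_{n+1},x)$, and — crucially — that $x^{n-1} h_\aA(\sigma_n,1/x)$ interlaces $h_\aA(\sigma_n,x)$. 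The nonnegativity of the symmetric decomposition is already known (Proposition~\ref{prop:A(uDelta)}), so \cite[Theorem~2.6]{BS20} immediately upgrades this to a nonnegative, real-rooted and interlacing symmetric decomposition, with no separate analysis of $a_n$ and $b_n$ required.

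So: keep the recurrence, but abandon the plan of splitting it over the symmetric decomposition. The missing idea is an auxiliary family (here the $q_{n,r}$) designed so that Pascal's rule converts the binomial recurrence into the partial-sum structure that Lemma~\ref{lem:inter}(b) handles, plus the observation that it suffices to prove the single interlacing $x^{n-1}h_\aA(\sigma_n,1/x) \preceq h_\aA(\sigma_n,x)$ and then cite \cite{BS20}.
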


We also prove the unimodality of $h(\sd_\aA(\Delta), x)$ 
for every Cohen--Macaulay simplicial complex $\Delta$
and show that the peak appears in the middle, by 
studying Lefschetz properties of the Stanley--Reisner 
ring of $\sd_\aA(\Delta)$. The following result is 
an analogue of the main result of~\cite{KN09} for the
barycentric subdivision. 
\begin{theorem} \label{thm:LefschetzUnimodal}
The complex $\sd_\aA(\Delta)$ has the almost strong 
Lefschetz property over $\RR$ for every shellable 
simplicial complex $\Delta$. 

Moreover, for every $(n-1)$-dimensional 
Cohen--Macaulay simplicial complex $\Delta$, the 
$h$-vector of $\sd_\aA(\Delta)$ is unimodal, with 
the peak being at position $n/2$, if $n$ is even, 
and at $(n-1)/2$ or $(n+1)/2$, if $n$ is odd. 
\end{theorem}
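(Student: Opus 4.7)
The plan is to establish the almost strong Lefschetz property for shellable $\Delta$ first, and then deduce the unimodality statement for all Cohen--Macaulay $\Delta$ by passing to a shellable complex with the same $h$-vector. The second reduction relies on the fact that $h(\sd_\aA(\Delta), x)$ is determined as an explicit linear function of the $h$-vector of $\Delta$ (the transformation formula for this uniform triangulation, presumably recorded earlier in the paper), together with Stanley's theorem characterizing $h$-vectors of Cohen--Macaulay simplicial complexes as O-sequences, each of which is realized as the $h$-vector of some shellable complex. Thus, given Cohen--Macaulay $\Delta$, there is a shellable $\Delta'$ with $h(\sd_\aA(\Delta), x) = h(\sd_\aA(\Delta'), x)$, and the almost strong Lefschetz property for $\sd_\aA(\Delta')$ translates in the standard way into the inequalities $h_i \leq h_{i+1}$ for $i$ up to the appropriate middle degree, yielding the asserted unimodality and peak location.

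For the Lefschetz statement, I would argue by induction on the number of facets of $\Delta$, using a shelling $F_1, F_2, \ldots, F_s$ and the corresponding filtration of subcomplexes $\Delta_i := F_1 \cup \cdots \cup F_i$. The base case is $\Delta = \sigma_n$, for which an explicit almost strong Lefschetz element for the Artinian reduction of $\kk[\sd_\aA(\sigma_n)]$ must be exhibited. A natural candidate is a generic linear combination of the vertex variables of $\sd_\aA(\sigma_n)$ in which the coefficient attached to a vertex depends only on the cardinality of its labelling pointed face (recall from Figure~\ref{fig:2simplex} that vertices of $\sd_\aA(\sigma_n)$ are labelled by pointed faces of $\sigma_n$). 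In the inductive step, attaching a shelling facet $F_i$ to $\Delta_{i-1}$ amounts to gluing $\sd_\aA(\sigma_{|F_i|})$ along the subdivision of the portion of the boundary that lies in $\Delta_{i-1}$; algebraically this gives a short exact sequence relating $\kk[\sd_\aA(\Delta_i)]$ to $\kk[\sd_\aA(\Delta_{i-1})]$ and a relative piece controlled by the restriction of the shelling at step $i$. The framework is modeled on the argument of Kubitzke and Nevo~\cite{KN09} for the barycentric subdivision.

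The main obstacle is carrying the chosen Lefschetz element compatibly through the induction. Because $\sd_\aA(\Sigma)$ contains an interior simplex of the same dimension as $\Sigma$, rather than the single interior vertex inserted by barycentric subdivision, the bookkeeping required to follow injectivity of the multiplication maps across each gluing is substantially heavier: the weights in the Lefschetz element have to be tuned so as to balance the contributions of the inner and outer vertices of each antiprism, and one must verify that the degree range in which injectivity holds is exactly one short of what the full strong Lefschetz property would demand (this is precisely what is encoded by the word ``almost''). Once this delicate degree bookkeeping is in place, the inductive step goes through by the same mapping-cone type argument used for barycentric subdivisions, and the theorem follows.
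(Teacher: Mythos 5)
Your outline for the unimodality statement is essentially the paper's: reduce from Cohen--Macaulay to shellable via the fact that both classes have the same sets of $h$-vectors, combine the almost strong Lefschetz property for shellable complexes with symmetry/monotonicity properties of the transformation coefficients $p_\aA(n,k,j)$, and track the peak position. This is the same route the paper takes (via Lemma~\ref{lemma:propertiesTrafo} and the adaptation of \cite[Corollary~4.7]{KN09}).

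For the Lefschetz statement, however, there is a genuine gap in your base case. You propose to exhibit an explicit almost strong Lefschetz element for $\sd_\aA(\sigma_n)$ by taking a generic linear combination of vertex variables whose coefficients depend only on the cardinality of the labelling pointed face, but you give no argument that any such element works; and in fact the paper does not proceed this way at all. Instead, the paper proves that $\sd_\aA(\sigma_n)$ satisfies the strong Link Condition with respect to its interior vertices (Lemma~\ref{lem:SLCAntiprism}), then iteratively contracts the interior edges in an admissible way (Lemma~\ref{lem:LinkCondition} and Proposition~\ref{prop:LinkCondition}, which extends Murai's edge-contraction result to the almost strong setting), reducing $\sd_\aA(\sigma_n)$ to the cone over its boundary; separately it shows $\partial(\sd_\aA(\sigma_n))$ is a polytopal sphere (by realizing crossing operations as compositions of stellar subdivisions, following Billera--Magurn), hence strong Lefschetz over $\RR$ by Stanley's theorem. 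None of this machinery is present in your sketch, and constructing and verifying an explicit Lefschetz element for $\sd_\aA(\sigma_n)$ directly would be a substantial task for which no argument is offered.

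A secondary point: the obstacle you identify in the inductive step -- having to ``carry the chosen Lefschetz element compatibly through the induction'' by tuning coefficient weights across each antiprism -- is not actually an obstacle in the framework both you and the paper invoke. The KN09-style induction (which Theorem~\ref{thm:LefschetzShellable} follows) works with a generic linear form $\omega$ and a single l.s.o.p.\ valid simultaneously for all three Stanley--Reisner rings appearing in the Mayer--Vietoris exact sequence; injectivity of the middle multiplication map then follows from the vanishing of a $\Tor$ group and a commutative-diagram argument, with no need to propagate a specific Lefschetz element across gluings. Focusing on that bookkeeping would lead you away from the standard argument rather than toward it.
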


This paper is structured as follows. The antiprism 
triangulation $\sd_\aA(\Delta)$ is described 
combinatorially as an abstract simplicial complex 
and defined geometrically as a triangulation, using
either the antiprism construction, or balanced 
stellar subdivisions (crossing operations), in 
Section~\ref{sec:atriang}. The antiprism construction 
is defined in Section~\ref{sec:aconstr}, where its 
face enumeration is studied within the framework of 
uniform triangulations, introduced in~\cite{Ath20+}. 
These results are then applied in 
Section~\ref{sec:enumerate} to find combinatorial 
interpretations and recurrences for the basic 
enumerative invariants of the antiprism triangulation 
of the simplex. The face enumeration of antiprism 
triangulations turns out to be related to traditional 
combinatorial themes, such as ordered set partitions,
colorings and the enumeration of permutations by 
excedances (for example, the number of facets of 
$\sd_\aA(\sigma_n)$ is equal to the number of ordered 
partitions of an $n$-element set). 
Section~\ref{sec:enumerate} also proves 
Theorem~\ref{thm:mainA} and describes combinatorially 
the transformation of the $h$-vector of a simplicial 
complex, under antiprism triangulation. The proof of
Theorem~\ref{thm:mainA} is different from all proofs 
of the corresponding result for the barycentric subdivision
known to the authors; it exploits the recurrence 
for the $h$-polynomial of $\sd_\aA(\sigma_n)$ and 
uses the concept of interlacing sequence of 
polynomials. Theorem~\ref{thm:LefschetzUnimodal} is 
proved in Section~\ref{sec:aLefschetz}; the method 
generally follows those of \cite{KN09, Mur10}, with 
certain complications and shortcuts.

Basic background and definitions, together with 
some preliminary technical results, are included
in Section~\ref{sec:pre} for simplicial complexes,
their triangulations and face enumeration, and for 
the unimodality and real-rootedness of polynomials
and their symmetric decompositions, and in 
Section~\ref{sec:Lefschetz} 
for Lefschetz properties of simplicial complexes. 
Open problems, other than those proposed earlier
in the paper, and further directions for research
are discussed in Section~\ref{sec:direct}.

\section{Preliminaries}
\label{sec:pre}

This section includes preliminaries on simplicial 
complexes and triangulations, their basic enumerative 
invariants and the unimodality of polynomials and 
related properties. Throughout this paper we set 
$\NN := \{0, 1, 2,\dots\}$ and $[n] := 
\{1, 2,\dots,n\}$ for $n \in \NN$. We also denote 
by $\fS_n$ the symmetric group of permutations of 
$[n]$ and by $|V|$ and $2^V$ the cardinality and 
the powerset, respectively, of a finite set $V$. 

\subsection{Simplicial complexes}
\label{sec:complexes}

We start with several definitions and refer to Stanley's 
book \cite{StaCCA} for background and more information. 

Let $V$ be a finite set. An (abstract) \emph{simplicial 
complex} $\Delta$ on the vertex set $V$ 
is a collection of subsets of $V$ that is closed under 
inclusion and such that $\{v\} \in \Delta$ for every
$v\in V$. Throughout this article, we assume that all 
simplicial complexes are finite. The elements of 
$\Delta$ are called \emph{faces} and the inclusionwise 
maximal ones are called \emph{facets}. The 
\emph{dimension} of a face $F \in \Delta$ is defined 
as $\dim(F) = |F|-1$; the \emph{dimension} of $\Delta$, 
denoted by $\dim(\Delta)$, is the maximum dimension of 
its faces. Zero-dimensional and one-dimensional faces 
of $\Delta$ are called \emph{vertices} and \emph{edges}, 
respectively. We say that $\Delta$ is \emph{pure} if 
all facets of $\Delta$ have the same dimension. As 
in~\cite{Ath20+}, we denote by $\sigma_n$ the abstract 
$(n-1)$-dimensional simplex $2^V$ on an $n$-element 
vertex set $V$ (often taken to be $[n]$). 

The \emph{cone} over $\Delta$ is the simplicial 
complex consisting of the faces of $\Delta$, together 
with all sets $F \cup \{u\}$ for $F \in \Delta$, where 
$u \notin V$ is a new vertex, called the \emph{apex}. 
We will denote this cone by $u \ast \Delta$. More 
generally, the (simplicial) \emph{join} of two 
simplicial complexes $\Delta_1$ and $\Delta_2$ with 
disjoint vertex sets is defined as $\Delta_1 \ast 
\Delta_2 = \{ F_1 \cup F_2~ :~ F_1 \in \Delta_1, F_2 
\in \Delta_2\}$. Given a face $F \in \Delta$, the 
\emph{link} and the \emph{star} of $F$ in $\Delta$ 
are defined as the simplicial complexes
\begin{center}
\begin{tabular}{l}
$\link_\Delta(F) \ = \ \{ G \in \Delta~:~ F \cup 
G \in \Delta,\ F \cap G = \varnothing \}$ and \\ 
$\Star_\Delta(F) \ = \ \{ G \in \Delta~: ~F \cup 
G \in \Delta\}$,
\end{tabular} 
\end{center}
respectively. For $G_1, G_2,\dots,G_m \subseteq V$ 
we set
\begin{equation*}
\langle G_1, G_2,\dots,G_m \rangle \ = \ 
\{F~:~ F \subseteq G_i \text{ for some } i \in [m]
       \}.
\end{equation*}

In the sequel, $\Delta$ is a pure $(n-1)$-dimensional 
simplicial complex with vertex set $V$ and $\FF$ is a 
field. Let $A$ be the polynomial ring $\FF[x_v: v 
\in V]$ and write $x_F = \prod_{v \in F} x_v$ for $F 
\subseteq V$. 
The \emph{Stanley-Reisner ring} (or \emph{face ring}) 
of $\Delta$ (over $\FF$) is defined as the quotient 
ring $\FF[\Delta] = A/I_\Delta$, where $I_\Delta=
(x_F: F \subseteq V,\ F \not \in \Delta)$ is the 
ideal of $A$ known as the \emph{Stanley-Reisner ideal} 
(or \emph{face ideal}) of $\Delta$. The ring 
$\FF[\Delta]$ is graded by degree; subscripts on 
$\FF[\Delta]$ and its (standard) graded quotients 
will always refer to homogeneous components.

A \emph{linear system of parameters} (l.s.o.p. for 
short) for $\FF[\Delta]$ is a sequence 
$\Theta=\theta_1,\dots,\theta_n$ of linear forms in 
$\FF[\Delta]$ such that the quotient $\FF[\Delta] / 
\Theta\FF[\Delta]$ has finite dimension, as a vector 
space over $\FF$. The complex $\Delta$ is called 
\emph{Cohen--Macaulay} over $\FF$ if $\FF[\Delta]$
is a free module over the polynomial ring 
$\FF[\Theta]$ for some (equivalently, for every) 
l.s.o.p. $\Theta$ for $\FF[\Delta]$ and 
\emph{shellable} if there exists a linear ordering 
$G_1, G_2,\dots,G_m$ of the facets of $\Delta$ such 
that for each $2 \le j \le m$, the set 
\begin{equation*}
\{F \subseteq G_j~:~ F \not\subseteq G_i \text{ for } 
  1 \le i < j\}
\end{equation*}
has a unique minimal element, with respect to inclusion. 
Even though shellable simplicial complexes constitute a 
proper subclass of that of Cohen--Macaulay complexes, 
the sets of possible $f$-vectors for the two classes 
of simplicial complexes coincide (see, e.g., 
\cite[Theorem~3.3]{StaCCA}).

Given an $(n-1)$-dimensional simplicial complex 
$\Delta$, the \emph{$f$-vector} of $\Delta$ is defined
as the sequence $f(\Delta) = (f_{-1}(\Delta), 
f_0(\Delta),\dots,f_{n-1}(\Delta))$, where $f_i
(\Delta)$ denotes the number of $i$-dimensional faces 
of $\Delta$. The \emph{$h$-vector} of $\Delta$ is 
defined as $h(\Delta)=(h_0(\Delta), 
h_1(\Delta),\dots,h_n(\Delta))$, where $h_i(\Delta)$ 
is given by the formula
\begin{equation*}
h_i(\Delta) \ = \ \sum_{j=0}^i(-1)^{i-j}{n-j \choose 
             i-j} f_{j-1}(\Delta),
\end{equation*}
and $h(\Delta,x) = \sum_{i=0}^n h_i(\Delta)x^i$ is the 
\emph{$h$-polynomial} of $\Delta$. Equivalently, the 
latter can be defined by the formula 
\begin{equation}
\label{eq:hdef}
h(\Delta, x) \ = \ \sum_{i=0}^n f_{i-1} (\Delta) \, 
x^i (1-x)^{n-i} \ = \ \sum_{F \in \Delta} x^{|F|} 
(1-x)^{n-|F|}.
\end{equation}
Assume now that $\Delta$ triangulates an 
$(n-1)$-dimensional ball, meaning that the geometric 
realization of $\Delta$ is homeomorphic to an 
$(n-1)$-dimensional ball (we also say that $\Delta$ 
is a an $(n-1)$-dimensional simplicial ball). The 
\emph{boundary complex} of $\Delta$ is then defined 
as
\begin{equation*}
\partial\Delta \ = \ \langle F \in \Delta~:~F\subseteq 
G \text{ for a unique facet } G \in \Delta\rangle.
\end{equation*}
The set $\Delta^\circ = \Delta \sm \partial\Delta$ 
consists of the \emph{interior} faces of $\Delta$
and $h^\circ(\Delta,x)$ is defined by the 
sum on the far right of (\ref{eq:hdef}) in which 
$\Delta$ has been replaced by $\Delta^\circ$. The 
following well known statement is a special case of 
\cite[Lemma~6.2]{Sta87}.
\begin{proposition} \label{prop:h-ballcirc}
{\rm (\cite{Sta87})} We have $x^n h (\Delta, 
1/x) = h^\circ(\Delta, x)$ for every 
triangulation $\Delta$ of an $(n-1)$-dimensional 
ball.
\end{proposition}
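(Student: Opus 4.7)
The plan is to reduce this ball-version identity to the classical Dehn--Sommerville palindromicity for simplicial spheres via a standard coning trick. Let $v \notin V$ be a new vertex and set $\Sigma = \Delta \cup (v \ast \partial\Delta)$. Since $\partial\Delta$ is itself an $(n-2)$-dimensional triangulated sphere (being the boundary of an $(n-1)$-ball), the cone $v \ast \partial\Delta$ is an $(n-1)$-ball, and $\Sigma$ is obtained by gluing two $(n-1)$-balls along their common boundary sphere. Thus $\Sigma$ is a triangulated $(n-1)$-sphere, so the palindromicity $x^n h(\Sigma, 1/x) = h(\Sigma, x)$ applies; the same palindromicity applied to the sphere $\partial\Delta$ gives $x^{n-1} h(\partial\Delta, 1/x) = h(\partial\Delta, x)$.

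The first concrete step is to relate the three $h$-polynomials. Partitioning the faces of $\Sigma$ into those already lying in $\Delta$ and those of the form $G \cup \{v\}$ for $G \in \partial\Delta$, and using the definition (\ref{eq:hdef}) with parameter $n$ for both $\Delta$ and $\Sigma$ and parameter $n-1$ for $\partial\Delta$, I obtain
\begin{equation*}
h(\Sigma, x) \ = \ h(\Delta, x) + x \cdot h(\partial\Delta, x).
\end{equation*}
Substituting $x \to 1/x$, multiplying through by $x^n$, and invoking palindromicity for $\Sigma$ and $\partial\Delta$ then yields
\begin{equation*}
x^n h(\Delta, 1/x) \ = \ h(\Delta, x) + (x-1)\, h(\partial\Delta, x).
\end{equation*}

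The second step is to identify this right-hand side with $h^\circ(\Delta, x)$. Since $\Delta^\circ = \Delta \sm \partial\Delta$ as a set of faces, splitting the sum in (\ref{eq:hdef}) gives
\begin{equation*}
h(\Delta, x) - h^\circ(\Delta, x) \ = \ \sum_{F \in \partial\Delta} x^{|F|}(1-x)^{n-|F|} \ = \ (1-x)\, h(\partial\Delta, x),
\end{equation*}
where the extra factor $(1-x)$ compensates for the dimension shift from parameter $n$ (for $\Delta$) to parameter $n-1$ (for $\partial\Delta$). Rearranging produces exactly the identity in the previous display, completing the proof.

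The main, and essentially only, obstacle is careful bookkeeping of the parameter shifts between $\Delta$, $\partial\Delta$, and $\Sigma$: a mis-tracked power of $x$ or $(1-x)$ would silently derail the calculation. Conceptually nothing lies beyond Dehn--Sommerville for spheres together with the (standard) topological fact that the boundary of a PL ball is a PL sphere, so coning over it yields a sphere.
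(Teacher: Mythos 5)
Your proof is correct. Note that the paper itself gives no argument for this proposition; it simply cites it as a special case of Lemma~6.2 in Stanley's 1987 paper, so there is no in-text proof to compare against. Your cone-and-glue derivation is a clean, self-contained route: form $\Sigma = \Delta \cup (v \ast \partial\Delta)$, observe that it triangulates a sphere, and then play Dehn--Sommerville for $\Sigma$ and for $\partial\Delta$ against the decompositions $h(\Sigma,x) = h(\Delta,x) + x\,h(\partial\Delta,x)$ and $h(\Delta,x) - h^\circ(\Delta,x) = (1-x)\,h(\partial\Delta,x)$. The bookkeeping is right at every step, and the two inputs you need -- that the boundary complex of a triangulated ball triangulates a sphere, and that Dehn--Sommerville ($x^d h(\Gamma,1/x) = h(\Gamma,x)$) holds for all triangulated $(d-1)$-spheres, not just polytope boundaries -- are both standard. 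Stanley's original lemma is stated and proved in the more general setting of locally Cohen--Macaulay complexes via the Reisner/local cohomology machinery, whereas your argument is purely enumerative; for the special case of balls your approach is arguably more elementary and transparent, at the cost of invoking the topological fact about $\partial\Delta$ and the Klee form of Dehn--Sommerville as black boxes.
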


\subsection{Triangulations}
\label{sec:triang}

Let $\Delta$ and $\Delta'$ be simplicial complexes. 
We say that $\Delta'$ is a \emph{triangulation} of 
$\Delta$ if there exist geometric realizations $K'$ 
and $K$ of $\Delta'$ and $\Delta$, respectively, such 
that $K'$ geometrically subdivides $K$. Let $L \in K$ 
be a simplex and $F$ be the corresponding face of 
$\Delta$. Then, $K'$ restricts to a triangulation 
$K'_L$ of $L$. The subcomplex $\Delta'_F$ of $\Delta'$ 
which corresponds to $K'_L$ is a triangulation of the 
abstract simplex $2^F$, called the \emph{restriction} 
of $\Delta'$ to $F$. The \emph{carrier} of a face 
$G \in \Delta'$ is the smallest face $F \in \Delta$ 
such that $G \in \Delta'_F$. 

A fundamental enumerative invariant of a 
triangulation of a simplex is the \emph{local 
$h$-polynomial}. Given a triangulation $\Gamma$ of an 
$(n-1)$-dimensional simplex $2^V$, this polynomial is 
defined \cite[Definition~2.1]{Sta92} by the formula 
\begin{equation} \label{eq:deflocalh}
  \ell_V (\Gamma, x) \ = \sum_{F \subseteq V} 
  \, (-1)^{n - |F|} \, h (\Gamma_F, x).
\end{equation}
By the principle of inclusion-exclusion, 
\begin{equation} \label{eq:h-localh}
  h (\Gamma, x) \ = \sum_{F \subseteq V} 
  \ell_F (\Gamma_F, x).
\end{equation}

Stanley~\cite{Sta92} showed 
that $\ell_V (\Gamma, x)$ has nonnegative and
symmetric coefficients, so that $x^n 
\ell_V (\Gamma, 1/x) = \ell_V (\Gamma, x)$, for 
every triangulation $\Gamma$ of $2^V$, and that it 
has unimodal coefficients for every \emph{regular} 
triangulation, meaning that $\Gamma$ can be 
realized as the collection of projections on a 
geometric simplex of the lower faces of a simplicial 
polytope of one dimension higher.

The \emph{barycentric subdivision} of 
a simplicial complex $\Delta$ is defined as the 
simplicial complex $\sd(\Delta)$ on the vertex set 
$\Delta \sm \{\varnothing\}$ whose faces are the chains 
$F_0 \subsetneq F_1 \subsetneq \cdots \subsetneq F_k$ 
of nonempty faces of $\Delta$. The carrier of such a 
chain is its top element $F_k$. 
To describe the $h$-polynomial and local 
$h$-polynomial of $\sd(\sigma_n)$, we need to recall
a few definitions from permutation enumeration. An 
\emph{excedance} of a permutation $w \in \fS_n$ is 
an index $i \in [n-1]$ such that $w(i) > i$. Let 
$\exc(w)$ be the number of excedances of $w$. 
The polynomial
\begin{equation*}
A_n(x) \ = \ \sum_{w \in \fS_n} x^{\exc(w)}
\end{equation*}
is called \emph{$n$th Eulerian polynomial}	$A_n(x)$; 
see \cite[Section~1.4]{StaEC1} for more information
on this important concept. Similarly, the \emph{$n$th 
derangement polynomial} is defined by the formula
\begin{equation*}
d_n(x) \ = \ \sum_{w \in \fD_n}x^{\exc(w)},
\end{equation*}
where $\fD_n$ the set of all derangements (permutations
without fixed points) in $\fS_n$. Then, $h(\sd(\sigma_n), 
x) = A_n(x)$ and $\ell_V(\sd(\sigma_n), x) = d_n(x)$ for 
every $n$ (see \cite[Section~2]{Sta92}), where $V$ is the 
vertex set of $\sigma_n$.

Let $\fF = (f_\fF(i,j))$ be a triangular array of 
nonnegative integers, defined for $0 \le i \le j$. A 
triangulation $\Delta'$ of a simplicial complex $\Delta$ 
is called \emph{$\fF$-uniform} if for every 
$(n-1)$-dimensional face $F \in \Delta$, the restriction 
$\Delta'_F$ has exactly $f_\fF(k,n)$ faces of 
dimension $k-1$ for all $0 \le k \le n$. The 
barycentric subdivision is a prototypical example of an 
$\fF$-uniform triangulation, for a suitable array $\fF$;
the antiprism triangulation is another. The class of 
$\fF$-uniform triangulations was introduced and studied 
in~\cite{Ath20+}. The $h$-polynomial and local 
$h$-polynomial of an $\fF$-uniform triangulation of 
an $(n-1)$-dimensional simplex depend only on $\fF$ and
$n$ and will be denoted by $h_\fF(\sigma_n, x)$ and 
$\ell_\fF(\sigma_n, x)$, respectively.

\subsection{Polynomials}
\label{sec:polys}

We recall some basic definitions and useful facts about 
unimodal and real-rooted polynomials. A polynomial $p(x) 
= a_0 + a_1 x + \cdots + a_n x^n \in \RR[x]$ is called
\begin{itemize}
\item[$\bullet$] 
  \emph{symmetric}, with center of symmetry $n/2$, if 
	$a_i = a_{n-i}$ for all $0 \le i \le n$,
\item[$\bullet$] 
  \emph{unimodal}, with a peak at position $k$, if $a_0 \le 
	 a_1 \le \cdots \le a_k \ge a_{k+1} \ge \cdots \ge a_n$,
\item[$\bullet$] 
  \emph{alternatingly increasing} with respect to $n$, if 
	 $a_0 \le a_n \le	a_1 \le a_{n-1} \le \cdots \le 
	 a_{\lceil n/2 \rceil}$,
\item[$\bullet$] 
  \emph{$\gamma$-positive}, with center of symmetry $n/2$,
	 if $p(x) = \sum_{j=0}^{\lfloor n/2 \rfloor} \gamma_j 
	 x^j (1+x)^{n-2j}$ for some nonnegative real numbers 
	 $\gamma_0, \gamma_1,\dots,\gamma_{\lfloor n/2 \rfloor}$.
\end{itemize}
Gamma-positivity implies palindromicity and unimodality; 
see \cite{Ath18} for a survey about this very 
interesting concept.	

A polynomial $p(x) \in \RR[x]$ is \emph{real-rooted} 
if all complex roots of $p(x)$ are real, or $p(x)$ is 
the zero polynomial. A real-rooted polynomial, with 
roots $\alpha_1 \ge \alpha_2 \ge \cdots$, is said to 
\emph{interlace} another real-rooted polynomial, with 
roots $\beta_1 \ge \beta_2 \ge \cdots$, if
\[ \cdots \le \alpha_2 \le \beta_2 \le \alpha_1 \le
   \beta_1. \]
By convention, the zero polynomial interlaces and is 
interlaced by every real-rooted polynomial and constant 
polynomials interlace all polynomials of degree at most 
one. Background on real-rooted polynomials and the theory 
of interlacing can be found in \cite{Bra15, Fi06, Sta89} 
and references therein. We recall here the crucial facts 
that every real-rooted polynomial with nonnegative 
coefficients is unimodal and that (see 
\cite[Lemma~3.4]{Fi06}) if two real-rooted polynomials 
$p(x)$ and $q(x)$ have positive leading coefficients and 
$p(x)$ interlaces $q(x)$, then $p(x) + q(x)$ is 
real-rooted as well and it is interlaced by $p(x)$ and 
interlaces $q(x)$. Moreover, every symmetric real-rooted 
polynomial with nonnegative coefficients is 
$\gamma$-positive.

A sequence $(p_0(x), p_1(x),\dots,p_m(x))$ of 
real-rooted polynomials is called \emph{interlacing} if 
$p_i(x)$ interlaces $p_j(x)$ for $0 \le i < j \le m$. 
The following lemma will be used for the proof of 
Theorem~\ref{thm:mainA} in Section~\ref{sec:enu-simplex}.
\begin{lemma} \label{lem:inter}
\begin{itemize}
\itemsep=0pt
\item[(a)]
{\rm (\cite[Lemma~2.3]{Bra06}, 
\cite[Proposition~3.3]{Wa92})}
Let $p_1(x), p_2(x),\dots,p_m(x)$ be real-rooted 
polynomials in $\RR[x]$. If $p_1(x)$ interlaces 
$p_m(x)$ and $p_i(x)$ interlaces $p_{i+1}(x)$ for 
all $i \in [m-1]$, then $(p_1(x), p_2(x),\dots,p_m(x))$ 
is an interlacing sequence.

\item[(b)]
{\rm (cf. \cite[Lemma~3.4]{Fi06})}
If $(p_1(x), p_2(x),\dots,p_m(x))$ is an interlacing 
sequence of real-rooted polynomials in $\RR[x]$ with 
positive leading coefficients, then so is $(p_1(x) + 
p_2(x) + \cdots + p_m(x),\dots,p_{m-1}(x) + p_m(x), 
p_m(x))$.

\item[(c)]
Let $(p_1(x), p_2(x),\dots,p_m(x))$ be an interlacing 
sequence of real-rooted polynomials in $\RR[x]$ with 
positive leading coefficients. Then, $p_1(x) + p_2(x) 
+ \cdots + p_m(x)$ interlaces $c_1 p_1(x) + c_2 p_2(x) 
+ \cdots + c_m p_m(x)$ for all positive real numbers 
$c_1 \le c_2 \le \cdots \le c_m$.
In particular, $p_1(x) + p_2(x) + \cdots + p_{m-1}(x)$
interlaces $p_1(x) + 2p_2(x) + \cdots + mp_m(x)$.  
\end{itemize}
\end{lemma}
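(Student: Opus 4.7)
The plan is to use Abel summation to rewrite the weighted sum $\sum_{i=1}^m c_i p_i(x)$ in terms of the tail sums $Q_j(x) := p_j(x) + p_{j+1}(x) + \cdots + p_m(x)$, and then to invoke part~(b) of the lemma. Setting $d_1 := c_1 > 0$ and $d_j := c_j - c_{j-1} \ge 0$ for $2 \le j \le m$, so that $c_i = d_1 + d_2 + \cdots + d_i$, a routine interchange of summation yields
\[
\sum_{i=1}^m c_i \, p_i(x) \ = \ \sum_{j=1}^m d_j \, Q_j(x).
\]

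By part~(b), the sequence $(Q_1(x), Q_2(x), \ldots, Q_m(x))$ is itself an interlacing sequence of real-rooted polynomials with positive leading coefficients, so in particular $Q_1(x) = p_1(x) + \cdots + p_m(x)$ interlaces each $Q_j(x)$. At this stage I would invoke the classical fact---a standard strengthening of \cite[Lemma~3.4]{Fi06}---that, for a fixed real-rooted polynomial $p(x)$ with positive leading coefficient, the set of real-rooted polynomials interlaced by $p(x)$ is closed under nonnegative linear combinations (and similarly for the set of polynomials which interlace $p(x)$). Applied with $p(x) = Q_1(x)$ and with the $Q_j(x)$ as the members (with nonnegative scalars $d_j$, not all zero since $d_1 > 0$), this yields that $Q_1(x)$ interlaces $\sum_{j=1}^m d_j Q_j(x) = \sum_{i=1}^m c_i p_i(x)$, which is the main claim.

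For the ``in particular'' assertion, the subsequence $(p_1, p_2, \ldots, p_{m-1})$ is itself interlacing, so applying the just-proved main statement with the constants $1, 2, \ldots, m-1$ gives that $p_1 + p_2 + \cdots + p_{m-1}$ interlaces $p_1 + 2 p_2 + \cdots + (m-1) p_{m-1}$. Moreover, since $p_j(x)$ interlaces $p_m(x)$ for every $j < m$ by hypothesis, the dual form of the convex-cone fact implies that $p_1 + \cdots + p_{m-1}$ interlaces $p_m(x)$, and hence also $m \, p_m(x)$. One final application of the convex-cone fact to the two-term sum then shows that $p_1 + \cdots + p_{m-1}$ interlaces $p_1 + 2 p_2 + \cdots + (m-1) p_{m-1} + m \, p_m$, as required.

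The main obstacle is really just the convex-cone fact invoked above (in both its forms), which is slightly stronger than the version of Fisk's lemma explicitly recorded in the preliminaries (the latter only gives that $p$ interlaces $p+q$ whenever $p$ interlaces $q$). This strengthening is standard and well-documented: it can be established by a direct sign analysis at the roots of the common interlacer, or simply quoted from \cite{Fi06} or any standard reference on the theory of real-rooted polynomials. Once this is granted, the remainder of the proof is pure bookkeeping via Abel summation and an appeal to part~(b).
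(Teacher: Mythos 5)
Your proof is correct but follows a genuinely different route from the paper's. The paper proceeds by induction on $m$: it collapses $p_{m-1}$ and $p_m$ into the single polynomial $p_{m-1}+p_m$, applies the induction hypothesis to the resulting interlacing sequence of length $m-1$ with weights $c_1,\dots,c_{m-1}$, then adds back the correction term $(c_m - c_{m-1})p_m$. You instead perform Abel summation to rewrite $\sum_i c_i p_i$ as a nonnegative combination $\sum_j d_j Q_j$ of the tail sums $Q_j = p_j + \cdots + p_m$, observe via part~(b) that $(Q_1,\dots,Q_m)$ is interlacing, and conclude. Your approach is cleaner in that it makes the role of part~(b) explicit and avoids induction; the paper's induction is perhaps more elementary in spirit but obscures the connection to part~(b). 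Both proofs rely on the same underlying ``convex-cone'' fact — that if $f$ interlaces $g$ and $f$ interlaces $h$ (all with positive leading coefficients), then $f$ interlaces $ag+bh$ for $a,b\ge 0$ — which is strictly stronger than the $p$-interlaces-$(p+q)$ statement recorded in the preliminaries. You are right to flag this: the paper uses the same strengthened fact implicitly (both when it asserts that $s_m$ interlaces $(c_m-c_{m-1})p_m$ ``because each of its summands does so'' and when it adds the two interlaced polynomials at the end of the induction step), so the dependence is shared, not a defect of your argument. Your treatment of the ``in particular'' clause matches the paper's almost verbatim. One small point worth making explicit for completeness: your cone argument needs $Q_1$ to interlace $Q_1$ itself, which holds under the weak (non-strict) definition of interlacing used here, and the combination $\sum_j d_j Q_j$ is nonzero since $d_1 = c_1 > 0$.
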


\begin{proof} 
We only need to prove part (c) and for that, we proceed 
by induction on $m$. The case $m=1$ being trivial, let 
us assume that the result holds for a positive integer
$m-1$, consider a sequence $(p_1(x), 
p_2(x),\dots,p_m(x))$ and positive real numbers $c_1 \le c_2 
\le \cdots \le c_m$ as in the statement of the lemma 
and set $s_m(x) := p_1(x) + p_2(x) + \cdots + p_m(x)$. 
Since the sequence $(p_1(x),\dots,p_{m-2}(x),p_{m-1} + 
p_m(x))$ is also interlacing \cite[Lemma~3.4]{Fi06}, the 
induction hypothesis implies that $s_m(x)$ interlaces 
$c_1 p_1(x) + \cdots + c_{m-2} p_{m-2}(x) + c_{m-1} 
(p_{m-1}(x) + p_m(x))$. Since $s_m(x)$ also interlaces 
$(c_m - c_{m-1})p_m(x)$ (because each of its summands 
does so), it must interlace the sum of these two 
polynomials. This completes the induction.

For the second statement, let $s_{m-1}(x) := p_1(x) + 
p_2(x) + \cdots + p_{m-1}(x)$. From the first statement 
we have that $s_{m-1}(x)$ interlaces $p_1(x) + 2p_2(x) 
+ \cdots + (m-1)p_{m-1}(x)$. Since $s_{m-1}(x)$ also 
interlaces $mp_m(x)$, it must interlace the sum of 
these two polynomials and the proof follows.
\end{proof}

Every polynomial $p(x) \in \RR[x]$ of degree at most
$n$ can be written uniquely in the form $p(x) = a(x) + 
xb(x)$, where $a(x)$ and $b(x)$ are symmetric with 
centers of symmetry $n/2$ and $(n-1)/2$, respectively. 
We say that $p(x)$ has a \emph{nonnegative symmetric 
decomposition} with respect to $n$, if $a(x)$ and 
$b(x)$ have nonnegative coefficients. Following 
\cite{BS20}, we also say that $p(x)$ has a
\emph{real-rooted symmetric decomposition} 
(respectively, \emph{real-rooted and interlacing 
symmetric decomposition}) with respect to $n$, if 
$a(x)$ and $b(x)$ are real-rooted (respectively, if 
$a(x)$ and $b(x)$ are real-rooted and $x^n p(1/x)$ 
interlaces $p(x)$). By \cite[Theorem~2.6]{BS20}, if
$p(x)$ has a nonnegative, real-rooted and interlacing 
symmetric decomposition with respect to $n$, then 
$b(x)$ interlaces $a(x)$ and each one of them interlaces 
$p(x)$. The alternatingly increasing property for $p(x)$, 
defined earlier, with respect to $n$ is equivalent to 
the unimodality of both $a(x)$ and $b(x)$.

\section{The antiprism construction}
\label{sec:aconstr}

The antiprism triangulation of a simplicial complex
can be defined geometrically by iterating the 
antiprism construction. This section reviews the 
latter and studies its face enumeration, in the 
framework of uniform triangulations \cite{Ath20+}. 
The results will be applied in 
Section~\ref{sec:enumerate}, but may be of 
independent interest too.

Let $V = \{v_1, v_2,\dots,v_n\}$ be an $n$-element 
set and $\Delta$ be a triangulation of the boundary
complex of the simplex $2^V$. We pick an $n$-element 
set $U = \{u_1, u_2,\dots,u_n\}$ which is disjoint 
from the vertex set of $\Delta$ and denote by 
$\Gamma_\aA(\Delta)$ the collection of faces of
$\Delta$ together with all sets of the form $E \cup 
G$, where $E = \{ u_i~:~ i \in I\}$ is a nonempty face 
of the simplex $2^U$ for some $\varnothing \subsetneq
I \subseteq [n]$ and $G$ is a face of the restriction 
of $\Delta$ to the face $F = \{ v_i~:~ i \in [n] \sm I\}$ 
of $\partial (2^V)$ which is complementary to $E$. 
The collection $\Gamma_\aA(\Delta)$ is a simplicial 
complex which contains $2^U$ and $\Delta$ as 
subcomplexes; we call it the \emph{antiprism over 
$\Delta$}. When $\Delta = \partial(2^V)$ is the 
trivial triangulation, the antiprism $\Gamma_\aA
(\partial(2^V))$ is combinatorially isomorphic to 
the Schlegel diagram \cite[Section~5.2]{Zie95} of 
the $n$-dimensional cross-polytope behind any of its
facets. For general $\Delta$, the antiprism 
$\Gamma_\aA(\Delta)$ is a triangulation of 
$\Gamma_\aA(\partial(2^V))$: the carrier of a face 
$E \cup G$, as above, is the union of $E$ with the 
carrier of $G$, the latter considered as a face of 
the triangulation $\Delta$ of $\partial(2^V)$. Since 
$\Gamma_\aA(\partial(2^V))$ triangulates the simplex 
$2^V$, $\Gamma_\aA(\Delta)$ is a triangulation of 
$2^V$ as well with boundary complex equal to $\Delta$. 

\begin{remark} \label{rem:DeltaA} \rm
Given a triangulation $\Gamma$ of the 
$(n-1)$-dimensional simplex $2^V$, an analogous 
procedure defines a triangulation, say $\Delta_\aA
(\Gamma)$, of the $(n-1)$-dimensional sphere which 
contains $2^U$ and $\Gamma$ as subcomplexes
and which we may call the \emph{antiprism over 
$\Gamma$}. This construction was employed in 
\cite[Section~4]{Ath12}, in order to relate the 
$\gamma$-vector of a flag triangulation of the 
sphere to the local $\gamma$-vector of a flag 
triangulation of the simplex, and 
in~\cite[Section~4]{Ath20}, in order to interpret 
geometrically binomial Eulerian polynomials (see 
Example~\ref{ex:binom-euler}) and 
certain analogues for $r$-colored permutations. 
The connection between the two constructions is 
that $\Delta_\aA(\Gamma) = \Gamma \cup \Gamma_\aA
(\partial \Gamma)$. 
\qed
\end{remark}

The following statement is closely related to 
\cite[Proposition~4.1]{Ath20}.
\begin{proposition} \label{prop:A(Delta)} 
The simplicial complex $\Gamma_\aA(\Delta)$ 
triangulates the $(n-1)$-dimensional simplex $2^V$ 
for every triangulation $\Delta$ of the boundary 
complex $\partial(2^V)$. Moreover,
\[ h(\Gamma_\aA(\Delta), x) \ = \ \sum_{F \subsetneq 
   V} x^{|F|} h(\Delta_F, 1/x). \]
\end{proposition}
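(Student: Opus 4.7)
The triangulation part is essentially sketched in the paragraph preceding the proposition, so the plan is to make that argument formal. $\Gamma_\aA(\partial(2^V))$ is known to triangulate $2^V$, with maximal cells exactly the joins $E \ast F_E$, where $E$ is a nonempty face of $2^U$ and $F_E := \{v_i \colon u_i \notin E\}$ is the complementary boundary face of $\partial(2^V)$. Replacing each such $E \ast F_E$ by the finer join $E \ast \Delta_{F_E}$ yields a triangulation of $2^V$ whose faces outside $\Delta$ are exactly those listed in the definition of $\Gamma_\aA(\Delta)$; the refinements are compatible on intersections because $\Delta_F$ restricts to $\Delta_{F'}$ for every $F' \subseteq F$, and the boundary is $\Delta$ because $U$ lies in the interior.

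For the $h$-polynomial identity, the strategy is to unfold the definition, regroup faces by their trace on $U$, and then invoke Proposition \ref{prop:h-ballcirc}. Every face $H \in \Gamma_\aA(\Delta)$ decomposes uniquely as $H = E \cup G$ with $E = H \cap U$ and $G = H \cap V \in \Delta_{F_E}$, using the convention $\Delta_V := \Delta$ when $E = \varnothing$. Summing $x^{|H|}(1-x)^{n-|H|}$ and reindexing $E$ by $F = F_E$, so that $|E| = n - |F|$, gives
\begin{equation*}
h(\Gamma_\aA(\Delta), x) \ = \ \sum_{F \subseteq V} x^{n - |F|} \sum_{G \in \Delta_F} x^{|G|} (1-x)^{|F| - |G|}.
\end{equation*}

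Next I decompose each $\Delta_F$ by carriers, $\Delta_F = \bigsqcup_{F' \subseteq F,\, F' \subsetneq V} \Delta_{F'}^\circ$, and exchange the order of summation. For fixed $F' \subsetneq V$ and fixed $G \in \Delta_{F'}^\circ$, the inner sum over $F \supseteq F'$ in $V$ collapses by the binomial theorem:
\begin{equation*}
\sum_{F \colon F' \subseteq F \subseteq V} x^{n-|F|} (1-x)^{|F| - |G|} \ = \ (1-x)^{|F'| - |G|} \bigl(x + (1-x)\bigr)^{n - |F'|} \ = \ (1-x)^{|F'| - |G|}.
\end{equation*}
This reduces the expression to $\sum_{F' \subsetneq V} h^\circ(\Delta_{F'}, x)$, and the claimed identity follows upon applying Proposition \ref{prop:h-ballcirc} to each triangulation $\Delta_{F'}$ of the $(|F'|-1)$-ball $2^{F'}$. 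The only delicate point is that the term $F = V$ (equivalently $E = \varnothing$) collects the faces of $\Delta$ itself, and since $\Delta$ has dimension $n-2$ rather than $n-1$, the inner sum for $F = V$ is not a standard $h$-polynomial; however, the carrier decomposition absorbs this uniformly because $\Delta = \bigsqcup_{F' \subsetneq V} \Delta_{F'}^\circ$ as well. Everything else is bookkeeping.
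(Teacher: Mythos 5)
Your proof is correct, and it takes a genuinely different route to the identity. The paper applies Proposition~\ref{prop:h-ballcirc} once, to the whole ball $\Gamma_\aA(\Delta)$: it computes $h^\circ(\Gamma_\aA(\Delta),x)$ by observing that the interior faces are exactly those meeting $U$, partitions them by $E = G \cap U \neq \varnothing$, and substitutes $x \mapsto 1/x$ at the end. You instead expand $h(\Gamma_\aA(\Delta),x)$ directly, group by $H \cap U$ (including the boundary case $E = \varnothing$), then re-decompose each restriction by carriers, $\Delta_F = \bigsqcup_{F' \subseteq F} \Delta^\circ_{F'}$, collapse the sum over $F$ with the binomial theorem, and only then apply Proposition~\ref{prop:h-ballcirc} — but now to the many small balls $\Delta_{F'}$ rather than to $\Gamma_\aA(\Delta)$. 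Both arguments are sound; the paper's is shorter and more conceptual, while yours avoids having to identify the interior of $\Gamma_\aA(\Delta)$ and instead leans on the carrier decomposition of $\Delta$ itself, which you correctly observe handles the $F=V$ term uniformly even though $\Delta$ is not a ball. Your approach is also closer in spirit to the locality-formula manipulations used later in the paper (Proposition~\ref{prop:A(uDelta)}), so it is a reasonable alternative.
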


\begin{proof} 
We have already commented on the first sentence.
For the second, using 
Proposition~\ref{prop:h-ballcirc} and the 
definition of the $h$-polynomial we find that
\begin{eqnarray*}
x^n h(\Gamma_\aA(\Delta), 1/x) & = & 
h^\circ(\Gamma_\aA(\Delta), x) \ = \ 
\sum_{G \in \Gamma_\aA(\Delta)^\circ} x^{|G|} 
(1-x)^{n-|G|} \\ & = & 
\sum_{\varnothing \ne E \subseteq U} 
\sum_{\scriptsize \begin{array}{c} G \in 
\Gamma_\aA(\Delta),
\\ G \cap U = E \end{array}} x^{|G|} (1-x)^{n-|G|}.
\end{eqnarray*}
By definition of $\Gamma_\aA(\Delta)$, the inner 
sum is equal to $x^{|E|} h(\Delta_F, x)$, where 
$F \subsetneq V$ is the face of $2^V$ which is 
complementary to $E$. Replacing $x$ by $1/x$ results 
in the proposed expression for $h(\Gamma_\aA(\Delta), 
x)$ and the proof follows.
\end{proof}

We now turn our attention to uniform triangulations
of $\partial (2^V)$.
\begin{proposition} \label{prop:A(uDelta)} 
For every $\fF$-uniform triangulation $\Delta$ of 
the boundary complex of an $(n-1)$-dimensional 
simplex $2^V$:

\begin{eqnarray}
h(\Gamma_\aA(\Delta), x) & = & \sum_{k=0}^{n-1} 
   {n \choose k} x^k h_\fF(\sigma_k, 1/x) 
	          \label{eq:h-A(Delta)a} \\
& = & \sum_{k=0}^{n-1} {n \choose k} 
	  \ell_\fF(\sigma_k, x) \left( (1+x)^{n-k} - x^{n-k} 
		\right) \label{eq:h-A(Delta)b}, \\
\ell_V(\Gamma_\aA(\Delta), x) & = & \sum_{k=0}^{n-1} 
   {n \choose k} \ell_\fF(\sigma_k, x) 
	 \left( (1+x)^{n-k} - 1 - x^{n-k} \right)		
	         \label{eq:ell-A(Delta)}, \\
h(\Gamma_\aA(\Delta), x) - h(\Delta, x) 
& = & \sum_{k=0}^{n-1} {n \choose k} \ell_\fF
  (\sigma_k, x) \left( (1+x)^{n-k} - 1 - x - 
	\cdots - x^{n-k} \right) \label{eq:h-hbd-A(Delta)}		
\\ & = & \sum_{k=0}^{n-1} 
   {n \choose k} h_\fF(\sigma_k, x) 
	 \left( x^{n-k} - x (x-1)^{n-k-1} \right)	.
	         \label{eq:h-hbd-h-A(Delta)}
\end{eqnarray}

\medskip
In particular, if all restrictions of $\Delta$ to 
proper faces of $2^V$ are regular triangulations,
then the polynomials $\ell_V(\Gamma_\aA(\Delta), x)$ and  
$h(\Gamma_\aA(\Delta), x) - h(\Delta, x)$ are 
unimodal and $h(\Gamma_\aA(\Delta), x)$ is alternatingly 
increasing with respect to $n-1$.
\end{proposition}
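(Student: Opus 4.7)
The first formula is immediate from Proposition~\ref{prop:A(Delta)} and the $\fF$-uniformity hypothesis, by grouping the proper subsets $F \subsetneq V$ according to $|F| = k$. For the remaining four formulas I would prepare a short list of auxiliary identities: the expansion $h_\fF(\sigma_k, x) = \sum_{j=0}^k \binom{k}{j}\ell_\fF(\sigma_j, x)$ from (\ref{eq:h-localh}) and its M\"obius inverse $\ell_\fF(\sigma_k, x) = \sum_{j=0}^k (-1)^{k-j}\binom{k}{j}h_\fF(\sigma_j, x)$; Stanley's symmetry $x^j\ell_\fF(\sigma_j, 1/x) = \ell_\fF(\sigma_j, x)$; the binomial identity $\binom{n}{k}\binom{k}{j} = \binom{n}{j}\binom{n-j}{k-j}$; and the ``sphere analogue'' of (\ref{eq:h-localh}),
\[ h(\Delta, x) \; = \; \sum_{k=0}^{n-1}\binom{n}{k}\ell_\fF(\sigma_k, x)\,(1 + x + \cdots + x^{n-k-1}), \]
which comes from the general identity $h(\Delta, x) = \sum_{F \in \partial\sigma_n} \ell_F(\Delta_F, x)\,h(\link_{\partial\sigma_n}(F), x)$ for triangulations of a sphere, together with $\link_{\partial\sigma_n}(F) = \partial\sigma_{n-|F|}$ and $h(\partial\sigma_m, x) = 1 + x + \cdots + x^{m-1}$.

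With these in hand, formula (\ref{eq:h-A(Delta)b}) follows from (\ref{eq:h-A(Delta)a}) by substitution, reindexing, and a Vandermonde collapse; (\ref{eq:ell-A(Delta)}) follows by computing $\ell_V(\Gamma_\aA(\Delta), x)$ from its definition (\ref{eq:deflocalh}), using the fact that $(\Gamma_\aA(\Delta))_F = \Delta_F$ for every proper $F \subsetneq V$, and subtracting from (\ref{eq:h-A(Delta)b}) the resulting alternating binomial sum (which collapses to $-\sum_{k=0}^{n-1}\binom{n}{k}\ell_\fF(\sigma_k, x)$); (\ref{eq:h-hbd-A(Delta)}) is then the difference of (\ref{eq:h-A(Delta)b}) and the sphere formula for $h(\Delta, x)$ above; and (\ref{eq:h-hbd-h-A(Delta)}) is obtained by substituting the M\"obius inverse into (\ref{eq:h-hbd-A(Delta)}) and collapsing the resulting double sum via $\sum_{m=0}^{M-1}(-1)^m\binom{M}{m}(1+x)^{M-m} = x^M - (x-1)^M$ (after accounting for the missing $m = M$ term), plus the analogous simplification for the geometric series. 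The most involved piece of bookkeeping is (\ref{eq:h-hbd-h-A(Delta)}); everything else is routine.

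For the ``in particular'' claim, Stanley's theorem applied to the regular restrictions gives that each $\ell_\fF(\sigma_k, x)$ is nonnegative, symmetric about $k/2$, and unimodal. The polynomials $(1+x)^{n-k} - 1 - x^{n-k}$ and $(1+x)^{n-k} - 1 - x - \cdots - x^{n-k}$ are nonnegative, symmetric about $(n-k)/2$, and unimodal (their coefficient sequences are, respectively, $\binom{n-k}{i}$ with the endpoints killed, and $\binom{n-k}{i} - 1$). Since a product of two nonnegative symmetric unimodal polynomials is symmetric and unimodal with centers adding, every summand in (\ref{eq:ell-A(Delta)}) and (\ref{eq:h-hbd-A(Delta)}) is symmetric about $n/2$ and unimodal, and so are the sums $\ell_V(\Gamma_\aA(\Delta), x)$ and $h(\Gamma_\aA(\Delta), x) - h(\Delta, x)$.

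For the alternatingly increasing property, I would write $h(\Gamma_\aA(\Delta), x) = h(\Delta, x) + r(x)$ with $r(x) := h(\Gamma_\aA(\Delta), x) - h(\Delta, x)$. By Dehn--Sommerville for the $(n-2)$-sphere $\Delta$, the polynomial $h(\Delta, x)$ is symmetric about $(n-1)/2$, and the sphere formula for $h(\Delta, x)$ together with the same product argument (now with centers $k/2$ and $(n-k-1)/2$ summing to $(n-1)/2$) shows it is also nonnegative and unimodal. Inspection of (\ref{eq:h-A(Delta)b}) gives $\deg h(\Gamma_\aA(\Delta), x) \le n-1$, so $r(x)$ has degree $\le n-1$ and is symmetric about $n/2$; these two constraints force $r_0 = r_n = 0$, whence $r(x) = x\,s(x)$ with $s(x)$ symmetric about $(n-2)/2$, nonnegative, and unimodal. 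The decomposition $h(\Gamma_\aA(\Delta), x) = h(\Delta, x) + x\,s(x)$ is then the symmetric decomposition of $h(\Gamma_\aA(\Delta), x)$ with respect to $n-1$, with both parts nonnegative and unimodal, which by Section~\ref{sec:polys} is exactly the alternatingly increasing property with respect to $n-1$.
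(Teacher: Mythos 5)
Your proposal is correct and follows essentially the same route as the paper's proof: (\ref{eq:h-A(Delta)a}) from Proposition~\ref{prop:A(Delta)}, (\ref{eq:h-A(Delta)b}) by substituting the local-$h$ expansion and collapsing the double sum, (\ref{eq:ell-A(Delta)}) via the inclusion--exclusion identity (your direct computation from (\ref{eq:deflocalh}) is equivalent to the paper's Equation~(\ref{eq:h-ell-sigma})), (\ref{eq:h-hbd-A(Delta)}) by subtracting Stanley's locality formula for the sphere $\Delta$, and (\ref{eq:h-hbd-h-A(Delta)}) by M\"obius inversion and the same change-of-summation collapse. The ``in particular'' argument via products of symmetric unimodal nonnegative polynomials and identifying the symmetric decomposition with respect to $n-1$ is also the paper's argument, which you have made slightly more explicit.
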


\begin{proof} 
Equation~(\ref{eq:h-A(Delta)a}) follows directly 
from Proposition~\ref{prop:A(Delta)}. To deduce 
Equation~(\ref{eq:h-A(Delta)b}) from that, we use 
(\ref{eq:h-localh}) to express $h_\fF(\sigma_k, 1/x)$ 
in terms of local $h$-polynomials, apply the symmetry 
property of the latter and change the order of
summation to obtain 
\begin{eqnarray*}
h(\Gamma_\aA(\Delta), x) & = & \sum_{k=0}^{n-1} 
   {n \choose k} x^k \sum_{j=0}^k {k \choose j} 
	 \ell_\fF(\sigma_j, 1/x) \ = \ 
	\sum_{k=0}^{n-1} {n \choose k} \sum_{j=0}^k
	x^{k-j} {k \choose j} \ell_\fF(\sigma_j, x) \\
& = & \sum_{j=0}^{n-1} \ell_\fF(\sigma_j, x) 
  \sum_{k=j}^{n-1} {n \choose k} {k \choose j} x^{k-j} 
\ = \ \sum_{j=0}^{n-1} {n \choose j} \ell_\fF(\sigma_j, x)
  \sum_{k=j}^{n-1} {n-j \choose n-k} x^{k-j} \\ & = & 
	\sum_{j=0}^{n-1} {n \choose j} 
	  \ell_\fF(\sigma_j, x) \left( (1+x)^{n-j} - x^{n-j} 
		\right).
\end{eqnarray*}
For the fourth and fifth step we have used the identity 
${n \choose k} {k \choose j} = {n \choose j} {n-j 
\choose n-k}$ and the binomial theorem, respectively. 

Alternatively,
Equation~(\ref{eq:h-A(Delta)b}) follows from an 
application of Stanley's locality formula 
\cite[Theorem~3.2]{Sta92} to $\Gamma_\aA(\Delta)$, 
considered as a triangulation of the antiprism 
$\Gamma_\aA(\partial (2^V))$ over the boundary 
complex of $2^V$. Equation~(\ref{eq:ell-A(Delta)}) 
follows when combining (\ref{eq:h-A(Delta)b}) with
\begin{equation} \label{eq:h-ell-sigma}
  h(\Gamma_\aA(\Delta), x) \ = \ 
	\ell_V(\Gamma_\aA(\Delta), x) 
	\, + \, \sum_{k=0}^{n-1} {n \choose k}
  \ell_\fF(\sigma_k, x), 
\end{equation}
the latter being (\ref{eq:h-localh}) applied to 
$\Gamma_\aA(\Delta)$. Equation~(\ref{eq:h-hbd-A(Delta)}) 
follows from~(\ref{eq:h-A(Delta)b}) and 
\begin{equation} \label{eq:h-bdsigma}
 h(\Delta, x) \ = \ \sum_{k=0}^{n-1} {n \choose k} 
   \ell_\fF(\sigma_k, x) 
   (1 + x + x^2 + \cdots + x^{n-k-1}), 
\end{equation}
which is also a consequence of \cite[Theorem~3.2]
{Sta92}; see \cite[Equation~(4.2)]{KMS19}. 
Equation~(\ref{eq:h-hbd-h-A(Delta)}) follows 
from~(\ref{eq:h-hbd-A(Delta)}) by expressing $\ell_\fF
(\sigma_k, x)$ in terms of the $h$-polynomials 
$h_\fF(\sigma_j, x)$, changing the order of 
summation and computing the inner sum, just as 
in the proof of Equation~(\ref{eq:h-A(Delta)b}); we
leave the details of this computation to the 
interested reader.

For the last statement we
note that, by the regularity assumption, $\ell_\fF
(\sigma_k, x)$ is (symmetric with center of symmetry 
$k/2$ and) unimodal for $0 \le k < n$. As a 
result, Equations~(\ref{eq:ell-A(Delta)}) 
and~(\ref{eq:h-hbd-A(Delta)}) imply the unimodality 
of $\ell_V(\Gamma_\aA(\Delta), x)$ and 
$h(\Gamma_\aA(\Delta), x) - h(\Delta, x)$, 
respectively, and Equations~(\ref{eq:h-hbd-A(Delta)}) 
and~(\ref{eq:h-bdsigma}) imply that the symmetric 
decomposition 
\[ h(\Gamma_\aA(\Delta), x) \ = \ h(\Delta, x) + 
   (h(\Gamma_\aA(\Delta), x) - h(\Delta, x)) \]
of $h(\Gamma_\aA(\Delta), x)$ with respect to $n-1$ 
is nonnegative and unimodal. The latter statement is 
	equivalent to $h(\Gamma_\aA(\Delta),x)$ being alternatingly
	increasing.
\end{proof}

\begin{remark} \label{rem:c(Delta)} \rm
Let $\Delta$ be as in 
Proposition~\ref{prop:A(uDelta)}. Since coning 
a simplicial complex does not affect the 
$h$-polynomial, the right-hand side 
of~(\ref{eq:h-bdsigma}) is also an expression for 
$h(u \ast\Delta, x)$, where $u \ast \Delta$ denotes 
the cone of $\Delta$ with apex $u$. The formula
\begin{equation} \label{eq:h-bdsigma-new}
  h(u \ast\Delta, x) \ = \ \sum_{k=0}^{n-1} 
  {n \choose k} h_\fF(\sigma_k, x) (x-1)^{n-k-1} 
\end{equation}
can be derived from that by expressing $\ell_\fF
(\sigma_k, x)$ in terms of the $h$-polynomials 
$h_\fF(\sigma_j, x)$, changing the order of 
summation and computing the inner sum, just as 
in the proof of Equations~\eqref{eq:h-A(Delta)b} 
and \eqref{eq:h-hbd-h-A(Delta)}
or, alternatively, by adapting the argument in 
the proof of Proposition~\ref{prop:A(Delta)}.
When $\Delta$ is the barycentric subdivision of
$\partial \sigma_n$, this yields the recursion 
\[ A_n(x) \ = \ \sum_{k=0}^{n-1} {n \choose k} 
   A_k(x) (x-1)^{n-k-1} \]
for the Eulerian polynomial $A_n(x)$, valid for 
$n \ge 1$. This appears as Equation~(2.7) in \cite{Fo08}.
\end{remark}

\begin{example} \label{ex:binom-euler} \rm
Suppose again that $\Delta$ is the barycentric 
subdivision of $\partial \sigma_n$. Then, 
Equation~(\ref{eq:h-A(Delta)a}) yields that
\[ h(\Gamma_\aA(\Delta), x) \ = \ \sum_{k=0}^{n-1} 
   {n \choose k} x^k A_k(1/x) \ = \ 1 \, + \, 
	 x \sum_{k=1}^{n-1} {n \choose k} A_k(x) 
	 \ = \ \widetilde{A}_n(x) - xA_n(x) \]
and $h(\Gamma_\aA(\Delta), x) - h_\fF(\partial \sigma_n)
= \widetilde{A}_n(x) - (1+x)A_n(x)$, where 
\[ \widetilde{A}_n(x) \ := \ 1 \, + \, x 
   \sum_{k=1}^n {n \choose k} A_k(x) \]
is the $n$th binomial Eulerian polynomial studied, 
for instance, in \cite{Ath20, SW20}. From 
Equation~(\ref{eq:h-ell-sigma}) we compute 
further that $\ell_V(\Gamma_\aA(\Delta), x) = 
\widetilde{A}_n(x) - (1+x)A_n(x) - d_n(x)$, 
where $d_n(x) = \ell_\fF(\sigma_n, x)$ is the 
$n$th derangement polynomial (see 
Section~\ref{sec:triang}). 

Therefore, by Proposition~\ref{prop:A(uDelta)}, 
$\widetilde{A}_n(x) - xA_n(x)$ is alternatingly 
increasing with respect to $n-1$ and 
$\widetilde{A}_n(x) - (1+x)A_n(x)$ is symmetric 
and unimodal. 
\qed
\end{example}

\section{The antiprism triangulation}
\label{sec:atriang}

This section briefly describes combinatorially 
and geometrically the antiprism triangulation of a 
simplicial complex. For more information we refer
to \cite[Apendix~A.1]{IJ03} and 
\cite{Ko12}, where these descriptions are given in 
variant forms. We first review the corresponding 
descriptions of the barycentric subdivision, which we 
will parallel to treat the antiprism triangulation.

Let $\Delta$ be a simplicial 
complex. Consider the (simple, undirected) graph 
$\gG(\Delta)$ on the node set of nonempty faces 
of $\Delta$ for which two nodes are adjacent if one
is contained in the other. The barycentric subdivision 
$\sd(\Delta)$ is defined as the \emph{clique complex}
of $\gG(\Delta)$, meaning the abstract simplicial 
complex whose vertices are the nodes of $\gG(\Delta)$
and whose faces are the sets consisting of pairwise 
adjacent nodes. This is equivalent to the 
definition already given in Section~\ref{sec:triang}.

Geometrically, $\sd(\Delta)$
can be described as a triangulation of $\Delta$ as 
follows. Assume that all faces of $\Delta$ of 
dimension at most $j$ have been triangulated, for 
some $j \in \NN$. Then, triangulate each 
$(j+1)$-dimensional face of $\Delta$ by inserting 
one point in the interior of that face and coning 
over its boundary, which is already triangulated. 
By repeating this process, starting at $j=0$ and   
moving to higher dimensional faces, we get a 
triangulation of $\Delta$ which is 
combinatorially isomorphic to $\sd(\Delta)$. 
Alternatively, $\sd(\Delta)$ can be constructed
by applying successively the operation of stellar 
subdivision to each face of $\Delta$ of positive 
dimension, starting from the facets and moving to 
lower dimensional faces in any order which respects
reverse inclusion. A \emph{stellar subdivision} on 
a face $F \in \Delta$ replaces $\Star_\Delta(F)$ by 
the join of $\link_\Delta(F)$ with the cone over 
$\partial(2^F)$. 

The antiprism triangulation can be defined 
similarly, if the nonempty faces of $\Delta$ are 
replaced by pointed faces and coning is replaced by 
the antiprism construction of 
Section~\ref{sec:aconstr}. Recall that a 
\emph{pointed subset} of a set $V$ is any pair 
$(S, v)$ such that $v \in S \subseteq V$. Similarly, 
a \emph{pointed face} of a simplicial complex $\Delta$ 
is any pair $(F,v)$ such that $F \in \Delta$ is a 
face and $v \in F$ is a chosen vertex. 
\begin{definition} \label{def:sdA-complex} 
Let $\Delta$ be a simplicial 
complex. We denote by $\gG_\aA(\Delta)$ the (simple,
undirected) graph on the node set of pointed faces 
of $\Delta$ for which two distinct pointed faces 
$(F,v)$ and $(F',v')$ are adjacent if 

\begin{itemize}
\itemsep=0pt
\item[$\bullet$]
$F = F'$, or

\item[$\bullet$]
$F \subsetneq F'$ and $v' \in (F' \sm F)$, or 

\item[$\bullet$]
$F' \subsetneq F$ and $v \in (F \sm F')$.
\end{itemize}
The \emph{antiprism triangulation} of $\Delta$, denoted 
by $\sd_\aA(\Delta)$, is the abstract simplicial 
complex defined as the clique complex of $\gG_\aA
(\Delta)$. 
\end{definition}
Examples of antiprism triangulations are shown in 
Figures~\ref{fig:2simplex} and~\ref{fig:Bsp2}. 

\begin{figure}
\centering
  \includegraphics[width=0.7\textwidth]{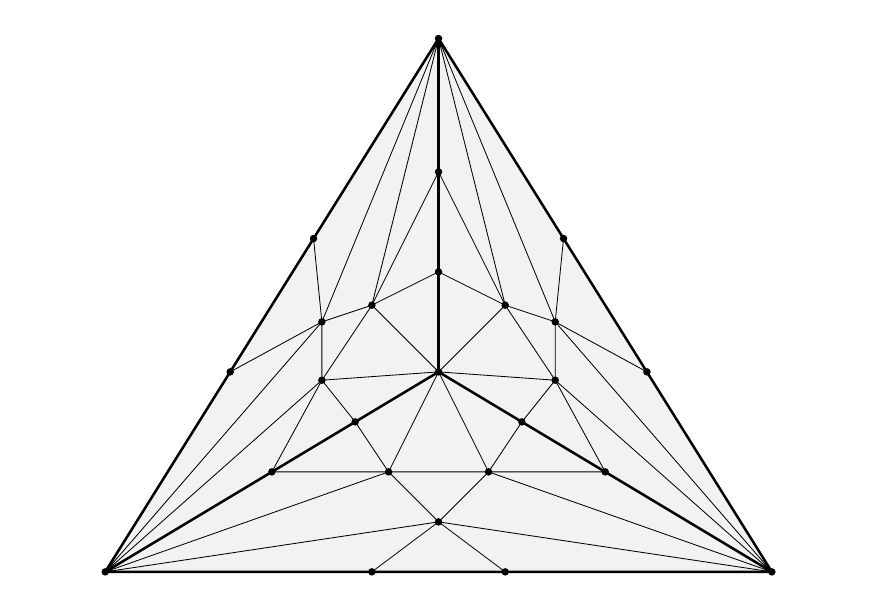}
  \caption{Antiprism triangulation of the cone 
	over the boundary of the 2-simplex}
  \label{fig:Bsp2}
\end{figure}

The faces of $\sd_\aA(\Delta)$ can be described 
explicitly, in combinatorial terms 
\cite[Section~2]{Ko12}. Given a set $S$, an 
\emph{ordered set partition} (or simply, ordered 
partition) of $S$ is any sequence of nonempty, 
pairwise disjoint sets (called \emph{blocks}) whose 
union is equal to $S$. A \emph{multi-pointed 
ordered partition} of $S$ is defined as a pair 
$(\pi, \tau)$, where $\pi = (B_1, B_2,\dots,B_m)$ 
and $\tau = (C_1, C_2,\dots,C_m)$ are ordered 
partitions of $S$ and of a subset of $S$, 
respectively, with the same number of blocks, 
such that $C_i$ is a nonempty subset of $B_i$ for 
every $i \in [m]$. We think of such a pair as an
ordered partition of $S$, together with a choice 
of a nonempty subset for every block. The sum of the 
cardinalities of these subsets $C_i$ (total number
of chosen elements) will be called the \emph{weight}
of $(\pi, \tau)$. Then, the $(k-1)$-dimensional 
faces of $\sd_\aA(\Delta)$ are in one-to-one 
correspondence with the multi-pointed ordered 
partitions of faces of $\Delta$ of weight $k$. 
More specifically, the multi-pointed ordered 
partition $(\pi, \tau)$, with $\pi = (B_1, 
B_2,\dots,B_m)$ and $\tau = (C_1, C_2,\dots,C_m)$,
corresponds to the face of $\sd_\aA(\Delta)$ with
vertices the pointed faces $(F, v)$ of $\Delta$, 
where $F = B_1 \cup B_2 \cup \cdots \cup B_i$ for
some $i \in [m]$ and $v \in C_i$. 
The faces of the antiprism triangulation of the simplex 
$2^V$ are the multi-pointed ordered partitions of
subsets of $V$; they will be referred to as 
\emph{multi-pointed partial ordered partitions} of 
$V$. Note that the facets of $\sd_\aA(\Delta)$ are 
in one-to-one correspondence with the ordered 
partitions of the facets of $\Delta$ (since all 
elements in the blocks should be chosen).
Figure \ref{fig:2simplex} shows the antiprism 
triangulation of the 2-simplex, including some faces 
labeled by multi-pointed ordered partitions.

As was the case with barycentric subdivision, 
$\sd_\aA(\Delta)$ can be constructed geometrically 
by applying the antiprism construction of 
Section~\ref{sec:aconstr} to its faces, starting 
from the edges and moving to faces of higher 
dimension in any order which respects inclusion. 
This process is slightly different from the one 
in \cite{IJ03, Ko12} which uses crossing operations 
on the faces of $\Delta$ instead, starting from 
facets and moving to faces of lower dimension in 
any order which respects reverse inclusion. A 
\emph{crossing operation} (also known as a 
\emph{balanced stellar subdivision} \cite{BM87}) 
on a face $F \in \Delta$ replaces $\Star_\Delta(F)$ 
by the join of $\link_\Delta(F)$ with the antiprism 
(as defined in Section~\ref{sec:aconstr}) over 
$\partial(2^F)$. Both approaches result in a 
triangulation which is combinatorially isomorphic 
to $\sd_\aA(\Delta)$. Under this isomorphism, 
the carrier of a multi-pointed ordered partition 
of a face $F \in \Delta$ is equal to $F$. As a 
result, the interior faces of the antiprism 
triangulation of the simplex $2^F$ are in 
one-to-one correspondence with the multi-pointed
ordered partitions of $F$. A type of operation 
more general than stellar and balanced stellar 
subdivision was introduced in~\cite{HN16} and
was applied there to all faces of a fixed 
dimension to produce a triangulation of $\Delta$.

\section{Face enumeration} 
\label{sec:enumerate}

This section studies the rich enumerative 
combinatorics of antiprism triangulations and 
proves Theorem~\ref{thm:mainA}. Following the 
notation of \cite{Ath20+}, we denote by 
$h_\aA(\sigma_n, x)$ and $\ell_\aA(\sigma_n, x)$
the $h$-polynomial and local $h$-polynomial of
$\sd_\aA(\sigma_n)$, respectively. These two
polynomials play an important role in this study.
The main difficulty for proving the real-rootedness
of $h_\aA(\sigma_n, x)$ comes from the fact that 
we know of no simpler recurrence relation for it 
than that of \Cref{prop:h-sdA(simplex)}. Some 
of the combinatorial interpretations of $h_\aA
(\sigma_n,x)$ extend to describe the effect of 
the antiprism triangulation on the $h$-polynomial 
of any simplicial complex.

\subsection{The antiprism triangulation of a 
simplex} 
\label{sec:enu-simplex}

As discussed in Section~\ref{sec:atriang}, the 
number of $(k-1)$-dimensional faces of the 
antiprism triangulation $\sd_\aA(\sigma_n)$ is 
equal to the number of multi-pointed partial 
ordered set partitions of $[n]$ of weight $k$. 
We now give a recurrence and combinatorial 
interpretations for the $h$-polynomial of 
$\sd_\aA(\sigma_n)$. For the first few values 
of $n$, 
 
\[  h_\aA(\sigma_n, x) \ = \ \begin{cases}
    1, & \text{if $n = 0$} \\
		1, & \text{if $n = 1$} \\
    1 + 2x, & \text{if $n=2$} \\
    1 + 9x + 3x^2, & \text{if $n=3$} \\
    1 + 28x + 42x^2 + 4x^3, & \text{if $n=4$} \\
    1 + 75x + 310x^2 + 150x^3 + 5x^4, & 
                            \text{if $n=5$} \\
    1 + 186x + 1725x^2 + 2300x^3 + 465x^4 + 6x^5, 
                           & \text{if $n=6$} \\
		1 + 441x + 8211x^2 + 23625x^3 + 13685x^4 + 
		    1323x^5 + 7x^6, & \text{if $n=7$.}
    \end{cases}  \]
		
We first need to introduce some more terminology.
Let $\varphi = (\pi, \tau)$ be a multi-pointed 
partial ordered set partition of $[n]$. Thus, $\pi 
= (B_1, B_2,\dots,B_m)$ is an ordered partition of 
a subset $S$ of $[n]$ and $\tau = (C_1, 
C_2,\dots,C_m)$, where $C_i$ is a nonempty subset
of $B_i$ for every $i \in [m]$. We will say that
$\varphi$ is \emph{proper} if $C_i$ is a proper 
subset of $B_i$ for every $i \in [m]$. We will 
use the same terminology with the adjective 
`partial' dropped, when $S = [n]$. The 
\emph{excedance set} of a permutation $w \in \fS_n$
is defined as the set of indices $i \in [n-1]$ such
that $w(i) > i$; see \cite{ES00} for more 
information on this concept.
\begin{proposition} \label{prop:h-sdA(simplex)} 
\begin{itemize}
\itemsep=0pt
\item[{\rm (a)}]
We have 
\begin{equation} \label{eq:hA(simplex)-rec}
h_\aA(\sigma_n, x) \ = \ \sum_{k=0}^{n-1} 
{n \choose k} x^k h_\aA(\sigma_k, 1/x)
\end{equation}
for every positive integer $n$.

\item[{\rm (b)}]
The coefficient of $x^k$ in $h_\aA(\sigma_n, x)$ 
is equal to:
\begin{itemize}
\itemsep=0pt
\item[$\bullet$]
the number of proper multi-pointed partial ordered 
set partitions of $[n]$ of weight $k$,

\item[$\bullet$]
the number of ways to choose a subset $S \subseteq 
[n]$ and an ordered set partition $\pi$ of $S$ and 
to color $k$ elements of $S$ black and the remaining 
elements white, so that no block of $\pi$ is 
monochromatic,

\item[$\bullet$]
the number of ordered set partitions $\pi = 
(B_1, B_2,\dots,B_m)$ of $[n]$ for which the 
union $\bigcup_{i=1}^{\lfloor m/2 \rfloor} B_i$
has exactly $k$ elements,

\item[$\bullet$]
${n \choose k}$ times the number of permutations
in $\fS_n$ with excedance set equal to $[k]$, 

\item[$\bullet$]
the explicit expression $$ {n \choose k} 
\sum_{j=1}^{k+1} \, (-1)^{k+1-j} j! S(k+1,j) 
j^{n-k-1}, $$
\end{itemize}
where $S(n,k)$ are the Stirling numbers of the 
second kind.
\end{itemize}
\end{proposition}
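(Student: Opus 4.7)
Part (a) will follow from Proposition~\ref{prop:A(Delta)} applied to $\Delta := \sd_\aA(\partial\sigma_n)$. By the geometric description of the antiprism triangulation in Section~\ref{sec:atriang}, we have $\sd_\aA(\sigma_n) = \Gamma_\aA(\sd_\aA(\partial\sigma_n))$, and the restriction of $\sd_\aA(\partial\sigma_n)$ to each proper face $F$ of $2^V$ is $\sd_\aA(2^F)$; substituting these identifications into the formula of Proposition~\ref{prop:A(Delta)} produces exactly \eqref{eq:hA(simplex)-rec}.

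For part (b), Equation~\eqref{eq:hdef}, combined with the combinatorial description of the faces of $\sd_\aA(\sigma_n)$ given in Section~\ref{sec:atriang}, yields
\[
h_\aA(\sigma_n, x) \;=\; \sum_{\varphi} x^{\mathrm{wt}(\varphi)} (1-x)^{n - \mathrm{wt}(\varphi)},
\]
where $\varphi = (\pi, \tau)$ with $\pi = (B_1, \ldots, B_m)$ and $\tau = (C_1, \ldots, C_m)$ ranges over multi-pointed partial ordered partitions of $[n]$ and $\mathrm{wt}(\varphi) := \sum_i |C_i|$. To prove interpretation~(1), I would compare the exponential generating functions of the two sides: splitting each object into its unused elements, its ordered partition blocks, and the choice of $C_i$ per block (nonempty on the $h$-polynomial side; nonempty and proper on the other side), a standard symbolic computation shows that each EGF equals $e^{(1-x)z}/(1 + e^{(1-x)z} - e^z)$, after multiplying numerator and denominator of the proper-case EGF by $e^{xz}$. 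Interpretation~(2) is a cosmetic reformulation of (1) in which $C_i$ plays the role of the black, and $B_i \setminus C_i$ the role of the white, elements of block $B_i$.

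Interpretation~(3) follows from an explicit bijection with~(2). Given a bichromatically coloured ordered partition $(B_1, \ldots, B_m)$ of $S \subseteq [n]$ with black parts $C_1, \ldots, C_m$, white parts $D_1, \ldots, D_m$, and $R := [n] \setminus S$, I send it to the ordered partition
\[
(C_1, C_2, \ldots, C_m,\; R,\; D_m, D_{m-1}, \ldots, D_1)
\]
of $[n]$, omitting the $R$-block when $R = \varnothing$. This output has $m' \in \{2m, 2m+1\}$ blocks whose first $\lfloor m'/2 \rfloor = m$ blocks have union $C_1 \cup \cdots \cup C_m$ of size $k$, and the inverse pairs the first $m$ blocks against the reversed last $m$, extracting any middle block as $R$.

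Interpretations~(4) and~(5) form the technical core. The factor $\binom{n}{k}$ in~(4) accounts for the choice of the size-$k$ subset $L \subseteq [n]$ forming the union of the initial blocks in~(3); what remains is to biject pairs $(\pi_L, \pi_R)$ of ordered partitions of $L$ and $[n]\setminus L$, with $\pi_R$ having either the same number of blocks as $\pi_L$ or one more, with permutations of $[n]$ whose excedance set is exactly $[k]$. I intend to construct this bijection via a canonical cycle decomposition of such a permutation, using the excedance constraint to separate the cycle data at positions $[k]$ from that at positions $\{k+1, \ldots, n\}$. Interpretation~(5) will then follow from~(4) by M\"obius inversion over the subsets of $[n-1]$ containing $[k]$, after expressing the superset counts $\#\{w : w(i) > i \text{ for all } i \in S\}$ in terms of surjections and arbitrary functions; this produces the summand $j!\,S(k+1, j)\,j^{n-k-1}$ together with the alternating sign $(-1)^{k+1-j}$. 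The principal obstacle I anticipate is the bijection underlying~(4), since the excedance-set statistic (unlike the descent-set statistic) does not admit a clean multiplicative decomposition; as a backup, one may instead verify that the explicit formula of~(5) satisfies the recurrence of part~(a) with matching initial values, which then forces~(4) upon division by $\binom{n}{k}$.
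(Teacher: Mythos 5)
Your proof of part~(a) is essentially the paper's: the recurrence is Equation~\eqref{eq:h-A(Delta)a} of Proposition~\ref{prop:A(uDelta)} specialized to the antiprism triangulation, which in turn rests on Proposition~\ref{prop:A(Delta)}; unwinding that down to the antiprism construction directly, as you do, is fine. For interpretations~(1)--(3) your arguments differ somewhat from the paper's but are sound: the paper proves~(1) inductively from the recurrence~\eqref{eq:ellA-rec} for $\ell_\aA(\sigma_n,x)$ rather than by an exponential generating function identity, and its bijection for~(3) lists the unchosen parts in the order $B_1\setminus C_1,\ldots,B_r\setminus C_r$ with $[n]\setminus S$ placed at the end, rather than reversed with $[n]\setminus S$ in the middle; both versions work because the parity of the number of blocks records whether $[n]\setminus S$ is empty.

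The genuine gap is in interpretations~(4) and~(5). You set up the problem correctly---after extracting $\binom{n}{k}$, one must show that the number of ordered set partitions of $[n]$ as in~(3) whose first-half union is a fixed $k$-set equals the number of permutations in $\fS_n$ with excedance set $[k]$---but you leave the bijection unconstructed and flag it yourself as the principal obstacle, and your backup plan (checking that the explicit formula in~(5) satisfies the recurrence of part~(a)) is likewise not carried out. The paper avoids the bijection entirely. Writing $p_\aA(n,k)=\binom{n}{k}\bar p_\aA(n,k)$, it deduces from part~(a) the normalized recurrence $\bar p_\aA(n,k)=\sum_{m=0}^{n-k-1}\binom{n-k}{m}\bar p_\aA(k+m,m)$, observes (using the symmetry $c(n,k)=c(n,n-k-1)$ from~\cite{ES00}) that the count $c(n,k)$ of permutations with excedance set $[k]$ obeys exactly the same recurrence with the same initial data, and concludes $\bar p_\aA(n,k)=c(n,k)$ by induction; interpretation~(5) is then imported directly from \cite[Proposition~6.5]{ES00} rather than rederived. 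To close the gap you would need either to supply the excedance bijection you sketch or to switch to this recurrence-matching argument.
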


\begin{proof} 
Part (a) follows from 
Proposition~\ref{prop:A(uDelta)}, as a special case 
of Equation~(\ref{eq:h-A(Delta)a}). For part (b), 
we first note that from Equation \eqref{eq:ell-A(Delta)} 
of the same proposition and 
Equation~(\ref{eq:h-localh}) we get 
\[ \ell_\aA(\sigma_n, x) \ = \ \sum_{m=0}^{n-1} 
   {n \choose m} \ell_\aA(\sigma_m, x) 
	 \left( (1+x)^{n-m} - 1 - x^{n-m} \right)	\]
for $n \ge 1$ and 
\[ h_\aA(\sigma_n, x) \ = \ \sum_{m=0}^n
   {n \choose m} \ell_\aA(\sigma_m, x),	\]
respectively. By induction on $n$, the former 
equality implies that the coefficient of $x^k$ in 
$\ell_\aA(\sigma_n, x)$ is equal to the number of 
proper multi-pointed ordered set partitions of 
$[n]$ of weight $k$. This and the latter equation
yield the first interpretation of $h_\aA(\sigma_n, 
x)$ claimed in part (b). The second interpretation
is a restatement of the first (where black elements
correspond to the chosen elements in the blocks of
the multi-pointed partition).

The third interpretation can be deduced from the 
first as follows. Let $Q(n,k)$ denote the collection 
of proper multi-pointed partial ordered partitions 
of $[n]$ of weight $k$. Each element of $Q(n,k)$ 
is a triple consisting of a subset $S \subseteq [n]$, 
an ordered partition $\pi = (B_1, B_2,\dots,B_r)$ 
of $S$ and a choice of nonempty proper subset $C_i$ 
of $B_i$ for every $i \in [r]$,
such that the union $\cup_{i=1}^r C_i$ has 
cardinality $k$. From such a triple one can define 
an ordered partition of $[n]$ by listing the blocks 
$C_1,\dots,C_r, B_1 \sm C_1,\dots,B_r \sm C_r$ in 
this order and, if nonempty, adding $[n] \sm S$
at the end as the last block. It is straightforward 
to verify that the resulting map is a bijection 
from $Q(n,k)$ to the collection of ordered partitions 
of $[n]$ described in the third proposed 
interpretation.

For the last two claimed interpretations, let us 
denote by $c(n,k)$ the number of permutations in 
$\fS_n$ with excedance set equal to $[k]$, 
for $k \in \{0, 1,\dots,n\}$. Then, $c(n,n) = 0$
and, as a consequence of Lemma~2.2 and Theorem~2.5 
in \cite{ES00} (see also Section~3 of this 
reference), $c(n,k) = c(n,n-k-1)$ and 
\[ c(n, k) \ = \ 1 \, + \sum_{m=1}^k 
   {k+1 \choose m} c(n-k-1+m, m) \]
for $k \in \{0, 1,\dots,n-1\}$. In view of $c(n,k) 
= c(n,n-k-1)$, the latter equality can be 
rewritten as 
\begin{equation} \label{eq:c(n,k)-rec}
c(n, k) \ = \ 1 \, + \sum_{m=1}^{n-k-1} {n-k 
\choose m} c(k+m, m).
\end{equation}
On the other hand, writing $h_\aA(\sigma_n, x) = 
\sum_{k=0}^n p_\aA(n,k) x^k$ for $n \in \NN$, the 
recursion of part (a) gives that
\[ p_\aA(n, k) \ = \ \sum_{m=k}^{n-1} {n \choose m} 
   p_\aA(m, m-k) \]
for $k \in \{0, 1,\dots,n-1\}$. Setting $$ p_\aA(n,k) 
\ = \ {n \choose k} \bar{p}_\aA(n,k), $$ the last 
recursion can be rewritten as 
\begin{eqnarray*} 
{n \choose k} \bar{p}_\aA(n,k) & = & 
\sum_{m=k}^{n-1} {n \choose m} {m \choose k} 
\bar{p}_\aA(m, m-k) \ \ \text{i.e.,}\\
\bar{p}_\aA(n,k) & = & \sum_{m=k}^{n-1} 
{n-k \choose m-k} \bar{p}_\aA(m, m-k) \ = \
\sum_{m=0}^{n-k-1} {n-k \choose m} \bar{p}_\aA(k+m, m).
\end{eqnarray*}
Comparing this recursion to~(\ref{eq:c(n,k)-rec})
we get that $\bar{p}_\aA(n, k) = c(n,k)$ for all $n$ 
and all $0\leq k\leq n$.
This proves the next to last interpretation, claimed
in part (b). The last interpretation follows from 
this and the explicit formula for $c(n, k)$ obtained
in \cite[Proposition~6.5]{ES00}.
\end{proof}

The following statement is the main result of this 
section.
\begin{theorem} \label{thm:h-sdA(simplex)-roots}
The polynomial $h_\aA(\sigma_n, x)$ is real-rooted  
and interlaces $h_\aA(\sigma_{n+1}, x)$ for every 
$n \in \NN$. Moreover, $h_\aA(\sigma_n, x)$ has a
nonnegative, real-rooted and interlacing symmetric 
decomposition with respect to $n-1$, for every 
positive integer $n$.
\end{theorem}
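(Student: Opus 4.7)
The plan is to prove all three assertions simultaneously by induction on $n$, driven by the recurrence
\[
h_\aA(\sigma_n, x) \ = \ \sum_{k=0}^{n-1} \binom{n}{k} \, x^k \, h_\aA(\sigma_k, 1/x) \qquad (\dagger)
\]
from Proposition~\ref{prop:h-sdA(simplex)}(a). The base cases (say, $n \le 3$) are checked directly from the explicit polynomials listed above. The starting reformulation is this: if $h_\aA(\sigma_k, x) = a_k(x) + x b_k(x)$ denotes the symmetric decomposition with respect to $k-1$, then the symmetries $a_k(x) = x^{k-1} a_k(1/x)$ and $b_k(x) = x^{k-2} b_k(1/x)$ yield, for every $k \ge 1$, the identity $x^k h_\aA(\sigma_k, 1/x) = x(a_k(x)+b_k(x))$, so $(\dagger)$ rewrites as
\[
h_\aA(\sigma_n, x) \ = \ 1 \, + \, x \sum_{k=1}^{n-1} \binom{n}{k}\bigl(a_k(x) + b_k(x)\bigr). \qquad (\ddagger)
\]

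The inductive hypothesis I would propagate is stronger than the three claims of the theorem at previous stages: in addition to the real-rootedness and the nonnegative, real-rooted, interlacing symmetric decomposition of $h_\aA(\sigma_m, x)$ for all $m \le n-1$ (which, by \cite[Theorem~2.6]{BS20}, already gives $b_m$ interlacing $a_m$), I would assert that the two finite sequences $(a_1,\dots,a_{n-1})$ and $(b_1,\dots,b_{n-1})$ are themselves interlacing sequences in the sense of Section~\ref{sec:polys}. The inductive step then has three subgoals. First, recognise the right-hand side of $(\ddagger)$ as a nonnegative combination of members of an interlacing family and conclude real-rootedness of $h_\aA(\sigma_n, x)$ via Lemma~\ref{lem:inter}(b). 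Second, read off the symmetric decomposition $h_\aA(\sigma_n, x) = A_n(x) + xB_n(x)$ from $(\ddagger)$ (comparing with the dual expression obtained from $(\dagger)$ for $x^{n-1} h_\aA(\sigma_n, 1/x)$), and verify real-rootedness of $A_n, B_n$ and the interlacing $B_n \preceq A_n$ by applying the same machinery to these new polynomials. Third, establish $h_\aA(\sigma_n, x) \preceq h_\aA(\sigma_{n+1}, x)$ by comparing $(\dagger)$ at two consecutive values of $n$, whose expansions share all but one summand, and invoking Lemma~\ref{lem:inter}(a) or~(c).

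The main obstacle is that the binomial weights $\binom{n}{k}$ appearing in $(\ddagger)$ are unimodal rather than monotone in $k$, so the weighted statement Lemma~\ref{lem:inter}(c) does not apply directly. I would circumvent this either by splitting the sum at the peak $k = \lfloor n/2 \rfloor$ and pairing each index $k$ with its mirror $n-k$, so that each half carries monotone weights and Lemma~\ref{lem:inter}(c) applies, with compatibility across halves coming from the symmetry $\binom{n}{k} = \binom{n}{n-k}$ together with the inductive interlacing among the $a_k$'s and $b_k$'s; or, alternatively, by deriving from $(\dagger)$ a differential-type identity relating $h_\aA(\sigma_{n+1}, x)$ directly to $h_\aA(\sigma_n, x)$, in the spirit of the classical $A_{n+1}(x) = (1+nx) A_n(x) + x(1-x) A_n'(x)$ for Eulerian polynomials, and invoking a standard real-rootedness closure. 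Once real-rootedness and the refined interlacing relations are in place, the symmetric decomposition and the interlacing with $h_\aA(\sigma_{n+1}, x)$ follow essentially formally, completing the induction.
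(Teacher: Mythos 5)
Your plan shares the high-level strategy with the paper's proof (induction driven by the recurrence $(\dagger)$, exploiting the reformulation $x^k h_\aA(\sigma_k,1/x)=x(a_k(x)+b_k(x))$, and appealing to the interlacing machinery of Lemma~\ref{lem:inter}), and you have correctly identified the central obstacle: the binomial weights in $(\ddagger)$ are unimodal rather than monotone, so Lemma~\ref{lem:inter}(c) does not apply directly. However, neither of your two proposed workarounds closes the gap, and this is precisely where the paper's proof introduces the device you are missing.

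The second workaround — a differential identity of the type $A_{n+1}(x)=(1+nx)A_n(x)+x(1-x)A_n'(x)$ — is explicitly ruled out: the paper remarks that ``we know of no simpler recurrence relation for $h_\aA(\sigma_n,x)$ than that of Proposition~\ref{prop:h-sdA(simplex)},'' which is exactly $(\dagger)$. The first workaround (splitting the sum at $k=\lfloor n/2\rfloor$ and pairing mirrored indices) is left as a sketch and would need real work. The difficulty is not the real-rootedness of $\sum_k\binom{n}{k}c_k$ — for an interlacing family with positive leading coefficients, every nonnegative combination is real-rooted — but rather the \emph{propagation} of the strengthened hypothesis: you would have to show that $a_{n-1}$ interlaces $a_n$, that $b_{n-1}$ interlaces $b_n$, and that $h_\aA(\sigma_n,x)$ interlaces $h_\aA(\sigma_{n+1},x)$, and the symmetric-unimodal weights do not by themselves produce any of these finer interlacing conclusions via a direct application of Lemma~\ref{lem:inter}(a)--(c). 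Also note that $(\dagger)$ at $n$ and $n+1$ do not ``share all but one summand'': the summands $x^kh_\aA(\sigma_k,1/x)$ are the same but carry different weights $\binom{n}{k}$ versus $\binom{n+1}{k}$, so the comparison in your third subgoal is not of the form that Lemma~\ref{lem:inter}(a) or (c) handles.

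What the paper does instead is to sidestep the binomial-weights issue entirely by introducing the two-parameter family
\[
q_{n,r}(x)\ :=\ \sum_{k=0}^{n}\binom{n}{k}\,x^{k+r}\,h_\aA(\sigma_{k+r},1/x),
\]
noting that $q_{n,0}(x)=h_\aA(\sigma_n,x)+x^nh_\aA(\sigma_n,1/x)$, $q_{0,r}(x)=x^rh_\aA(\sigma_r,1/x)$, and crucially $q_{n,r}(x)=q_{n-1,r}(x)+q_{n-1,r+1}(x)$. With this Pascal recurrence, the ``anti-diagonal'' sequence $\qQ_n=(q_{n,0},q_{n-1,1},\dots,q_{0,n},q_{0,n+1})$ can be shown to be interlacing by induction, because its first $n+1$ entries are exactly the partial sums of the reverse of $\qQ_{n-1}$ — which is the situation Lemma~\ref{lem:inter}(b) is tailored for — and the two remaining interlacing checks follow from $q_{n,0}+q_{n-1,1}+\cdots+q_{0,n}=h_\aA(\sigma_{n+1},x)$ via Lemma~\ref{lem:inter}(a) and (c). All the assertions of the theorem then fall out by reading off the appropriate entries and sums of $\qQ_n$ and $\qQ_{n-1}$. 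In short: you have the right skeleton, but without something like the $q_{n,r}$ reorganization (or an equally concrete substitute for your ``split at the peak'' idea), the inductive step as you have outlined it does not go through.
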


\begin{proof} 
We consider the polynomials 
\[ q_{n,r} (x) \ := \ \sum_{k=0}^n {n \choose k} 
   x^{k+r} h_\aA(\sigma_{k+r}, 1/x), \]
shown in Table~\ref{tab:qnr} for small
values of $n, r \in \NN$. By part (a) of
Proposition~\ref{prop:h-sdA(simplex)} and the
definition of $q_{n,r}(x)$ we have 
\begin{eqnarray} 
q_{n,0} (x) & = & h_\aA(\sigma_n, x) + 
         x^n h_\aA(\sigma_n, 1/x), \label{eq:qn0} \\
q_{0,r} (x) & = & x^r h_\aA(\sigma_r, 1/x) 
                                   \label{eq:q0n} 
\end{eqnarray}
for every positive integer $n$ and every $r \in \NN$,
respectively. We claim that 
\[ \qQ_n \ := \ (q_{n,0}(x), 
   q_{n-1,1}(x),\dots,q_{1,n-1}(x), q_{0,n}(x), 
	 q_{0,n+1}(x)) \]
is an interlacing sequence of real-rooted polynomials
for every $n \in \NN$. In particular, selecting the 
first and last two terms, we have the interlacing 
sequence 
\[ (h_\aA(\sigma_n, x) + x^n h_\aA(\sigma_n, 1/x), 
    x^n h_\aA(\sigma_n, 1/x), x^{n+1} 
		h_\aA(\sigma_{n+1}, 1/x)) \]
of real-rooted polynomials for every $n \in \NN$. 
Before we prove the claim let us observe that, 
since $x^n h_\aA(\sigma_n, 1/x)$ and $x^{n+1} 
h_\aA(\sigma_{n+1}, 1/x)$ have degrees $n$ and 
$n+1$, respectively, the 
statement that the former polynomial interlaces
the latter is equivalent to the statement that 
$h_\aA(\sigma_n, x)$ interlaces $h_\aA(\sigma_{n+1}, 
x)$. Similarly, since $h_\aA(\sigma_n, x) + x^n 
h_\aA(\sigma_n, 1/x)$ is symmetric of degree $n$,
the statement that this polynomial interlaces
$x^{n+1} h_\aA(\sigma_{n+1}, 1/x)$ is equivalent 
to each of the statements that the same polynomial 
is interlaced by $x^n h_\aA(\sigma_{n+1}, 1/x)$ 
and that it interlaces $h_\aA(\sigma_{n+1}, x)$.

We now prove the claim by induction on $n$. This
is true for $n=0$, since $\qQ_0 = (1, x)$. We  
assume that it holds for $n-1 \in \NN$. The 
standard recurrence for the binomial coefficients
shows that $q_{n,r} (x) = q_{n-1,r} (x) + 
q_{n-1,r+1}(x)$ for every $r \in \NN$. Writing 
this in the form 
\[ q_{n-r,r} (x) \ = \ q_{n-r-1,r} (x) + 
                       q_{n-r-1,r+1} (x) \]
and iterating, we get
\begin{equation} \label{eq:q-rec}
q_{n-r,r} (x) \ = \ q_{n-r-1,r} (x) + q_{n-r-2,r+1} 
     (x) + \cdots + q_{0,n-1} (x) + q_{0,n} (x)
\end{equation}
for $r \in \{0, 1,\dots, n\}$. This means that the
first $n+1$ terms of $\qQ_n$ are the partial sums
of the reverse of $\qQ_{n-1}$ and hence they form 
an interlacing sequence, by part (b) of 
Lemma~\ref{lem:inter}. Thus, by part (a) of this 
lemma, to complete the induction it suffices 
to show that $q_{n,0} (x)$ and $q_{0,n} (x)$ 
interlace $q_{0,n+1} (x)$. As already discussed, 
and in view of~(\ref{eq:qn0}) and~(\ref{eq:q0n}),
this is equivalent to showing that 
$h_\aA(\sigma_n, x) + x^n h_\aA(\sigma_n, 1/x)$ 
and $h_\aA(\sigma_n, x)$ interlace 
$h_\aA(\sigma_{n+1}, x)$. To verify this we note 
that, setting $r=0$ in Equation~(\ref{eq:q-rec}), 
comparing with~(\ref{eq:qn0}) and~(\ref{eq:q0n}) 
and replacing $n$ with $n+1$, we get 
\begin{equation} 
q_{n,0} (x) + q_{n-1,1} (x) + \cdots + q_{0,n} (x)
\ = \ h_\aA(\sigma_{n+1}, x). \label{eq:qsum} 
\end{equation}
Since the sum of the terms of an interlacing 
sequence is interlaced by the first term, we 
conclude that $h_\aA(\sigma_n, x) + x^n 
h_\aA(\sigma_n, 1/x)$ interlaces 
$h_\aA(\sigma_{n+1}, x)$. Finally, applying
part (c) of Lemma~\ref{lem:inter} to the 
interlacing sequence $\qQ_{n-1}$ we conclude
that the sum of the first $n$ terms of this 
sequence, which equals $h_\aA(\sigma_n, x)$, 
interlaces the sum of the partial sums of the 
reverse of $\qQ_{n-1}$, which equals 
$h_\aA(\sigma_{n+1}, x)$. This completes the 
proof of the claim.

Finally, note that $x^n h_\aA(\sigma_n, 1/x)$ 
and $x^{n+1} h_\aA(\sigma_{n+1}, 1/x)$ are the 
last two terms of $\qQ_n$. Since this sequence 
is interlacing, the two polynomials are 
real-rooted and the former interlaces the 
latter. As already discussed, this means that 
$h_\aA(\sigma_n, x)$ is real-rooted and 
interlaces $h_\aA(\sigma_{n+1}, x)$. Similarly,
the sum of the first $n$ terms of the sequence
$\qQ_{n-1}$ interlaces the last term. In view 
of~(\ref{eq:q0n}) and~(\ref{eq:qsum}), this 
means that $h_\aA(\sigma_n, x)$ interlaces 
$x^n h_\aA(\sigma_n, 1/x)$ and, equivalently, 
that $h_\aA(\sigma_n, x)$ is interlaced by  
$x^{n-1} h_\aA(\sigma_n, 1/x)$. Since we already 
know from Proposition~\ref{prop:A(uDelta)} 
that $h_\aA(\sigma_n, x)$ has a nonnegative 
symmetric decomposition with respect to $n-1$, 
this decomposition must be real-rooted and 
interlacing by \cite[Theorem~2.6]{BS20}. 
\end{proof}

{\scriptsize
\begin{table}[hptb]
\begin{center}
\begin{tabular}{| l || l | l | l | l | l |} \hline
& $r=0$ & $r=1$ & $r=2$    & $r=3$ \\ \hline \hline
$n=0$   & 1    & $x$    & $2x+x^2$ & $3x+9x^2+x^3$ \\ 
         \hline
$n=1$   & $1+x$ & $3x+x^2$ & $5x+10x^2+x^3$ & 
          $7x+51x^2+29x^3+x^4$ \\ \hline
 $n=2$  & $1+4x+x^2$ & $8x+11x^2+x^3$ & 
          $12x+61x^2+30x^3+x^4$ & \\ \hline
 $n=3$  & $1+12x+12x^2+x^3$ & $20x+72x^2+31x^3+x^4$ 
        & & \\ \hline
\end{tabular}
\caption{Some polynomials $q_{n,r}(x)$.}
\label{tab:qnr}
\end{center}
\end{table}
}

Let us write $\theta_\aA(\sigma_n, x) := h_\aA
(\sigma_n, x) - h_\aA(\partial \sigma_n, x)$. 
As mentioned in the proof of 
Proposition~\ref{prop:A(uDelta)}, the expression
$h_\aA(\sigma_n, x) = h_\aA(\partial \sigma_n, x) 
+ \theta_\aA(\sigma_n, x)$ is the (nonnegative) 
symmetric decomposition of $h_\aA(\sigma_n, x)$ 
with respect
to $n-1$. Thus, $h_\aA(\partial \sigma_n, x)$ and 
$\theta_\aA(\sigma_n, x)$ are real-rooted by 
Theorem~\ref{thm:h-sdA(simplex)-roots}. Although 
the latter appears to be a very special case of
Conjecture~\ref{conj:main}, according to 
\cite[Theorem~1.2]{Ath20+}, it would imply the  
conjecture if the following statement (which we 
have verified computationally for $n \le 20$) 
also turns out to be true.
\begin{conjecture} \label{conj:thetaA}
The polynomial $h_\aA(\sigma_{n-1}, x)$ interlaces
$\theta_\aA(\sigma_n, x)$ for every positive 
integer $n$. 
\end{conjecture}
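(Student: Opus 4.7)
The plan is to extend the interlacing-sequence argument used to prove \Cref{thm:h-sdA(simplex)-roots}. The starting point is the identity
\begin{equation*}
  q_{0,n}(x) \ = \ x^n h_\aA(\sigma_n, 1/x) \ = \ x \cdot h_\aA(\partial\sigma_n, x) \, + \, \theta_\aA(\sigma_n, x),
\end{equation*}
which follows by substituting the decomposition $h_\aA(\sigma_n, x) = h_\aA(\partial\sigma_n, x) + \theta_\aA(\sigma_n, x)$ into the definition of $q_{0,n}(x)$ and using that $h_\aA(\partial \sigma_n, x)$ is symmetric with center $(n-1)/2$ while $\theta_\aA(\sigma_n, x)$ is symmetric with center $n/2$. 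Equivalently, $(1 - x)\, \theta_\aA(\sigma_n, x) = q_{0, n}(x) - x \cdot h_\aA(\sigma_n, x)$. These relations place $\theta_\aA(\sigma_n, x)$ in direct contact with the polynomials $q_{i, j}$ that drive the proof of \Cref{thm:h-sdA(simplex)-roots}.

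The goal would be to identify polynomials $p_1(x), \ldots, p_n(x)$ with positive leading coefficients such that $(p_1, \ldots, p_n)$ is an interlacing sequence, $p_1(x) + p_2(x) + \cdots + p_{n-1}(x) = h_\aA(\sigma_{n-1}, x)$, and $p_1(x) + 2 p_2(x) + \cdots + n\, p_n(x) = \theta_\aA(\sigma_n, x)$. Granting these, the ``in particular'' portion of \Cref{lem:inter}(c) would immediately yield the desired interlacing. A natural candidate for the first $n-1$ polynomials is the telescoping choice $p_1 = 1$ and $p_j = h_\aA(\sigma_j, x) - h_\aA(\sigma_{j-1}, x)$ for $2 \le j \le n-1$, with $p_n$ determined by the second requirement; this choice can be verified to work for small $n$, but the interlacing of the full sequence $(p_1, \ldots, p_n)$ is not automatic from what is currently known. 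As an alternative, one may attempt to refine the sequence $\qQ_{n-2}$ from the proof of \Cref{thm:h-sdA(simplex)-roots} by inserting an auxiliary polynomial built from the identity above, and then apply the partial-sum machinery of \Cref{lem:inter}(b,c) to reach $\theta_\aA(\sigma_n, x)$.

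The main obstacle is controlling the interlacing properties of the additional polynomial $p_n$ (or, in the refinement approach, of the inserted auxiliary term). Because $\theta_\aA(\sigma_n, x)$ is obtained from $h_\aA(\sigma_n, x)$ by subtracting $h_\aA(\partial \sigma_n, x)$, and such subtractions generally do not preserve interlacings, a careful induction on $n$ seems necessary: one would strengthen the inductive hypothesis to include the required interlacing sequence, and then propagate it to the next value of $n$ by means of the recurrence among the $q_{i,j}$'s (specifically, the partial-sum identity~\eqref{eq:q-rec}). The factor $1-x$ that appears when passing from $(1-x)\theta_\aA(\sigma_n, x) = q_{0, n}(x) - x\, h_\aA(\sigma_n, x)$ back to $\theta_\aA(\sigma_n, x)$ itself will likely require a root-location argument, since interlacings are not preserved under division by $1-x$ in general. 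The computational evidence through $n = 20$ strongly suggests that the conjecture is true, but the absence of a simple recurrence for $\theta_\aA(\sigma_n, x)$ comparable to~\eqref{eq:hA(simplex)-rec} is precisely what makes this case more delicate than the proof of \Cref{thm:h-sdA(simplex)-roots}.
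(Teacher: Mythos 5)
The statement you are attempting to prove is Conjecture~\ref{conj:thetaA}, which the paper leaves open: the authors report only that they have verified it computationally for $n \le 20$, and they observe that, together with \cite[Theorem~1.2]{Ath20+}, it would settle Conjecture~\ref{conj:main}. There is therefore no proof in the paper for your proposal to be measured against, and to your credit you present your attempt honestly as a program rather than as a finished argument.

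Your preliminary manipulations are correct. Since $h_\aA(\partial\sigma_n, x)$ is symmetric with center $(n-1)/2$ and $\theta_\aA(\sigma_n, x)$ is symmetric with center $n/2$, substituting the symmetric decomposition into $q_{0,n}(x) = x^n h_\aA(\sigma_n, 1/x)$ does give $q_{0,n}(x) = x\,h_\aA(\partial\sigma_n, x) + \theta_\aA(\sigma_n, x)$, hence $(1-x)\,\theta_\aA(\sigma_n, x) = q_{0,n}(x) - x\,h_\aA(\sigma_n, x)$. Appealing to Lemma~\ref{lem:inter}(c) through a suitably designed interlacing sequence is also the natural route. However, the genuine gap remains exactly where you place it: you must produce an interlacing sequence $(p_1,\dots,p_n)$ of real-rooted polynomials with positive leading coefficients whose partial sum $p_1+\cdots+p_{n-1}$ equals $h_\aA(\sigma_{n-1}, x)$ and whose weighted sum $\sum_{j=1}^{n} j\,p_j$ equals $\theta_\aA(\sigma_n, x)$, and nothing in the paper or in your proposal establishes any candidate with these properties. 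The telescoping choice $p_j = h_\aA(\sigma_j, x) - h_\aA(\sigma_{j-1}, x)$ is not known to be real-rooted (differences of real-rooted polynomials are generally not), and the forced $p_n$ is not visibly positive, real-rooted, or compatible with the interlacing of the earlier terms. The identity $(1-x)\theta_\aA(\sigma_n, x) = q_{0,n}(x) - x\,h_\aA(\sigma_n, x)$ does not rescue the argument either, since, as you note, interlacing is not stable under dividing out the factor $1-x$. In short, this is a reasonable research program, but it is not a proof, and the conjecture remains open.
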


\begin{remark} \label{rem} \rm
The polynomial $h_\aA(\sigma_n, x) + x^n h_\aA
(\sigma_n, 1/x)$, shown to be real-rooted in the 
proof of Theorem~\ref{thm:h-sdA(simplex)-roots},
is equal to the $h$-polynomial of a flag 
triangulation of the $(n-1)$-dimensional sphere. 
Indeed, let $\Gamma = \sd_\aA(\sigma_n)$, so that 
$h(\Gamma, x) = h_\aA(\sigma_n, x)$. Then, 
in the notation of Section~\ref{sec:aconstr}, in 
particular Remark \ref{rem:DeltaA}, 
$\Delta = \Delta_\aA(\Gamma)$ is a flag 
triangulation of the $(n-1)$-dimensional sphere
and $h(\Delta, x) = h(\Gamma, x) + h^\circ(\Gamma, x) 
= h(\Gamma, x) + x^n h(\Gamma, 1/x) = 
h_\aA(\sigma_n, x) + x^n h_\aA(\sigma_n, 1/x)$.
\end{remark}

\begin{remark} \label{rem:bar{p}(x)} \rm
The polynomial 
\[ \bar{p}_\aA(\sigma_n, x) \ := \ \sum_{k=0}^n 
   \bar{p}_\aA(n,k) x^k \ = \ \sum_{k=0}^n c(n,k)  
	 x^k, \]
where, as in the proof of \Cref{prop:h-sdA(simplex)}, 
$c(n,k)$ is the number of permutations in 
$\fS_n$ with excedance set equal to $[k]$, was 
shown to be symmetric and unimodal in 
\cite[Section~3]{ES00}. For the first few values 
of $n$,
\[  \bar{p}_\aA(\sigma_n, x) \ = \ \begin{cases}
		1, & \text{if $n = 1$} \\
    1 + x, & \text{if $n=2$} \\
    1 + 3x + x^2, & \text{if $n=3$} \\
    1 + 7x + 7x^2 + x^3, & \text{if $n=4$} \\
    1 + 15x + 31x^2 + 15x^3 + x^4, & 
                            \text{if $n=5$} \\
    1 + 31x + 115x^2 + 115x^3 + 31x^4 + x^5, 
                           & \text{if $n=6$} \\
		1 + 63x + 391x^2 + 675x^3 + 391x^4 + 63x^5 + 
		            x^6, & \text{if $n=7$.}
    \end{cases}  \]	
	
The following statement is stronger than the 
real-rootedness of $h_\aA(\sigma_n, x)$.
\begin{conjecture} \label{conj:bar{p}(x)-roots}
The polynomial $\bar{p}_\aA(\sigma_n, x)$ is 
real-rooted and interlaces $\bar{p}_\aA(\sigma_{n+1},
x)$ for every $n \in \NN$. In particular, 
$\bar{p}_\aA(\sigma_n, x)$ is $\gamma$-positive
for every $n \in \NN$.
\end{conjecture}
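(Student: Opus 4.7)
The plan is to adapt the interlacing-sequence method from the proof of Theorem~\ref{thm:h-sdA(simplex)-roots}. The essential input is the recurrence
\[ \bar{p}_\aA(n,k) \ = \ \sum_{m=0}^{n-k-1} \binom{n-k}{m} \bar{p}_\aA(k+m, m), \]
derived in the proof of Proposition~\ref{prop:h-sdA(simplex)}, which (together with the palindromicity $c(k+m, m) = c(k+m, k-1)$) expresses each coefficient of $\bar{p}_\aA(\sigma_n, x)$ as a binomial-weighted combination of coefficients of lower-indexed $\bar{p}_\aA(\sigma_j, x)$. The natural auxiliary polynomials to introduce are
\[ \tilde{q}_{n,r}(x) \ := \ \sum_{k=0}^n \binom{n}{k} \bar{p}_\aA(\sigma_{k+r}, x), \]
which by Pascal's identity satisfy $\tilde{q}_{n,r}(x) = \tilde{q}_{n-1,r}(x) + \tilde{q}_{n-1,r+1}(x)$ and $\tilde{q}_{0,r}(x) = \bar{p}_\aA(\sigma_r, x)$, and therefore, after iteration,
\[ \tilde{q}_{n-r,r}(x) \ = \ \tilde{q}_{n-r-1,r}(x) + \tilde{q}_{n-r-2,r+1}(x) + \cdots + \tilde{q}_{0,n-1}(x) + \tilde{q}_{0,n}(x). \]

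I would then attempt an induction on $n$ with the hypothesis that the sequence
\[ \tilde{\qQ}_n \ := \ (\tilde{q}_{n,0}(x), \tilde{q}_{n-1,1}(x), \ldots, \tilde{q}_{1,n-1}(x), \tilde{q}_{0,n}(x), \tilde{q}_{0,n+1}(x)) \]
is interlacing and consists of real-rooted polynomials with positive leading coefficients. The first $n+1$ entries of $\tilde{\qQ}_n$ are exactly the suffix sums of $\tilde{\qQ}_{n-1}$, in decreasing order of length, so by part (b) of Lemma~\ref{lem:inter} they form an interlacing subsequence under the induction hypothesis. Reading off the last two entries of $\tilde{\qQ}_n$, namely $\bar{p}_\aA(\sigma_n, x) = \tilde{q}_{0,n}(x)$ and $\bar{p}_\aA(\sigma_{n+1}, x) = \tilde{q}_{0,n+1}(x)$, would then yield the desired real-rootedness and interlacing statements; the $\gamma$-positivity assertion follows from palindromicity combined with real-rootedness.

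The main obstacle is closing the induction, which requires showing both that $\tilde{q}_{n,0}(x)$ interlaces $\tilde{q}_{0,n+1}(x)$ and that $\tilde{q}_{0,n}(x) = \bar{p}_\aA(\sigma_n, x)$ interlaces $\tilde{q}_{0,n+1}(x) = \bar{p}_\aA(\sigma_{n+1}, x)$. The latter is precisely the conjectured interlacing, so the approach is circular unless an independent bootstrap is identified. In the proof of Theorem~\ref{thm:h-sdA(simplex)-roots}, the bootstrap was supplied by the identity $\sum_r q_{n-r,r}(x) = h_\aA(\sigma_{n+1}, x)$, a direct consequence of Proposition~\ref{prop:h-sdA(simplex)}(a). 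In the present setting, the hockey-stick identity yields
\[ \sum_{r=0}^n \tilde{q}_{n-r,r}(x) \ = \ \sum_{j=0}^n \binom{n+1}{j} \bar{p}_\aA(\sigma_j, x), \]
which (as one can check already at $n = 2$) does not equal $\bar{p}_\aA(\sigma_{n+1}, x)$; the discrepancy originates from the coefficient $\binom{n-k}{m}$ (rather than $\binom{n}{m}$) appearing in the recurrence for $\bar{p}_\aA(n,k)$. The central task is therefore to discover the correct closing identity --- either by reweighting the $\tilde{q}_{n,r}(x)$, or by identifying a positive combination of their entries that produces $\bar{p}_\aA(\sigma_{n+1}, x)$ --- and I expect this step to be the hardest part.

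As a complementary route, one may exploit the palindromicity of $\bar{p}_\aA(\sigma_n, x)$ and pursue a $\gamma$-positivity proof by writing $\bar{p}_\aA(\sigma_n, x) = (1+x)^{n-1} Q_n\bigl(x/(1+x)^2\bigr)$ for a polynomial $Q_n(y)$ of degree $\lfloor (n-1)/2 \rfloor$. Since the map $x \mapsto x/(1+x)^2$ is a two-to-one cover of $(-\infty, 0)$ from $(-\infty,-1) \cup (-1,0)$, real-rootedness of $\bar{p}_\aA(\sigma_n, x)$ is equivalent to $Q_n(y)$ being real-rooted with all roots in $(-\infty, 0]$. A combinatorial formula for the coefficients of $Q_n$ in terms of permutations in $\fS_n$ with excedance set $[k]$ (for varying $k$), perhaps via a valley-hopping involution or a Foata-type transformation, together with an interlacing comparison between $Q_n$ and $Q_{n+1}$, would then complete the argument.
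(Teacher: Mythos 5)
The statement you are addressing is posed in the paper as an open \emph{conjecture}; the paper offers no proof, only a computational check, and the parenthetical ``in particular'' follows formally from the established symmetry of $\bar{p}_\aA(\sigma_n,x)$ once real-rootedness is known. There is therefore no proof in the paper to compare against, and the honest conclusion is that your proposal --- like the paper --- does not settle the statement.

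That said, your diagnosis of the obstruction is accurate and worth recording. The proof of Theorem~\ref{thm:h-sdA(simplex)-roots} closes its induction with the identity $\sum_{r=0}^{n} q_{n-r,r}(x) = h_\aA(\sigma_{n+1},x)$, which is nothing but the recurrence of Proposition~\ref{prop:h-sdA(simplex)}(a) after a hockey-stick collapse of the binomial weights. Your analogue $\tilde q_{n,r}(x) = \sum_{k}{n\choose k}\bar{p}_\aA(\sigma_{k+r},x)$ inherits the Pascal recursion, so the partial-sum machinery of Lemma~\ref{lem:inter}(b) still applies to the intermediate terms; but the hockey-stick sum now produces $\sum_{j}{n+1\choose j}\bar{p}_\aA(\sigma_j,x)$, which is not $\bar{p}_\aA(\sigma_{n+1},x)$. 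This is because the recursion $\bar{p}_\aA(n,k) = \sum_m {n-k\choose m}\bar{p}_\aA(k+m,m)$ carries a $k$-dependent binomial coefficient that does not uniformize the way the $h_\aA$ recursion does. Your small-$n$ numerical check is correct. You are also right that proving $\bar{p}_\aA(\sigma_n,x)$ interlaces $\bar{p}_\aA(\sigma_{n+1},x)$ is exactly what is needed to close the induction, so the scheme as written is circular absent a new identity. The alternative route via the substitution $x \mapsto x/(1+x)^2$ is the standard reduction for $\gamma$-positivity of palindromic polynomials, but it merely re-encodes the problem unless one can exhibit a recursion or interlacing relation for the transformed polynomials $Q_n$. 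In short, your attempt is not a proof, but it is a correct account of why the Section~\ref{sec:enu-simplex} technique does not transfer without a genuinely new ingredient, which is entirely consistent with the paper's decision to state this only as a conjecture.
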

\end{remark}

\subsection{The local $h$-polynomial} 
\label{sec:local}

We now focus on the local $h$-polynomial 
$\ell_\aA(\sigma_n, x)$ of the antiprism 
triangulation of $\sigma_n$. For the 
first few values of $n$,

\[  \ell_\aA(\sigma_n, x) \ = \ \begin{cases}
    1, & \text{if $n = 0$} \\
		0, & \text{if $n = 1$} \\
    2x, & \text{if $n=2$} \\
    3x + 3x^2, & \text{if $n=3$} \\
    4x + 30x^2 + 4x^3, & \text{if $n=4$} \\
    5x + 130x^2 + 130x^3 + 5x^4, & 
                            \text{if $n=5$} \\
    6x + 435x^2 + 1460x^3 + 435x^4 + 6x^5, 
                           & \text{if $n=6$} \\
		7x + 1281x^2 + 10535x^3 + 10535x^4 + 1281x^5
		   + 7x^6,            & \text{if $n=7$.}
    \end{cases}  \]

We now provide a recurrence, combinatorial 
interpretations and formulas for the polynomials 
$\ell_\aA(\sigma_n, x)$.
\begin{proposition} \label{prop:ell-sdA} 
\begin{itemize}
\itemsep=0pt
\item[{\rm (a)}]
We have 
\begin{equation} \label{eq:ellA-rec}
\ell_\aA(\sigma_n, x) \ = \ \sum_{k=0}^{n-1} 
{n \choose k} \ell_\aA(\sigma_k, x) 
	 \left( (1+x)^{n-k} - 1 - x^{n-k} \right)
\end{equation}
for every positive integer $n$. In particular, 
$\ell_\aA(\sigma_n, x)$ is unimodal for every $n 
\in \NN$.

\item[{\rm (b)}]
The coefficient of $x^k$ in $\ell_\aA(\sigma_n, x)$ 
is equal to:
\begin{itemize}
\itemsep=0pt
\item[$\bullet$]
the number of proper multi-pointed ordered set 
partitions of $[n]$ of weight $k$,

\item[$\bullet$]
the number of ways to choose an ordered set 
partition $\pi$ of $[n]$ and to color $k$ elements 
of $[n]$ black and the remaining $n-k$ white, so 
that no block of $\pi$ is monochromatic,

\item[$\bullet$]
the number of ordered set partitions $(B_1, B_2,\ldots,B_m)$
of $[n]$ having an even number of blocks for which the union
$\bigcup_{i=1}^{m/2}B_i$ has exactly $k$ elements,

\item[$\bullet$]
${n \choose k}$ times the number of derangements
in $\fS_n$ with excedance set equal to $[k]$, 

\item[$\bullet$]
the explicit expression $$ {n \choose k} 
\sum_{j \ge 1} (j!)^2 S(k,j) S(n-k,j), $$
\end{itemize}
where $S(n,k)$ are the Stirling numbers of the 
second kind.
\end{itemize}
\end{proposition}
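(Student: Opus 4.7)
The plan is to obtain part (a) directly from \Cref{prop:A(uDelta)}, and then unpack the five descriptions in part (b) by a mix of induction on $n$, a bijection, M\"obius inversion, and a surjection count.

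For part (a), I would apply equation~(\ref{eq:ell-A(Delta)}) of \Cref{prop:A(uDelta)} with $\Delta := \sd_\aA(\partial\sigma_n)$, which is an $\fF$-uniform triangulation of $\partial(2^V)$ for the array $\fF$ attached to the antiprism triangulation; this gives the recurrence~(\ref{eq:ellA-rec}) at once. Unimodality of $\ell_\aA(\sigma_n, x)$ then follows by induction on $n$: inductively $\ell_\aA(\sigma_k, x)$ is symmetric and unimodal with center of symmetry $k/2$, each factor $(1+x)^{n-k}-1-x^{n-k} = \sum_{i=1}^{n-k-1}\binom{n-k}{i}x^i$ is symmetric and unimodal with center $(n-k)/2$, so every summand in~(\ref{eq:ellA-rec}) is a product of symmetric unimodal polynomials sharing common center of symmetry $n/2$, and the total sum inherits this property.

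For part (b), I would prove the first interpretation by induction on $n$ using~(\ref{eq:ellA-rec}): a proper multi-pointed ordered set partition of $[n]$ of weight $k$ is uniquely specified by its first block $B_1$ (of size $n-k' \geq 2$), a proper nonempty subset $C_1 \subsetneq B_1$, and a recursively chosen proper multi-pointed ordered set partition of $[n]\sm B_1$; reading the summand $\binom{n}{k'}\ell_\aA(\sigma_{k'}, x)((1+x)^{n-k'}-1-x^{n-k'})$ of~(\ref{eq:ellA-rec}) as ``choose $[n]\sm B_1$ of size $k'$, choose $C_1$ proper nonempty inside $B_1$, then recurse'' matches this decomposition term by term. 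The second interpretation is a cosmetic restatement (recoding chosen as black, unchosen as white). The third interpretation follows from the bijection sending $((B_1,\ldots,B_m),(C_1,\ldots,C_m))$ to the ordered set partition $(C_1,\ldots,C_m,B_1\sm C_1,\ldots,B_m\sm C_m)$ of $[n]$, which has exactly $2m$ blocks and whose first $m$ blocks union to a $k$-element subset; this is a variant of the bijection used in the proof of \Cref{prop:h-sdA(simplex)}, simpler here because no trailing ``support complement'' block is needed.

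For the fourth interpretation I would combine $h_\aA(\sigma_n,x) = \sum_{k=0}^n \binom{n}{k}\ell_\aA(\sigma_k,x)$, which is~(\ref{eq:h-localh}) applied to $\sd_\aA(\sigma_n)$, with the fourth interpretation from \Cref{prop:h-sdA(simplex)}. M\"obius inversion on the Boolean lattice of $[n]$ gives
\begin{equation*}
[x^k]\ell_\aA(\sigma_n,x) \ = \ \binom{n}{k}\sum_{m=k}^n (-1)^{n-m}\binom{n-k}{m-k}c(m,k),
\end{equation*}
so it suffices to show $c(n,k) = \sum_{m=k}^n \binom{n-k}{m-k} d(m,k)$, where $d(m,k)$ denotes the number of derangements in $\fS_m$ with excedance set $[k]$. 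This identity is bijective: a permutation $w \in \fS_n$ with excedance set $[k]$ has all its fixed points in $\{k+1,\ldots,n\}$, and removing this fixed-point set $F$ and order-preservingly relabeling the remaining positions as $[n-|F|]$ yields a derangement with excedance set $[k]$; the key observation is that the relabeling fixes $[k]$ pointwise and preserves the order elsewhere, hence preserves the relation $w(i) > i$. Finally, the fifth interpretation follows from the second: once a coloring with $k$ blacks is fixed ($\binom{n}{k}$ options), an ordered set partition of $[n]$ into $m$ nonmonochromatic blocks bijects with a pair of surjections $[k]\twoheadrightarrow[m]$ and $[n-k]\twoheadrightarrow[m]$, counted by $(m!)^2 S(k,m) S(n-k,m)$, and summing over $m\geq 1$ completes the proof. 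The main obstacle is the fourth interpretation, specifically the careful verification that fixed-point removal preserves the excedance set $[k]$ after relabeling; once this is in place, the M\"obius inversion step is routine.
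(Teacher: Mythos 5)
Your proposal is correct and takes essentially the same route as the paper: part (a) from \Cref{prop:A(uDelta)}, the first interpretation by induction on the recurrence, the second as a restatement, the third via the block-interleaving bijection, and the fifth by counting pairs of surjections. The only variation is in the fourth interpretation, where the paper obtains $\bar{\ell}_\aA(n,k) = d(n,k)$ by comparing the parallel recurrences $\bar{p}_\aA(n,k) = \sum_i {n-k \choose i} \bar{\ell}_\aA(n-i,k)$ and $c(n,k) = \sum_i {n-k \choose i} d(n-i,k)$ and inducting, while you spell it out as binomial inversion followed by a fixed-point-removal bijection verifying that same $c$--$d$ identity, an equivalent packaging of the same argument.
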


\begin{proof} 
The recurrence of part (a) follows from 
Proposition~\ref{prop:A(uDelta)}, as a special case 
of Equation~(\ref{eq:ell-A(Delta)}). The unimodality
of $\ell_\aA(\sigma_n, x)$ follows directly from the 
recurrence by induction on $n$ (and, alternatively, 
from the regularity of the antiprism triangulation 
of the simplex; see the proof of 
\Cref{prop:aASimplexToBoundary}).

For part (b), the first interpretation was already 
shown in the proof of 
Proposition~\ref{prop:h-sdA(simplex)} and the 
second is a restatement of the first. The third 
interpretation follows from the first and the proof 
of the corresponding result of Proposition 
\ref{prop:h-sdA(simplex)} by noting that in the 
provided bijection the set $[n] \sm S$ is always 
empty. Furthermore, the fifth interpretation follows 
from the second one since there are ${n \choose k}$ 
ways to choose the $k$ black elements of $[n]$ and 
for every such choice and every $j \ge 1$, there 
are $j! S(k,j) \cdot j! S(n-k,j)$ ways to choose 
an ordered partition of $[n]$ with $j$ blocks, 
none of which is monochromatic.

Finally, we deduce the fourth interpretation from 
the corresponding result of part (b) of 
Proposition~\ref{prop:h-sdA(simplex)}. Let us  
use the notation adopted in the proof of that 
proposition, write 
$\ell_\aA(\sigma_n, x) = \sum_{k=0}^n \ell_\aA(n,k) 
x^k$ for $n \in \NN$ and set $$ \ell_\aA(n,k) \ = \ 
{n \choose k} \bar{\ell}_\aA(n,k). $$
Then, by the second interpretation, considering 
the elements $1, 2,\dots,k$ colored black and
the other elements of $[n]$ colored white, 
$\bar{\ell}_\aA(n,k)$ is equal to the number of 
ordered set partitions of $[n]$ with no 
monochromatic block. By 
Proposition~\ref{prop:h-sdA(simplex)}, under the 
same coloring convention, $\bar{p}_\aA(n,k)$ is 
equal to the number of ways
to choose a set $[k] \subseteq S \subseteq [n]$ 
and an ordered set partition of $S$ with no 
monochromatic block. These interpretations imply
that 
\[ \bar{p}_\aA(n,k) \ = \ \sum_{i=0}^{n-k} {n-k 
   \choose i} \bar{\ell}_\aA(n-i,k) \]
for all $n, k$. Denoting by $d(n,k)$ the number of 
derangements in $\fS_n$ with excedance set equal to 
$[k]$, it should also be clear that 
\[ c(n,k) \ = \ \sum_{i=0}^{n-k} {n-k \choose i} 
   d(n-i,k) \]
for all $n, k$. By 
Proposition~\ref{prop:h-sdA(simplex)}, we have 
$\bar{p}_\aA(n,k) = c(n,k)$ for all $n, k$. Therefore, 
the two expressions for these numbers above and an 
easy induction show that $\bar{\ell}_\aA(n,k) = 
d(n,k)$ for all $n, k$ and the proof follows.
\end{proof}

Following notation introduced in the previous 
proof, we set $\bar{\ell}_\aA(\sigma_n, x) := 
\sum_{k=0}^n \bar{\ell}_\aA(n,k) x^k$. We note that,
since the polynomial $\ell_\aA(\sigma_n, x)$ is 
symmetric with center of symmetry $n/2$, so is 
$\bar{\ell}_\aA(\sigma_n, x)$.
\begin{conjecture} \label{conj:ellA(x)-roots}
\begin{itemize}
\itemsep=0pt
\item[{\rm (a)}]
The polynomial $\ell_\aA(\sigma_n, x)$ is real-rooted 
and interlaces $\ell_\aA(\sigma_{n+1}, x)$ for every 
$n \in \NN$. 

\item[{\rm (b)}]
The polynomial $\bar{\ell}_\aA(\sigma_n, x)$ is 
real-rooted and interlaces $\bar{\ell}_\aA(\sigma_{n+1}, 
x)$ for every $n \in \NN$. In particular, 
$\bar{\ell}_\aA(\sigma_n, x)$ is $\gamma$-positive
for every $n \in \NN$.
\end{itemize}
\end{conjecture}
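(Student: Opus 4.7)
The plan is to extend the interlacing-sequence technique from the proof of Theorem~\ref{thm:h-sdA(simplex)-roots} to the local $h$-polynomial. Both parts of the conjecture concern polynomials that are symmetric with center of symmetry $n/2$, so once real-rootedness is established, $\gamma$-positivity is automatic; the essential task is therefore to prove real-rootedness together with consecutive interlacing.

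For part (a), I would work directly from the recurrence
\[
\ell_\aA(\sigma_n,x) \ = \ \sum_{k=0}^{n-1}\binom{n}{k}\,\ell_\aA(\sigma_k,x)\,\bigl((1+x)^{n-k}-1-x^{n-k}\bigr)
\]
of Proposition~\ref{prop:ell-sdA}(a). The plan is to introduce an auxiliary two-parameter family $t_{n,r}(x)$, analogous to the polynomials $q_{n,r}(x)$ used in the proof of Theorem~\ref{thm:h-sdA(simplex)-roots}, that absorbs the multipliers $p_{n-k}(x):=(1+x)^{n-k}-1-x^{n-k}$ in such a way that a triangular identity amenable to Lemma~\ref{lem:inter} holds. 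One would then prove by induction on $n$ that a suitably ordered tuple $(t_{n,0}(x),\, t_{n-1,1}(x),\,\dots,\, t_{0,n+1}(x))$ is an interlacing sequence, and extract the statement that $\ell_\aA(\sigma_n,x)$ interlaces $\ell_\aA(\sigma_{n+1},x)$ by comparing specific entries, exactly as in the $h$-case.

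For part (b), the relation $\ell_\aA(n,k)=\binom{n}{k}\,d(n,k)$ between the coefficients of $\ell_\aA(\sigma_n,x)$ and $\bar{\ell}_\aA(\sigma_n,x)$ means that real-rootedness does not transfer between the two polynomials by any elementary operation, so an independent argument is needed. The convolution-type expression
\[
\bar{\ell}_\aA(\sigma_n,x) \ = \ \sum_{j\geq 1}(j!)^{2}\sum_{k=j}^{\,n-j}S(k,j)\,S(n-k,j)\,x^{k}
\]
inherited from Proposition~\ref{prop:ell-sdA}(b) displays $\bar{\ell}_\aA(\sigma_n,x)$ as a weighted sum over the block count $j$ of cross-convolutions of Stirling numbers of the second kind. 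I would look for an analog of the auxiliary-sequence approach of part (a) applied to $\bar{\ell}_\aA$, with the induction parameter refining $n$ by this additional grading.

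The main obstacle is the weak analytic structure of the multipliers $p_m(x)=(1+x)^m-1-x^m$. These palindromic polynomials fail to be real-rooted in general (for instance, $p_5(x)=5x(1+x)(1+x+x^2)$ has a pair of complex roots), so the naive induction on the recurrence is blocked and no direct analog of the clean identity $q_{n,r}=q_{n-1,r}+q_{n-1,r+1}$ used in the proof of Theorem~\ref{thm:h-sdA(simplex)-roots} is immediately available. Summing the recurrence yields the closed form $\sum_{n\geq 0}\ell_\aA(\sigma_n,x)\,z^n/n!\,=\,1/(1-(e^z-1)(e^{xz}-1))$ for the exponential generating function, but extracting real-rootedness of the coefficients from this expression does not follow by any standard transfer lemma known to us. Overcoming this obstruction is, in our view, the central technical difficulty in establishing Conjecture~\ref{conj:ellA(x)-roots}.
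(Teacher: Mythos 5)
The statement you were asked to prove is labeled a \emph{Conjecture} in the paper (Conjecture~\ref{conj:ellA(x)-roots}), and the paper offers no proof of it; indeed, Section~\ref{sec:direct} lists the $\gamma$-positivity of $\ell_\aA(\sigma_n,x)$ and related polynomials among the open problems. Your proposal correctly recognizes this implicitly: you do not produce a proof, but rather a plan of attack together with an honest diagnosis of where it breaks down. There is therefore no proof in the paper to compare against, and your submission is not a proof either, so the only meaningful review is of the plausibility of your outline.

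Your outline and your diagnosis of the obstruction are both sound. The natural first attempt is indeed to mimic the $q_{n,r}$ machinery from the proof of Theorem~\ref{thm:h-sdA(simplex)-roots}, and you are right that it does not transfer directly: the $h$-recurrence in Proposition~\ref{prop:h-sdA(simplex)}(a) has the special feature that the $k$-th summand is $\binom{n}{k}$ times a single polynomial $x^k h_\aA(\sigma_k,1/x)$, so Pascal's rule on the binomial coefficient alone gives the clean telescoping identity $q_{n,r}=q_{n-1,r}+q_{n-1,r+1}$. In the $\ell$-recurrence of Proposition~\ref{prop:ell-sdA}(a) each summand carries the extra factor $(1+x)^{n-k}-1-x^{n-k}$, which depends on $n-k$ and is not real-rooted in general (your factorization $(1+x)^5-1-x^5=5x(1+x)(1+x+x^2)$ is a correct witness), so no direct analogue of the telescoping identity is available and the inductive interlacing argument stalls. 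Your exponential generating function identity is also correct and is a potentially useful reformulation, but as you note, no standard transfer result yields real-rootedness of the coefficient sequence from it. In short: you have accurately located the same difficulty that presumably led the authors to leave this as a conjecture, and you have not overclaimed.
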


The following statement 
confirms a general conjecture of \cite{Ath12}, 
claiming that all flag triangulations of simplices 
have $\gamma$-positive local $h$-polynomials, in 
the special case of antiprism triangulations and 
provides evidence in favor 
of~\Cref{conj:ellA(x)-roots} (a). The 
$\gamma$-positivity of $\ell_\aA(\sigma_n, x)$
follows from that of the polynomial 
$\theta_\aA(\sigma_n, x) := h_\aA(\sigma_n, x) - 
h_\aA(\partial \sigma_n, x)$, which appeared in 
Conjecture~\ref{conj:thetaA}, and 
\cite[Theorem~4.4]{KMS19}. 
\begin{proposition} \label{prop:ell-sdA-gamma} 
The polynomial $\ell_\aA(\sigma_n, x)$ is 
$\gamma$-positive for every $n \in \NN$.
\end{proposition}

\begin{proof}
Theorem~4.4 in ~\cite{KMS19}, applied to $\sd_\aA
(\sigma_n)$, asserts that 
\[ \ell_\aA(\sigma_n, x) \ = \ \sum_{k=0}^n 
   {n \choose k} \theta_\aA (\sigma_k, x) d_{n-k}(x)
	 , \]
where $d_n(x)$ is the $n$th derangement polynomial 
discussed in Section~\ref{sec:triang}. The polynomial 
$\theta_\aA (\sigma_k, x)$ is symmetric, with center 
of symmetry $k/2$, and, as discussed after the proof
of Theorem~\ref{thm:h-sdA(simplex)-roots}, it has 
nonnegative coefficients and only real roots. As a 
result, it is
$\gamma$-positive, with center of symmetry $k/2$. 
The same property is shared by the derangement 
polynomial $d_k(x)$; see \cite[Theorem~2.13]{Ath18}.
Therefore, the right-hand side of the previous 
formula for $\ell_\aA(\sigma_n, x)$ is a sum of 
$\gamma$-positive polynomials, each with center of 
symmetry $k/2 + (n-k)/2 = n/2$, and the proof follows.
\end{proof}

A combinatorial interpretation of $\theta_\aA
(\sigma_n, x)$ can be deduced from 
Proposition~\ref{prop:ell-sdA}.
\begin{corollary} \label{prop:theta-sdA} 
The coefficient of $x^k$ in $\theta_\aA(\sigma_n, x)$ 
equals the number of ways to choose an ordered set 
partition $\pi$ of $[n]$ and to color $k$ elements of 
$[n]$ black and the remaining $n-k$ white, so that no 
block of $\pi$ is monochromatic and there is a black 
element which is larger than a white element in the 
last block of $\pi$. 
\end{corollary}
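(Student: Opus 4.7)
The plan is to identify $\theta_\aA(\sigma_n, x)$ as the ``non-tame'' part of the combinatorial interpretation of $\ell_\aA(\sigma_n, x)$ from Proposition~\ref{prop:ell-sdA}(b), via a short algebraic identity and one explicit bijection. Applying~(\ref{eq:h-localh}) to $\sd_\aA(\sigma_n)$ gives
\[ \ell_\aA(\sigma_n, x) \ = \ h_\aA(\sigma_n, x) \, - \, \sum_{k=0}^{n-1} \binom{n}{k} \ell_\aA(\sigma_k, x), \]
while~(\ref{eq:h-bdsigma}) applied to $\sd_\aA(\partial \sigma_n)$, which is an $\aA$-uniform triangulation of $\partial \sigma_n$, gives
\[ h_\aA(\partial \sigma_n, x) \ = \ \sum_{k=0}^{n-1} \binom{n}{k} \ell_\aA(\sigma_k, x) (1 + x + \cdots + x^{n-k-1}). \]
Subtracting these and invoking the definition $\theta_\aA(\sigma_n, x) = h_\aA(\sigma_n, x) - h_\aA(\partial \sigma_n, x)$ yields the key identity
\[ \ell_\aA(\sigma_n, x) \, - \, \theta_\aA(\sigma_n, x) \ = \ \sum_{k=0}^{n-1} \binom{n}{k} \ell_\aA(\sigma_k, x)(x + x^2 + \cdots + x^{n-k-1}). \]

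Next, by the second description in Proposition~\ref{prop:ell-sdA}(b), the coefficient of $x^j$ on the right-hand side counts quadruples $(S, \pi', c', i)$, where $S$ is a $k$-element subset of $[n]$, $\pi'$ is an ordered partition of $S$, $c'$ is a 2-coloring of $S$ with $j - i$ black elements and no monochromatic block of $\pi'$, and $i \in \{1, \dots, n - k - 1\}$. I would match these with pairs $(\pi, c)$, where $\pi = (B_1, \dots, B_m)$ is an ordered partition of $[n]$ and $c$ is a 2-coloring of $[n]$ with $j$ black elements, such that no block of $\pi$ is monochromatic and the last block $B_m$ is what I will call \emph{tame}: every black element of $B_m$ is smaller than every white element. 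The bijection sends such a pair to $S := B_1 \cup \cdots \cup B_{m-1}$, $\pi' := (B_1, \dots, B_{m-1})$, $c' := c|_S$, and $i := $ the number of black elements of $B_m$; non-monochromaticity of $B_m$ is equivalent to $1 \leq i \leq |B_m| - 1 = n - k - 1$, while the tameness condition uniquely recovers the coloring of $B_m = [n] \setminus S$ from $i$ (color the smallest $i$ elements black, the rest white).

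By Proposition~\ref{prop:ell-sdA}(b), the coefficient of $x^j$ in $\ell_\aA(\sigma_n, x)$ counts all pairs $(\pi, c)$ with $j$ black elements and no monochromatic block. Removing those with tame last block leaves precisely the pairs whose last block contains some black element larger than some white one, since among distinct elements the negation of ``every black is smaller than every white'' is exactly ``some black is larger than some white''. This is the statement of the corollary. I do not anticipate any significant obstacle: the argument is a short algebraic manipulation followed by one explicit bijection, and the bijection is essentially forced by the structure of the key identity.
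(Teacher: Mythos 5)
Your proposal is correct and follows essentially the same route as the paper's proof: establish the identity $\ell_\aA(\sigma_n, x) - \theta_\aA(\sigma_n, x) = \sum_{k} \binom{n}{k} \ell_\aA(\sigma_k, x)(x + \cdots + x^{n-k-1})$, then give the bijection matching the right-hand side with colored ordered partitions whose last block is tame. The only cosmetic difference is how you reach the identity: you subtract (\ref{eq:h-localh}) applied to $\sd_\aA(\sigma_n)$ from (\ref{eq:h-bdsigma}) applied to $\sd_\aA(\partial \sigma_n)$, while the paper obtains it directly as the difference of (\ref{eq:h-hbd-A(Delta)}) and (\ref{eq:ell-A(Delta)}) (equivalently, citing \cite[Lemma~4.1]{KMS19}); these are the same algebraic ingredients rearranged.
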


\begin{proof}
As a consequence of \cite[Lemma~4.1]{KMS19} (and as 
already discussed in the proof of 
Proposition~\ref{prop:A(uDelta)}), we have 
\[ \theta_\aA(\sigma_n, x) \ = \ \ell_\aA(\sigma_n, 
   x) \, - \, \sum_{m=0}^{n-2} {n \choose m} 
   \ell_\aA(\sigma_m, x) 
	 (x + x^2 + \cdots + x^{n-m-1}) \]
for every positive integer $n$. By the second 
interpretation of $\ell_\aA(\sigma_n, x)$ provided 
by Proposition~\ref{prop:ell-sdA} (b), the coefficient
of $x^k$ in the sum on the right-hand side is equal
to the number of ways to choose an ordered set 
partition $\pi$ of $[n]$ and to color $k$ elements of 
$[n]$ black and the remaining $n-k$ white, so that no 
block of $\pi$ is monochromatic and every black 
element in the last block of $\pi$ is smaller than 
every white element of that block. Thus, the proposed 
interpretation of $\theta_\aA(\sigma_n, x)$ follows 
from the previous equation and 
Proposition~\ref{prop:ell-sdA}.
\end{proof}

\subsection{Face-vector transformations} 
\label{sec:transform}

The general results of \cite{Ath20+} on uniform 
triangulations of simplicial complexes imply that 
there exist nonnegative integers $q_\aA(n,k)$ and 
$p_\aA(n,k,j)$ for $n, k, j \in \NN$ with $k, j 
\le n$ such that
\begin{equation}
\label{eq:f-transform}
f_{j-1}(\sd_\aA(\Delta)) \ = \ \sum_{k=j}^n 
       q_\aA(k,j) f_{k-1}(\Delta) 
\end{equation} 
and
\begin{equation}
\label{eq:h-transform}
h_j(\sd_\aA(\Delta)) \ = \ \sum_{k=0}^n p_\aA(n,k,j) 
    h_k(\Delta) 
\end{equation} 
for every $(n-1)$-dimensional simplicial complex 
$\Delta$ and every $j \in \{0, 1,\dots,n\}$. 
The former equation is easy to explain; a 
simple counting argument (see the proof of 
\cite[Theorem~4.1]{Ath20+}) shows its validity
when $q_\aA(n,k)$ is defined as the number of 
$(k-1)$-dimensional faces in the interior of the
antiprism triangulation of $\sigma_n$. This yields 
the following statement.
\begin{proposition} \label{prop:f-transform} 
For all integers $n \ge 1$ and $k \in \{0, 
1,\dots,n\}$, $q_\aA(n,k)$ is equal to the number 
of multi-pointed ordered set partitions of $[n]$ of 
weight $k$. Moreover, we have the explicit formula 
\[ q_\aA(n,k) \ = \ {n \choose k} \sum_{j=0}^k j!  
        S(k,j) j^{n-k} \]
where, as usual, $S(k,j)$ is a Stirling number of 
the second kind.
\end{proposition}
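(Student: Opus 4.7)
The plan is to extract both claims directly from the combinatorial description of $\sd_\aA(\sigma_n)$ already recorded in Section~\ref{sec:atriang}, together with a straightforward enumeration.

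For the first assertion, I would begin from the definition of $q_\aA(n,k)$ as the number of $(k-1)$-dimensional interior faces of the antiprism triangulation of $\sigma_n$ (as justified in \cite[Theorem~4.1]{Ath20+} and referenced just before the proposition). As recalled in Section~\ref{sec:atriang}, the faces of $\sd_\aA(\sigma_n)$ are in bijection with the multi-pointed partial ordered partitions of $[n]$, and the interior faces correspond precisely to those whose underlying partial partition uses up all of $[n]$; that is, to multi-pointed ordered partitions of $[n]$. Moreover, the dimension of a face is one less than the number of its vertices, which in this bijection equals the total number of chosen elements, i.e.\ the weight. This identifies $q_\aA(n,k)$ with the number of multi-pointed ordered partitions of $[n]$ of weight $k$.

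For the explicit formula, I would enumerate these objects by first specifying the set of pointed elements and then building the partition around it. Concretely, a multi-pointed ordered partition $(\pi,\tau)$ of $[n]$ of weight $k$ is determined by the following independent choices:
\begin{itemize}
\itemsep=0pt
\item[$\bullet$] the set $A \subseteq [n]$ of cardinality $k$ consisting of the pointed elements (those lying in some $C_i$), which can be chosen in $\binom{n}{k}$ ways;
\item[$\bullet$] a number $j$ of blocks and an ordered partition of $A$ into $j$ nonempty blocks $C_1,C_2,\dots,C_j$, which can be chosen in $j!\,S(k,j)$ ways;
\item[$\bullet$] an assignment of each of the remaining $n-k$ elements of $[n]\sm A$ to one of the $j$ blocks (determining $B_i = C_i \cup (\text{elements assigned to } i)$), which can be done in $j^{n-k}$ ways.
\end{itemize}
Since the original $C_i$ are already nonempty, every block $B_i$ is automatically nonempty, and conversely every multi-pointed ordered partition of weight $k$ arises in exactly one way from such a triple of choices. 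Summing over $j$ yields the claimed formula.

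There is no real obstacle here: the first assertion is essentially a restatement of the combinatorial description of interior faces from Section~\ref{sec:atriang}, and the formula follows from a routine three-step count. The only minor point to note is the boundary case $k=0$, where the sum reduces to $0^n = 0$ for $n\ge 1$ (reflecting that the empty face is never interior), and the case $k=n$, where only the term $j=k$ survives and recovers the count $n!$ of ordered partitions of $[n]$ into singletons-weighted fully, as expected for the number of facets of $\sd_\aA(\sigma_n)$.
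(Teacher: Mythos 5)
Your proof matches the paper's argument: both identify $q_\aA(n,k)$ with interior faces via the bijection to multi-pointed ordered partitions of $[n]$ of weight $k$, then count by choosing the $k$-element set of pointed elements, an ordered partition of it into $j$ blocks, and a distribution of the remaining $n-k$ elements into those blocks, yielding $\binom{n}{k}\, j!\, S(k,j)\, j^{n-k}$ for each $j$. One small slip in your closing sanity check: for $k=n$ the factor $j^{n-k}=j^0=1$ does not kill the terms with $j<k$, so the sum is the ordered Bell number $\sum_{j} j!\,S(n,j)$ (the number of ordered set partitions of $[n]$), not $n!$; this is still consistent with the facet count of $\sd_\aA(\sigma_n)$, but the formula does not collapse to a single term.
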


\begin{proof} 
The proposed combinatorial interpretation follows
from our previous discussion and that in 
Section~\ref{sec:atriang}. To verify the formula,
we note that there are ${n \choose k} \cdot j! S(k,j)$ 
ways to choose a $k$-element subset $S$ of $[n]$ and 
an ordered partition of $S$ with $j$ blocks and, for 
each such choice there are $j^{n-k}$ ways to 
distribute the remaining $n-k$ elements of $[n]$ in 
the blocks so as to form a multi-pointed ordered set 
partition of $[n]$ with set of chosen elements equal
to $S$.
\end{proof}

Equation~(\ref{eq:h-transform}) and the nonnegativity 
of the coefficients $p_\aA(n,k,j)$ which appear there 
are less obvious. Various interpretations, an explicit
formula and a recurrence are given 
for these numbers in \cite{Ath20+} in the general 
framework of uniform triangulations. In particular, 
as shown in \cite[Corollary~5.6]{Ath20+} (and 
originally by the second and third author of this 
paper), the recurrence 
\begin{equation}
\label{eq:recurrence}
p_\aA(n,k,j) \ = \ p_\aA(n,k-1,j) + 
p_\aA(n-1,k-1,j-1) - p_\aA(n-1,k-1,j)
\end{equation} 
holds for all $k, j \in \{0, 1,\dots,n\}$ with 
$k \ge 1$. We will also keep in mind that $p_\aA
(n,0,j) = p_\aA(n,j)$ is the coefficient of $x^j$ 
in $h_\aA(\sigma_n, x)$. This observation is the
special case $\Delta = \sigma_n$ of 
Equation~(\ref{eq:h-transform}).

The following combinatorial interpretations of 
$p_\aA(n,k,j)$ generalize some of those given for
$p_\aA(n,k)$ in 
Proposition~\ref{prop:h-sdA(simplex)}.
\begin{proposition} \label{prop:h-transform} 
For all integers $n \ge 1$ and $k \in 
\{0, 1,\dots,n\}$, $p_\aA(n,k,j)$ is equal to:
\begin{itemize}
\itemsep=0pt
\item[$\bullet$]
the number of ways to choose a set $[k] \subseteq 
S \subseteq [n]$ and an ordered set partition $\pi$ 
of $S$ and to color $j$ elements of $S$ black and 
the remaining elements white, so that the following 
condition holds: if a block $B$ of $\pi$ is 
monochromatic, then 
\begin{itemize}
\itemsep=0pt
\item[$\circ$] $B$ is the first block of $\pi$,
\item[$\circ$] $B \subseteq [k]$, and
\item[$\circ$] all elements of $B$ are colored black.
\end{itemize}

\item[$\bullet$]
the number of ordered set partitions $(B_1, 
B_2,\dots,B_m)$ of $[n]$ for which the following 
conditions hold: 
\begin{itemize}
\itemsep=0pt
\item[$\circ$] 
if $m$ is even, then $\bigcup_{i=1}^{\lfloor m/2 
\rfloor} B_i$ has exactly $j$ elements, and
\item[$\circ$] if $m$ is odd, then the union of
$\bigcup_{i=1}^{\lfloor m/2 \rfloor} B_i$ and $B_m 
\cap [k]$ has exactly $j$ elements.
\end{itemize}
\end{itemize}
\end{proposition}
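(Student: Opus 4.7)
The plan is to show that each of the two interpretations gives counts satisfying recurrence \eqref{eq:recurrence} with the same boundary value at $k=0$; induction on $k$ then forces both families to coincide with $p_\aA(n,k,j)$. The boundary reduces to \Cref{prop:h-sdA(simplex)}(b): at $k=0$, the first interpretation's rule forces any monochromatic block $B$ to lie in $\varnothing$, giving the second bullet of \Cref{prop:h-sdA(simplex)}(b); and the term $B_m \cap [k]$ in the second interpretation collapses to $\varnothing$ at $k=0$, giving the third bullet.

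For the first interpretation, let $Q(n,k,j)$ denote the described family and split $Q(n,k,j) = A \sqcup B$, where $B$ consists of the objects whose monochromatic first block contains $k$. Removing $k$ from this first block (and dropping that block if it becomes $\varnothing$) yields a bijection $B \to Q(n-1,k-1,j-1)$; the inverse either enlarges an existing monochromatic first block $\subseteq [k-1]$ by $k$ or prepends $\{k\}$ as a new first block. On the other hand, $A$ coincides with the subset of $Q(n,k-1,j)$ in which $k \in S$; restriction to $[n] \setminus \{k\}$ identifies $Q(n,k-1,j) \setminus A$ with $Q(n-1,k-1,j)$. Summing yields $|Q(n,k,j)| = |Q(n,k-1,j)| - |Q(n-1,k-1,j)| + |Q(n-1,k-1,j-1)|$, which is \eqref{eq:recurrence}.

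For the second interpretation, let $R(n,k,j)$ denote the family of ordered set partitions described. All $k$-dependence sits in the odd-$m$ clause via $B_m \cap [k]$, and only affects partitions with $k \in B_m$. I would construct a bijection between odd-$m$ ordered partitions of $[n]$ with $k \in B_m$ and arbitrary ordered partitions of $[n-1]$, by removing $k$ from $B_m$ (and dropping the block if $B_m = \{k\}$, turning an odd-$m$ partition into an even-$(m-1)$ one). In both subcases the weight of the original $[n]$-partition at parameter $k$ exceeds the weight of the image at parameter $k-1$ by exactly $1$, since $k$ always belongs to the ``first half'' union on the $[n]$ side but is absent on the $[n-1]$ side. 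Combined with cancellation of contributions from even-$m$ partitions and from odd-$m$ partitions with $k \notin B_m$, this yields $|R(n,k,j)| - |R(n,k-1,j)| = |R(n-1,k-1,j-1)| - |R(n-1,k-1,j)|$, which is \eqref{eq:recurrence} rearranged.

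The main technical obstacle is the parity-change step in the second interpretation: when $B_m = \{k\}$, the bijection sends an odd-$m$ partition to an even-$(m-1)$ one, so the weight formula must be checked to produce the same $+1$ shift as in the generic $|B_m|>1$ subcase. Once both consistency checks are in place, induction on $k$ completes the proof.
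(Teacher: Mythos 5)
Your proof is correct. For the first interpretation, your argument is essentially the paper's: both verify that the counts $|Q(n,k,j)|$ satisfy recurrence \eqref{eq:recurrence} by analyzing the presence or absence of $k$ (in $S$, and in the monochromatic block), with the base case $k=0$ handled by \Cref{prop:h-sdA(simplex)}(b). You spell out the decomposition $Q(n,k,j) = A \sqcup B$ a bit more explicitly, but the content is the same.

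For the second interpretation, your route is genuinely different from the paper's. The paper proves it by a direct weight-preserving bijection from $Q(n,k,j)$ to the set of ordered partitions $R(n,k,j)$, extending the bijection from the $k=0$ case in \Cref{prop:h-sdA(simplex)} (the monochromatic block is folded into the last block of the produced ordered partition). You instead verify the recurrence for $|R(n,k,j)|$ directly by noting that, when passing from $k-1$ to $k$, the only affected partitions are those with odd $m$ and $k \in B_m$, and that these are in weight-shifted bijection with all ordered partitions of $[n-1]$. Both approaches work; the paper's bijection has the advantage of exhibiting an explicit correspondence between the two descriptions, while your recurrence argument is perhaps more mechanical but avoids having to craft a new map. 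Your claimed resolution of the parity-change subcase ($B_m = \{k\}$) does check out: the odd-$m$ weight is $|\bigcup_{i\le (m-1)/2} B_i| + 1$, which exceeds the even-$(m-1)$ weight $|\bigcup_{i\le (m-1)/2} B_i|$ of the image by exactly $1$, matching the generic subcase.

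One small imprecision worth fixing: you write that ``$k$ always belongs to the `first half' union on the $[n]$ side,'' but in fact $k \in B_m$, which is the last block and not part of $\bigcup_{i\le\lfloor m/2\rfloor} B_i$; the extra $+1$ in the weight comes from the $B_m \cap [k]$ term, not the first-half union. This does not affect the validity of the conclusion, but the explanation should be corrected. Also, since the recurrence involves both $(n,k-1)$ and $(n-1,k-1)$, it is worth stating explicitly that the induction is on $k$ with the base case $k=0$ established simultaneously for all $n$; as written, ``induction on $k$'' is a little terse on this point.
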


\begin{proof} 
Let $Q(n,k,j)$ be the collection of triples of sets 
$S$, partitions of $S$ and colorings of the elements 
of $S$ described in the first proposed combinatorial 
interpretation of $p_\aA(n,k,j)$ and let $q(n,k,j)$ 
be the cardinality of $Q(n,k,j)$. 
We will show that $p_\aA(n,k,j) = q(n,k,j)$. This 
is true for $k=0$ by the first combinatorial 
interpretation of $p_\aA(n,j) = p_\aA(n,0,j)$ 
provided by Proposition~\ref{prop:h-sdA(simplex)}.
Thus, it suffices to show that the numbers $q(n,k,j)$ 
satisfy recurrence (\ref{eq:recurrence}) or, 
equivalently, that 
\[ q(n,k-1,j) \ = \ q(n,k,j) + q(n-1,k-1,j)  - 
   q(n-1,k-1,j-1) \]
for $k \ge 1$. By definition, $q(n,k-1,j)$ is the 
number of triples in the collection $Q(n,k-1,j)$, 
each one consisting of a set $S$, a partition of $S$ 
and a coloring of the elements of $S$ having certain 
properties. Clearly, we have $k \not\in S$ for 
exactly $q(n-1,k-1,j)$ of these triples. Moreover,
we have $k \in S$ for exactly $q(n,k,j) - 
q(n-1,k-1,j-1)$ of them, since for exactly 
$q(n-1,k-1,j-1)$ of the triples in $Q(n,k,j)$  
there is a monochromatic block which contains $k$.
This proves the first interpretation. 

As an alternative proof, by computing the 
coefficient of $x^j$ in the right-hand side of 
\cite[Equation~(12)]{Ath20+} we get the explicit 
expression
\[ p_\aA(n,k,j) \ = \ \sum_{r=0}^n \sum_{i=0}^j 
   {k \choose i} {n-k \choose n-r-i} 
	 \ell_\aA(r,j-i). \]
The double sum on the right side is also equal to
$q(n,k,j)$ since to choose a set $S$, a partition 
$\pi$ and a coloring as in the statement of the 
proposition so that a monochromatic block $B$ has 
exactly $i$ elements, 
if present, and there is a total of $r$ elements 
in the remaining blocks of $\pi$, there are 
${k \choose i}{n-k \choose n-r-i}$ ways to choose 
the $i$ elements of $B$ and the $n-r-i$ elements of
$[n]$ not in the blocks of $\pi$ and for each such 
choice, by Proposition~\ref{prop:ell-sdA}, there 
are $\ell_\aA(r,j-i)$ ways to choose the blocks of 
$\pi$ other than $B$.

To prove the second interpretation, it suffices to
find a bijection from $Q(n,k,j)$ to the collection 
of ordered set partitions described there. Such a 
bijection can be constructed as an obvious extension 
of the one provided in the proof of 
Proposition~\ref{prop:h-sdA(simplex)} for the 
special case $k=0$. More specifically, the elements 
of the monochromatic block, if present, of a colored 
ordered partition in $Q(n,k,j)$ should be included 
in the last block of the ordered partition produced 
by the bijection; the details are left to the 
interested reader.
\end{proof}

\section{Lefschetz properties}
\label{sec:Lefschetz}

This section reviews basic definitions and background 
on Lefschetz properties for simplicial complexes and 
includes some preliminary technical results, which 
will be applied in the following section in the context 
of antiprism triangulations.

Let $\Delta$ be an $(n-1)$-dimensional simplicial 
complex which is Cohen--Macaulay over an infinite 
field $\FF$ and let $s \le n$ be a positive 
integer. We say that $\Delta$ has the 
\emph{$s$-Lefschetz property} (over $\FF$) if 
there exists a linear system of parameters $\Theta$ 
for $\FF[\Delta]$ and a linear form $\omega \in 
\FF[\Delta]$, such that the multiplication maps
$$
\cdot\omega^{s-2i}:\left(\FF[\Delta]/
\Theta\FF[\Delta]\right)_i\to\left(\FF[\Delta]/
\Theta\FF[\Delta]\right)_{s-i}
$$
are injective for all $0 \le i \le \lfloor (s-1)/2 
\rfloor$. Following \cite{KN09}, we call $\Delta$ 
\emph{almost strong Lefschetz} (over $\FF$) if it
has the $(n-1)$-Lefschetz property. Usually, if 
$\Delta$ has the $n$-Lefschetz property 
and, additionally, the above multiplication maps 
are isomorphisms, one says that $\Delta$ is 
\emph{strong Lefschetz} (or $\Delta$ has the 
\emph{strong Lefschetz property}). Lefschetz 
properties are an important tool in the area of 
face enumeration of simplicial complexes; various 
classes of simplicial complexes, the most 
prominent probably being boundary complexes of 
simplicial polytopes \cite{Sta80}, are known to 
have such properties. Barycentric subdivisions 
of shellable simplicial complexes were shown 
in~\cite{KN09} to be almost strong Lefschetz 
over $\FF$. 

The proof of \Cref{thm:LefschetzUnimodal}, which 
follows similar lines, is fairly elementary and does 
not require heavy machinery. Since it is rather 
lengthy, we now explain the main steps to guide 
the reader through it. The main idea to show that 
the antiprism triangulation of a shellable simplicial
complex is almost strong Lefschetz is to use induction 
on the number of facets and the dimension. The 
inductive step (see \Cref{thm:LefschetzShellable}) 
essentially follows from a short exact sequence and 
some standard arguments for commutative diagrams. 
The hardest part is the base of the induction, 
namely to prove the almost strong Lefschetz
property for the antiprism triangulation of a 
simplex (see \Cref{thm:SimplexLefschetz}). The main
idea there is to show that $\sd_\aA(\sigma_n)$ is 
almost strong Lefschetz if and only if so is its 
boundary complex (\Cref{prop:aASimplexToBoundary}). 
Since the boundary complex can be realized as the 
boundary complex of a simplicial polytope 
(\Cref{prop:SLPSigma}), the claim follows from 
\cite{Sta80}. To prove \Cref{prop:aASimplexToBoundary}, 
we provide an explicit sequence of edge contractions 
which preserve the almost strong Lefschetz property  
and transform  $\partial\sd_\aA(\sigma_n)$ into 
$\sd_\aA(\sigma_n)$. 
It is known that Lefschetz properties behave well 
if the mentioned edge contractions are sufficiently 
nice. Before we can make this more precise, we need 
to introduce some definitions. 

Let $\Delta$ be a simplicial complex on a vertex 
set $V$ which is endowed with a total order $<$. 
Given an edge $e = \{a, b\}\in \Delta$ with $a < b$, 
the \emph{contraction} $\cC_\Delta(e)$ of $\Delta$ 
with respect to $e$ is the simplicial complex on 
the vertex set $V \sm \{b\}$ which is obtained 
from $\Delta$ by identifying vertices $a$ and $b$, 
i.e.,
\begin{equation*}
\cC_\Delta(e) \ := \ \{F \in \Delta~:~b \notin F\} 
\, \cup \, \{(F \sm \{b\})\cup\{a\}~:~b \in F \in 
\Delta\}.
\end{equation*}
We say that $\Delta$ satisfies the \emph{Link 
Condition} with respect to $e$ if 
\begin{equation*}
\link_\Delta(e) \ = \ \link_\Delta(\{a\}) \cap 
\link_\Delta(\{b\}).
\end{equation*}

\begin{proposition} \label{prop:LinkCondition}
Let $\FF$ be an infinite field and let $\Delta$ 
be an $(n-1)$-dimensional Cohen--Macaulay complex 
over $\FF$. Suppose $\Delta$ satisfies the 
Link Condition with respect to an edge $e \in 
\Delta$. If $\cC_\Delta(e)$ is Cohen--Macaulay 
over $\FF$ of dimension $n-1$ and both 
$\link_\Delta(e)$ and $\cC_\Delta(e)$ are strong 
(respectively, almost strong) Lefschetz over 
$\FF$, then so is $\Delta$.
\end{proposition}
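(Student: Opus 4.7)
The plan is to transfer the Lefschetz property from $\cC := \cC_\Delta(e)$ and $L := \link_\Delta(e)$ to $\Delta$ by exploiting a short exact sequence of Artinian reductions whose exactness is guaranteed by the Link Condition. Writing $e = \{a, b\}$ with $a < b$, the first step is to establish, at the level of appropriate Artinian reductions, a short exact sequence of graded $\FF$-vector spaces
\begin{equation*}
0 \,\to\, A(L)(-1) \,\xrightarrow{\;\cdot x_b\;}\, A(\Delta) \,\longrightarrow\, A(\cC) \,\to\, 0,
\end{equation*}
where $A(\cdot) := \FF[\cdot]/\Theta\FF[\cdot]$ is the reduction by a suitable l.s.o.p.\ $\Theta$, the right-hand surjection is induced by the contraction identifying $a$ with $b$, and the left-hand injection sends (the class of) $x_F$, for $F \in L$, to (the class of) $x_b \cdot x_F$. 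The Link Condition $\link_\Delta(\{a\}) \cap \link_\Delta(\{b\}) = L$ is precisely what is needed to control the interaction of the contraction with non-faces of $\Delta$ and to identify the kernel of the surjection with multiplication by $x_b$ on $\FF[L]$. Consistency at the numerical level is recorded by the identity $h_k(\Delta) = h_k(\cC) + h_{k-1}(L)$, which is the edge contraction rule for $h$-vectors under the Link Condition.

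Next, since $\FF$ is infinite and $\Delta$, $\cC$, and $L$ are all Cohen--Macaulay of the respective dimensions $n-1$, $n-1$, and $n-3$ (Cohen--Macaulayness of $L$ being automatic from that of $\Delta$), a Zariski-generic l.s.o.p.\ $\Theta$ for $\FF[\Delta]$ descends through the contraction to an l.s.o.p.\ for $\FF[\cC]$ and restricts to an l.s.o.p.\ for $\FF[L]$. By the same kind of genericity argument, a linear form $\omega \in A(\Delta)_1$ can be chosen whose images in $A(\cC)$ and $A(L)$ are Lefschetz elements of the required type, since in each case the set of Lefschetz elements is Zariski open in the degree-one component. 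Setting $s = n$ in the strong Lefschetz case and $s = n - 1$ in the almost strong case, multiplication by $\omega^{s - 2i}$ for each admissible $i$ yields a commutative diagram
\begin{equation*}
\begin{array}{ccccccc}
0 \to & A(L)_{i-1} & \to & A(\Delta)_i & \to & A(\cC)_i & \to 0 \\
& \downarrow & & \downarrow & & \downarrow & \\
0 \to & A(L)_{s-i-1} & \to & A(\Delta)_{s-i} & \to & A(\cC)_{s-i} & \to 0
\end{array}
\end{equation*}
with exact rows. In both cases the Lefschetz parameter for $A(\cC)$ coincides with the parameter $s$ for $A(\Delta)$, while the parameter for $A(L)$ is $s - 2$; hence the exponent $s - 2i$ matches simultaneously the Lefschetz exponent of $A(\cC)$ at position $i$ and the Lefschetz exponent $(s - 2) - 2(i - 1)$ of $A(L)$ at position $i - 1$. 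Both outer vertical maps are therefore injective (isomorphisms in the strong case) by hypothesis, and the five-lemma forces the middle map to be so as well, yielding the desired property for $\Delta$.

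The main obstacle is the construction and verification of the short exact sequence in the first step: the Link Condition must be used essentially to ensure that the contraction induces a well-defined surjection onto $\FF[\cC]$ with kernel cleanly described by multiplication by $x_b$ coming from $\FF[L]$, and to rule out pathological non-faces of $\Delta$ that would otherwise destroy exactness. A secondary subtlety is the simultaneous choice of an l.s.o.p.\ and a Lefschetz element that work for all three rings, which is handled by genericity over the infinite field $\FF$.
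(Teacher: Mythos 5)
Your plan hinges on the existence of a short exact sequence
\begin{equation*}
0 \,\to\, A(L)(-1) \,\xrightarrow{\;\cdot x_b\;}\, A(\Delta) \,\longrightarrow\, A(\cC) \,\to\, 0,
\end{equation*}
with the surjection ``induced by the contraction identifying $a$ with $b$.'' This map does not exist as a map of graded algebras (and hence there is no corresponding map of Artinian reductions). The contraction sends $x_b \mapsto x_a$, but this ring map $\FF[x_v : v \in V] \to \FF[x_v : v \in V \sm \{b\}]$ does not carry $I_\Delta$ into $I_{\cC_\Delta(e)}$. Concretely, if $c$ is a vertex with $\{b,c\} \notin \Delta$ but $\{a,c\} \in \Delta$ (which is perfectly compatible with the Link Condition, since then $c \notin \link_\Delta(b)$ forces $c \notin \link_\Delta(a) \cap \link_\Delta(b) = \link_\Delta(e)$), then $x_b x_c \in I_\Delta$ but its image $x_a x_c$ lies outside $I_{\cC_\Delta(e)}$. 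The Link Condition does not rule this out; it controls the link of $e$, not the individual links of $a$ and $b$. The numerical identity $h_k(\Delta) = h_k(\cC) + h_{k-1}(L)$ you cite is correct, but matching Hilbert functions does not produce the needed maps.

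The paper's proof sidesteps exactly this difficulty by passing to the shifted complex $\Shift_e(\Delta)$, which (by \cite[Lemma~2.1]{Mur10}, and this is where the Link Condition enters) decomposes as $\Shift_e(\Delta) = \cC_\Delta(e) \cup \Star_\Delta(e)$. Since $\cC_\Delta(e)$ is now a genuine \emph{subcomplex} of $\Shift_e(\Delta)$, the surjection $\FF[\Shift_e(\Delta)] \to \FF[\cC_\Delta(e)]$ is the honest quotient map, and its kernel is identified with $x_b \cdot \FF[\Star_\Delta(e)]$. The diagram chase (essentially what you describe) then establishes the Lefschetz property for $\Shift_e(\Delta)$, not for $\Delta$. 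The transfer back to $\Delta$ is a separate, and essential, step: by \cite[Lemma~2.2]{Mur10}, $I_{\Shift_e(\Delta)}$ is an initial ideal of $I_\Delta$ with respect to a suitable term order, and Wiebe's result \cite[Proposition~2.9]{Wie04} then pushes the (almost) strong Lefschetz property from the initial ideal to $I_\Delta$ itself. Your proposal omits this shifting-plus-initial-ideal mechanism entirely; without it, the argument does not reach $\Delta$.
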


We note that since, e.g., by Reisner's criterion 
\cite{Rei76}, the Cohen--Macaulay property is 
inherited by links, it is guaranteed that 
$\link_{\Delta}(e)$ is Cohen--Macaulay. On the 
contrary, the contraction of an edge does not even 
need to be pure. \Cref{prop:LinkCondition} was 
proved in \cite[Proposition 3.2]{Mur10} for the 
strong Lefschetz property if $\FF$ is an arbitrary 
infinite field of any characteristic (see also 
\cite[Theorem 2.2]{BN10} for the same result in 
characteristic zero). Since it is not entirely 
obvious, although reasonable to believe, that 
the proofs go through for the almost strong 
Lefschetz property, we sketch the main steps of 
the proof.

\medskip
\noindent
\emph{Proof of Proposition~\ref{prop:LinkCondition}}.
Let $V$ be the vertex set of $\Delta$ and $e = \{a, 
b\} \in \Delta$, where $a < b$. Following 
\cite{Mur10}, we consider the shift operator 
\begin{align*}
C_e(F) \ = \ \begin{cases}
(F \sm \{b\}) \cup \{a\}, &\mbox{ if } b \in F, \,
a \notin F \mbox{ and } (F \sm \{b\}) \cup \{a\} 
\notin \Delta,\\
F, &\mbox{ otherwise,}
\end{cases}
\end{align*}
which goes back to \cite{EKR61}, and set $\Shift_e
(\Delta) = \{C_e(F): F \in \Delta\}$. Since the Link 
Condition holds for $e$, \cite[Lemma 2.1]{Mur10} 
implies that 
$$
\Shift_e(\Delta) \ = \ \cC_\Delta(e) \cup \{\{b\}
\cup F~:~F \in a \ast \link_\Delta(e)\}.
$$
This implies that $\Shift_e(\Delta) = \cC_\Delta(e) 
\cup \Star_\Delta(e)$ and, as a result, there is the 
exact sequence of $\FF[x_v: v \in V]$-modules
\begin{equation}\label{eq:exact}
0 \to \FF[\Star_\Delta(e)] \to \FF[\Shift_e(\Delta)] 
  \to \FF[\cC_\Delta(e)] \to 0,
\end{equation}
where the first map is given by multiplication with 
$x_b$. Since $\link_\Delta(e)$ is $(n-3)$-Lefschetz, 
so is $\Star_\Delta(e)$ (see, e.g., 
\cite[Lemma 2.1]{KN09}). Hence, there exist $\Theta
= (\theta_1, \theta_2,\dots,\theta_n)$ and a linear 
form $\omega \in \FF[x_v: v \in V]$ such that $\Theta$ 
is an l.s.o.p. for $\FF[\Star_\Delta(e)]$, 
$\FF[\Shift_e(\Delta)]$ and $\FF[\cC_\Delta(e)]$ 
simultaneously and $\omega$ is an $(n-1)$- and 
$(n-3)$-Lefschetz element for $\cC_\Delta(e)$ and 
$\Star_\Delta(e)$, respectively, with respect to 
$\Theta$. Hence, from \eqref{eq:exact} we get the 
commutative diagram 
$$\begin{array}{ccccccccc}
   0&\to& \FF(\Star_\Delta(e))_{\ell-1} &\to& \FF(\Shift_e(\Delta))_\ell &\to & \FF(\cC_\Delta(e))_\ell&\to& 0\\
    & & & & & & &&\\
   & & \downarrow \omega^{n-3-2(\ell-1)}& &\ \ \downarrow \omega^{n-2\ell-1}& & \ \downarrow \omega^{n-2\ell-1}& & \\
     & & & & & & &&\\
      &\empty &  \FF(\Star_\Delta(e))_{n-2-\ell} &\to& \FF(\Shift_e(\Delta))_{n-1-\ell}&\to & \FF(\cC_\Delta(e))_{n-1-\ell}&\to &0
   \end{array}$$  
for $0 \le \ell \leq \lfloor (n-1)/2\rfloor$, where 
we have written $\FF(\Star_\Delta(e))$ for 
$\FF[\Star_\Delta(e)]/\Theta$ and similarly for 
$\FF(\Shift_e(\Delta))$ and $\FF(\cC_\Delta(e))$, 
and we have set $\FF(\Star_\Delta(e))_{-1} = 0$. 

Since the left and right vertical maps are injective 
by assumption, so is the middle map by the snake 
lemma. Thus, $\Shift_e(\Delta)$ has the almost strong 
Lefschetz property. Moreover, since $\Delta$ 
satisfies the Link Condition with respect to $e$, we 
conclude from \cite[Lemma 2.2]{Mur10} that 
$I_{\Shift_e(\Delta)}$ is an initial ideal of 
$I_\Delta$ with respect to a certain term order. 
Finally, $\Delta$ has the $(n-1)$-Lefschetz 
property by \cite[Proposition 2.9]{Wie04}. Wiebe's 
orginal result was for $\mathfrak{m}$-primary 
homogeneous ideals having the strong Lefschetz 
property. However, the same proof works in our 
setting. 
\qed\\

Given a simplicial complex $\Delta$ and 
face $U = \{u_1, u_2,\dots,u_n\} \in \Delta$, we 
say that $\Delta$ satisfies the \emph{strong Link 
Condition} with respect to $U$ if 
\begin{equation}\label{eq:StrongLinkCondition}
\link_\Delta(F)\cap\link_\Delta(G) \ = \ 
\link_\Delta(F\cup G)
\end{equation}
for all $F,G\subseteq U$ with $F \cap G = \varnothing$. 
Note that, in this case, \eqref{eq:StrongLinkCondition} 
holds for all (not necessarily disjoint) subsets $F,
G\subseteq U$. The following technical lemma relates 
the strong Link Condition to the usual Link Condition.

\begin{lemma}\label{lem:LinkCondition}
Let $\Delta$ be a simplicial complex which satisfies 
the strong Link Condition with respect to the face
$U = \{u_1, u_2,\dots,u_n\}$. Then, $\cC_\Delta
(\{u_{n-1},u_n\})$ satisfies the strong Link Condition 
with respect to $U \sm \{u_n\}$. 

In particular, all edges of $2^U$ can be contracted 
successively so that at each step, the Link Condition 
is satisfied with respect to the contracted edge.
\end{lemma}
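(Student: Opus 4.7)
The plan is to verify directly that $\Delta' := \cC_\Delta(\{u_{n-1}, u_n\})$ satisfies $\link_{\Delta'}(F) \cap \link_{\Delta'}(G) = \link_{\Delta'}(F \cup G)$ for all disjoint $F, G \subseteq U' := U \sm \{u_n\}$. The inclusion $\supseteq$ is immediate from downward closure in $\Delta'$, so only the other direction requires work. The preparatory step is to translate membership in $\Delta'$ back to $\Delta$: for any $S$ with $u_n \notin S$, one has $S \in \Delta'$ iff $S \in \Delta$ when $u_{n-1} \notin S$, while $S \in \Delta'$ iff $S \in \Delta$ or $(S \sm \{u_{n-1}\}) \cup \{u_n\} \in \Delta$ when $u_{n-1} \in S$. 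I would record this translation as a small preliminary observation before the main case analysis.

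Next, fix $H \in \link_{\Delta'}(F) \cap \link_{\Delta'}(G)$ and perform a case analysis on where $u_{n-1}$ lies. When $u_{n-1}$ is absent from $H \cup F \cup G$, the two hypotheses read $H \cup F, H \cup G \in \Delta$ and the strong Link Condition of $\Delta$ at $U$ yields $H \cup F \cup G \in \Delta \subseteq \Delta'$. When $u_{n-1} \in F$ (the case $u_{n-1} \in G$ being symmetric) but $u_{n-1} \notin H$, the condition $H \cup F \in \Delta'$ splits into two alternatives involving $\link_\Delta(F)$ or $\link_\Delta((F \sm \{u_{n-1}\}) \cup \{u_n\})$, and combining either with $H \in \link_\Delta(G)$ through the strong Link Condition at $U$ lands inside $\Delta'$. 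The genuinely subtle case is $u_{n-1} \in H$: setting $H' := H \sm \{u_{n-1}\}$, each of $H \cup F \in \Delta'$ and $H \cup G \in \Delta'$ splits into two alternatives, producing four sub-cases; the two \emph{matching} sub-cases use the strong Link Condition at $U$ (which, by the remark following its definition, holds for arbitrary, not necessarily disjoint, subsets of $U$), and the two \emph{mismatching} sub-cases, for instance $H' \in \link_\Delta(F \cup \{u_{n-1}\}) \cap \link_\Delta(G \cup \{u_n\})$, are resolved by applying the strong Link Condition to the disjoint subsets $F \cup \{u_{n-1}\}$ and $G \cup \{u_n\}$ of $U$ to obtain $H' \cup F \cup G \cup \{u_{n-1}, u_n\} \in \Delta$, then dropping $u_n$ by downward closure.

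For the \emph{in particular} assertion I would iterate: the strong Link Condition at $U$ specializes to the ordinary Link Condition for the edge $\{u_{n-1}, u_n\}$ (take $F = \{u_{n-1}\}$, $G = \{u_n\}$), so this edge may be contracted first; the main statement just proved shows that $\cC_\Delta(\{u_{n-1}, u_n\})$ satisfies the strong Link Condition with respect to $U \sm \{u_n\}$, and the same reasoning allows the next edge $\{u_{n-2}, u_{n-1}\}$ to be contracted, and so on, until $U$ collapses to a single vertex after $n-1$ steps. The main obstacle is the bookkeeping in the case $u_{n-1} \in H$, where four alternatives must be juggled and one must correctly identify which pair of subsets of $U$ (possibly involving both $u_{n-1}$ and $u_n$) to feed into the strong Link Condition; the rest of the argument is a routine unraveling of the definition of contraction.
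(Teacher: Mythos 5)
Your proposal is correct and takes essentially the same approach as the paper: unravel the definition of the contraction, translate membership in $\cC_\Delta(\{u_{n-1},u_n\})$ back into membership in $\Delta$, split into cases according to where $u_{n-1}$ lives, and invoke the strong Link Condition of $\Delta$ (using the observation that it extends to non-disjoint subsets of $U$ when the matching sub-cases arise). The only cosmetic difference is in bookkeeping: the paper first records explicit formulas for $\link_{\Delta'}(F)$ depending on whether $u_{n-1}\in F$ and then splits on whether $u_{n-1}\in F\cup G$ (absorbing the $u_{n-1}\in H$ possibility into its Case~1), whereas you phrase the translation as a one-line membership criterion for $S\in\Delta'$ and make $u_{n-1}\in H$ its own top-level case with a four-way sub-split; both organizations cover exactly the same configurations and reach the same conclusion, and your handling of the ``mismatching'' sub-cases (feeding $F\cup\{u_{n-1}\}$ and $G\cup\{u_n\}$ into the strong Link Condition, then dropping $u_n$) is correct.
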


\begin{proof}
To simplify notation, we set $\Delta' = \cC_\Delta
(\{u_{n-1},u_n\})$ and let $U' = U \sm \{u_n\} \in 
\Delta'$ be the contraction of $U$. We 
consider disjoint sets $F, G \subseteq U'$ and 
observe that, by definition of $\Delta'$, 
\begin{eqnarray}
\link_{\Delta'}(F) & = &
\{H \in \link_\Delta(F)~:~u_n\notin H\} \, \cup
\label{eq:link1} \\
& & \{(H \sm \{u_n\}) \, \cup \,
\{u_{n-1}\}~:~u_n\in H\in\link_\Delta(F),\ u_{n-1} 
\notin H\} \nonumber
\end{eqnarray}
if $u_{n-1} \notin F$, and
\begin{eqnarray} 
\link_{\Delta'}(F) & = &  
\{H \in \link_\Delta(F)~:~u_n\notin H\} \, \cup
\label{eq:link2} \\ 
& & \{ H \sm \{u_{n-1}\}~:~ H \in 
\link_{\Delta}((F \sm \{u_{n-1}\})\cup\{u_n\}) \}
\nonumber \end{eqnarray}     
if $u_{n-1}\in F$. The inclusion 
\[ \link_{\Delta'}(F\cup G) \ \subseteq \ 
   \link_{\Delta'}(F) \cap \link_{\Delta'}(G) \]
holds trivially. To prove the reverse inclusion, we 
consider a face $H \in \link_{\Delta'}(F) \cap 
\link_{\Delta'}(G)$ and distinguish two cases.

{\sf Case 1:} $u_{n-1}\notin F\cup G$. By 
Equation~\eqref{eq:link1} for $\link_{\Delta'}(F)$ 
and $\link_{\Delta'}(G)$, four cases can occur. 
First, assume that $H \in \link_\Delta(F) \cap 
\link_\Delta(G)$. Then, by the strong Link Condition, 
$H\in \link_{\Delta}(F\cup G)$ and hence $H \in 
\link_{\Delta'}(F\cup G)$ by \eqref{eq:link1}. Next, 
suppose that $H = (H' \sm \{u_n\}) 
\cup \{u_{n-1}\}$ for some $H' \in \link_\Delta(F) 
\cap \link_\Delta(G)$ with $u_n \in H'$, $u_{n-1} 
\notin H$. Then, the strong Link Condition implies 
that $H' \in \link_\Delta(F\cup G)$ and hence $H \in 
\link_{\Delta'}(F\cup G)$ by \eqref{eq:link1}. 
Finally, assume that $H \in \link_\Delta(F)$ and 
that $H = (H' \sm \{u_n\}) \cup \{u_{n-1}\}$ for 
some $H' \in \link_\Delta(G)$ with $u_n \in H'$ and 
$u_{n-1} \notin H'$. Then, $F\cup H
\in \Delta$ and $G \cup (H \sm \{u_{n-1}\}) \cup 
\{u_n\} \in \Delta$. From  
the strong Link Condition, we conclude that $H \sm
\{u_{n-1}\} \in \link_\Delta(F\cup G\cup\{u_n\})$, 
i.e., $(H \sm \{u_{n-1}\}) \cup \{u_n\} \in 
\link_{\Delta}(F\cup G)$. This, together with  
\eqref{eq:link1} applied to $\link_{\Delta'}
(F\cup G)$, implies again that $H \in \link_{\Delta'}
(F\cup G)$. The remaining case follows by symmetry 
from the previous one. 

{\sf Case 2:} $u_{n-1}\in F\cup G$. Since $F \cap G
= \varnothing$, we may assume without loss of generality  
that $u_{n-1} \in F$ and $u_{n-1} \notin G$. Since 
$H \in \link_{\Delta'}(F)$ and $u_{n-1} \in F$, we 
must have $u_{n-1} \notin H$. From 
Equation~\eqref{eq:link1}, which applies to 
$\link_{\Delta'}(G)$, and the fact that $u_{n-1} \notin 
H$ we conclude that $H \in \link_\Delta(G)$. Two 
subcases can occur. Suppose first that $H \in 
\link_\Delta(F)$. Then, the strong Link Condition 
implies that $H \in \link_\Delta(F\cup G)$ and thus 
$H \in \link_{\Delta'}(F\cup G)$ by 
Equation~\eqref{eq:link2}, applied to 
$\link_{\Delta'}(F \cup G)$. Otherwise, $H \notin 
\link_\Delta(F)$ and we must have $H \in \link_{\Delta}
((F \sm \{u_{n-1}\}) \cup \{u_n\})$ by \eqref{eq:link2}. 
This implies that $H \cup (F \sm \{u_{n-1}\}) \cup 
\{u_n\} \in \Delta$. Since $H \cup G\in \Delta$, from 
the strong Link Condition we infer that $H \cup (F \sm 
\{u_{n-1}\}) \cup \{u_n\} \cup G \in \Delta$. Since 
$u_{n-1} \in F$, we conclude that $H \cup F \cup G \in 
\Delta'$ and so, once again, $H \in \link_{\Delta'}
(F\cup G)$. This completes the proof of the first 
statement.

For the second statement we note that if $\Delta$ 
satisfies the strong Link Condition with respect to 
$U$, then it also satisfies the Link Condition with 
respect to any edge of $2^U$. Hence, the claim follows 
from successive applications of the first statement. 
\end{proof}

\section{Lefschetz properties of antiprism 
triangulations}
\label{sec:aLefschetz}

This section aims to prove \Cref{thm:LefschetzUnimodal}, 
i.e., to show that the antiprism triangulation of any 
shellable simplicial complex has the almost strong 
Lefschetz property over $\RR$. From this we will 
infer that the $h$-vector of the antiprism 
triangulation of any Cohen--Macaulay simplicial 
complex is unimodal and will locate its peak. 

We first show that the antiprism triangulation of the 
simplex $\sigma_n$ has the almost strong Lefschetz 
property over $\RR$. The next lemma will be crucial. 
Recall that the (strong) Link Condition was defined in
Section~\ref{sec:Lefschetz}.

\begin{lemma}\label{lem:SLCAntiprism}
Consider an $(n-1)$-dimensional simplex $2^V$ and a
triangulation $\Delta$ of its boundary complex 
$\partial(2^V)$. Then, the antiprism 
$\Gamma_\aA(\Delta)$ satisfies the strong Link 
Condition with respect to the set of its interior 
vertices. 

In particular, $\sd_\aA(\sigma_n)$ satisfies the 
strong Link Condition with respect to the set of 
its interior vertices.
\end{lemma}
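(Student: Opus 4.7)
The plan is to reduce both sides of the strong Link Condition to the same set-theoretic conditions, via a single combinatorial membership criterion for faces of $\Gamma_\aA(\Delta)$. Write $W$ for the vertex set of $\Delta$, and for $G \in \Delta$ denote by $\sigma(G) \subseteq V$ its carrier as a face of the triangulation $\Delta$ of $\partial(2^V)$, i.e., the smallest face of $\partial(2^V)$ containing $G$ geometrically. First I would prove the criterion: a set $H \subseteq W \cup U$ is a face of $\Gamma_\aA(\Delta)$ if and only if $H \cap W \in \Delta$ and $\{v_i \, : \, u_i \in H\} \cap \sigma(H \cap W) = \varnothing$. This is a direct unpacking of the definition of the antiprism construction: the forward direction splits on whether $H \cap U$ is empty or not, while the reverse uses that, for $E \subseteq U$, the statement $G \in \Delta_{V \sm \{v_i : u_i \in E\}}$ is equivalent to $G \in \Delta$ together with $\sigma(G) \cap \{v_i : u_i \in E\} = \varnothing$.

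Since $\Gamma_\aA(\Delta)$ triangulates $2^V$ with boundary $\Delta$, its interior vertices form precisely the set $U$. Applying the criterion to $H \cup F$ for any $F \subseteq U$, and noting that $(H \cup F) \cap W = H \cap W$, I would deduce that $H \in \link_{\Gamma_\aA(\Delta)}(F)$ is equivalent to the conjunction of: (a) $H \cap F = \varnothing$; (b) $H \cap W \in \Delta$; and (c) $\bigl(\{v_i \, : \, u_i \in H\} \cup \{v_i \, : \, u_i \in F\}\bigr) \cap \sigma(H \cap W) = \varnothing$. For disjoint $F_1, F_2 \subseteq U$, the conjunction of (c) applied to $F_1$ and to $F_2$ is equivalent to the single condition that $\{v_i \, : \, u_i \in H \cup F_1 \cup F_2\}$ be disjoint from $\sigma(H \cap W)$, which is exactly (c) for $F_1 \cup F_2$; conditions (a) and (b) distribute over the union trivially. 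Therefore $\link(F_1) \cap \link(F_2) = \link(F_1 \cup F_2)$, proving the strong Link Condition.

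For the ``in particular'' statement, I would invoke the construction of $\sd_\aA(\sigma_n)$ by iterating the antiprism operation on faces in any inclusion-respecting order, as described in Section~\ref{sec:atriang}. Performing the antiprism construction on the top-dimensional face last yields $\sd_\aA(\sigma_n) = \Gamma_\aA(\sd_\aA(\partial \sigma_n))$, and the $n$ newly introduced vertices of this final step are exactly the interior vertices of $\sd_\aA(\sigma_n)$, i.e., the pointed faces $(V, v_1), \dots, (V, v_n)$ under the combinatorial description of Section~\ref{sec:atriang}. The first part of the lemma, applied with $\Delta$ replaced by $\sd_\aA(\partial \sigma_n)$, then finishes the proof. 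The only technical step in the whole argument is isolating the clean membership criterion for $\Gamma_\aA(\Delta)$; once that is in place, the link description and the distributivity check are mechanical, and I foresee no serious obstacle beyond carefully tracking carriers for vertices of $\Delta$ that need not literally belong to $V$.
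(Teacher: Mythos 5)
Your proof is correct and, while both approaches proceed by describing the link explicitly, you take a somewhat different route from the paper's. The paper identifies $\link_{\Gamma_\aA(\Delta)}(E)$ for nonempty $E \subseteq U$ as $\Delta_\aA(\Delta_{\bar F})$, where $\bar F$ is the face of $2^V$ complementary to $E$ and the new vertices of the $\Delta_\aA$-construction are those of $U$ complementary to $E$; the strong Link Condition then follows because passing to the disjoint union $E_1 \cup E_2$ corresponds to intersecting the complementary faces $\bar F_1 \cap \bar F_2$. You instead isolate a membership criterion, $H \in \Gamma_\aA(\Delta)$ iff $H \cap W \in \Delta$ and $\{v_i : u_i \in H\}$ avoids the carrier of $H \cap W$, which is correct (the key fact being that $G \in \Delta_F$ iff $G \in \Delta$ and $\sigma(G) \subseteq F$), and then observe that the resulting description of $\link(F)$ for $F \subseteq U$ is a conjunction of conditions that distribute over disjoint unions. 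Your version is slightly more elementary in that it bypasses the $\Delta_\aA$-construction and makes the distributivity completely mechanical; the paper's version yields a structural formula for the link that is arguably more informative but leaves the intersection-compatibility of $\Delta_\aA$ with restrictions implicit. Both handle the ``in particular'' statement identically via the identity $\sd_\aA(\sigma_n) = \Gamma_\aA(\sd_\aA(\partial\sigma_n))$.
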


\begin{proof}
Set $V = \{v_1, v_2,\dots,v_n\}$ and let $U = \{u_1,
u_2,\dots,u_n\}$ be the set of interior vertices of 
$\Gamma_\aA(\Delta)$, linearly ordered so that 
$\{u_i, v_i\} \not\in \Gamma_\aA(\Delta)$ for every 
$i \in [n]$. 

Let $E = \{u_i : i \in I\} \subseteq U$ for some 
$I \subseteq [n]$ be nonempty and let $\bar{E} = 
\{u_j : j \in [n] \sm I\}$ and $\bar{F} = \{v_j : 
j \in [n] \sm I\}$ be the faces of the simplices 
$2^U$ and $2^V$, respectively, which are 
complementary to $E$. Then, by definition of 
$\Gamma_\aA(\Delta)$, 
\begin{equation*}
\link_{\Gamma_\aA(\Delta)}(E) \ = \ \Delta_\aA 
(\Delta_{\bar{F}}),
\end{equation*}
where the new vertices added for the $\Delta_\aA$ 
construction (see Remark~\ref{rem:DeltaA}) are the 
elements of $\bar{E}$ and $\Delta_{\bar{F}}$ is the 
restriction of $\Delta$ to the (proper) face 
$\bar{F} \in 2^V$. This directly implies that 
$\Gamma_\aA(\Delta)$ satisfies the strong Link 
Condition with respect to $U$.
\end{proof}

Given a Cohen--Macaulay simplicial complex $\Delta$ 
over a field $\FF$, we say that the contraction of 
an edge $e \in 
\Delta$ is \emph{admissible over $\FF$} if $\Delta$ 
satisfies the Link Condition with respect to $e$ and 
$\link_{\Delta}(e)$ is strong Lefschetz over $\FF$. 
The following proposition is, essentially, a 
consequence of Lemmas~\ref{lem:LinkCondition} 
and~\ref{lem:SLCAntiprism}.
\begin{proposition} \label{prop:aASimplexToBoundary}
There exists a sequence of admissible edge contractions 
over $\RR$ which transforms $\sd_\aA(\sigma_n)$ into 
the cone over its boundary. In particular, 
$\sd_\aA(\sigma_n)$ is almost strong Lefschetz over 
$\RR$, if $\partial(\sd_\aA(\sigma_{n}))$ is strong 
Lefschetz over $\RR$.
\end{proposition}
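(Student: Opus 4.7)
The plan is to use Lemmas~\ref{lem:SLCAntiprism} and~\ref{lem:LinkCondition} to produce the desired contraction sequence, identify its endpoint as the cone over the boundary, verify admissibility by induction on $n$, and finally derive the Lefschetz consequence by applying Proposition~\ref{prop:LinkCondition} in reverse.

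By Lemma~\ref{lem:SLCAntiprism}, $\sd_\aA(\sigma_n)$ satisfies the strong Link Condition with respect to its set $U = \{u_1, \dots, u_n\}$ of interior vertices, and Lemma~\ref{lem:LinkCondition} then supplies a sequence of $n-1$ edge contractions on $2^U$ that preserves the Link Condition at every stage and eventually identifies all of $U$ to a single vertex $u$. I would next verify that the final complex is $u \ast \partial\sd_\aA(\sigma_n)$: faces disjoint from $U$ lie in $\partial\sd_\aA(\sigma_n) = \sd_\aA(\partial\sigma_n)$ and are unaffected by the contractions, while every face meeting $U$ becomes a face containing $u$ after identification, so that the contracted complex is exactly the cone over the boundary.

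For admissibility, I would use the description $\link_{\Gamma_\aA(\Delta)}(E) = \Delta_\aA(\Delta_{\bar F})$ from the proof of Lemma~\ref{lem:SLCAntiprism} to identify the link of the first contracted edge $\{u_{n-1}, u_n\}$ inside $\sd_\aA(\sigma_n)$ with the flag $(n-3)$-sphere $\Delta_\aA(\sd_\aA(\sigma_{n-2}))$ (compare Remarks~\ref{rem:DeltaA} and~\ref{rem}), and to obtain an analogous description of the link of each subsequent contracted edge inside the corresponding partially contracted complex, in which the fused vertex plays the role of $u_n$. An induction on $n$, together with the hypothesis that the boundaries of $\sd_\aA(\sigma_k)$ are strong Lefschetz in the relevant smaller dimensions, then supplies strong Lefschetz over $\RR$ of each of these links, making every contraction admissible.

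Finally, under the hypothesis that $\partial\sd_\aA(\sigma_n)$ is strong Lefschetz over $\RR$, the cone $u \ast \partial\sd_\aA(\sigma_n)$ is almost strong Lefschetz, since coning does not change the Artinian reduction of the Stanley--Reisner ring. Running the sequence of contractions in reverse and applying Proposition~\ref{prop:LinkCondition} at each step, with the required strong Lefschetz of the link of the uncontracted edge furnished by admissibility, the almost strong Lefschetz property propagates back from the cone to $\sd_\aA(\sigma_n)$. The main obstacle is the third step: after partial contractions the ambient complex is no longer literally an antiprism triangulation, so the precise form of the links must be tracked carefully, and the induction must be set up so that the required Lefschetz information is already in hand at each intermediate stage before it is needed.
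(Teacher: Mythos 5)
Your steps (1), (2), and (4) — invoking Lemmas~\ref{lem:SLCAntiprism} and~\ref{lem:LinkCondition} to build the contraction sequence, identifying the endpoint as the cone $u \ast \partial(\sd_\aA(\sigma_n))$, and pushing the almost strong Lefschetz property back through Proposition~\ref{prop:LinkCondition} — all match the paper's argument. The gap is exactly where you flag it, in the verification of admissibility, and it is a real gap rather than a technicality.

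Two things go wrong with the plan as stated. First, Proposition~\ref{prop:LinkCondition} requires the contraction at each stage to be Cohen--Macaulay (of the right dimension), and your proposal never addresses this; after the first contraction you are no longer looking at a Stanley--Reisner ring you have any a priori control over. Second, even for the very first edge $\{u_{n-1},u_n\}$, identifying $\link_{\sd_\aA(\sigma_n)}(\{u_{n-1},u_n\}) \cong \Delta_\aA(\sd_\aA(\sigma_{n-2}))$ only helps if you already know that this $(n-3)$-sphere is strong Lefschetz, and that fact is not supplied by an induction on $n$ in any obvious way (it is a sphere of the $\Delta_\aA$ type from Remark~\ref{rem:DeltaA}, not itself $\partial(\sd_\aA(\sigma_k))$). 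For the later edges the link descriptions degrade further, as you note, and the bookkeeping you would need has no clear closed form.

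The paper sidesteps all of this with a single geometric observation: because $\sd_\aA(\sigma_n)$ arises from $\sigma_n$ by crossing operations, and because the edge contractions on $U$ can be performed first inside the antiprism $\Gamma_\aA(\partial(2^V))$ and the crossing operations on boundary faces applied afterward, every intermediate complex in the contraction sequence is a \emph{regular} triangulation of $2^V$ (crossing operations are sequences of stellar subdivisions, which preserve regularity by \cite{BM87}). Regularity gives Cohen--Macaulayness of the intermediate complexes immediately, and it gives that the link of any interior edge to be contracted is a polytopal sphere, hence strong Lefschetz over $\RR$ by \cite{Sta80}. That is the missing ingredient: rather than tracking explicit combinatorial descriptions of the links and setting up an induction, replace the need for that information by a regularity argument, and let the $g$-theorem hand you the Lefschetz property of the links for free. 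If you want to keep a proof along your lines, you would need to show directly that $\Delta_\aA(\sd_\aA(\sigma_{n-2}))$ and the analogous links at later stages are strong Lefschetz, but the cleanest route to that is precisely to show they are polytopal, which brings you back to the paper's argument.
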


\begin{proof}
As before, we let $V=\{v_1,\ldots,v_n\}$ and $U =
\{u_1,\ldots,u_n\}$ be the vertices of $\sigma_n$ and 
the interior vertices of $\sd_\aA(\sigma_n)$, 
respectively. By \Cref{lem:SLCAntiprism}, $\sd_\aA
(\sigma_n)$ satisfies the strong Link Condition with 
respect to $U$. Thus, using \Cref{lem:LinkCondition}, 
we can successively contract edges from $U$, each 
satisfying the Link Condition, until we reach a single 
vertex $u$. The resulting complex is clearly the cone 
$u \ast \partial(\sd_\aA(\sigma_n))$. If we can verify 
that the intermediate complexes, appearing in this 
sequence of contractions, are Cohen--Macaulay over 
$\RR$ and that the links of the contracted edges are 
strong Lefschetz, then \Cref{prop:LinkCondition} 
implies that $\sd_\aA(\sigma_n)$ is almost strong 
Lefschetz, if so is the cone 
$u \ast \partial(\sd_\aA(\sigma_n))$. 

To prove the missing statements, we use the fact that 
$\sd_\aA(\sigma_n)$ can be constructed from $\sigma_n$ 
by crossing operations on its faces, starting at the 
facet $V$ and moving to faces of lower dimension (see 
\Cref{sec:atriang}). From this it follows that the 
intermediate complexes can be constructed by first 
contracting the corresponding edges in the antiprism 
$\Gamma_\aA(\partial(2^V))$ and then performing 
crossing operations on its boundary faces. The 
antiprism $\Gamma_\aA(\partial(2^V))$ is a regular 
triangulation of $2^V$ and so is any subcomplex 
obtained from it by the performed edge contractions. 
Since, in addition, any crossing operation can be 
realized by a sequence of stellar subdivisions 
(see the proof of \cite[Theorem~8]{BM87}), which are 
well known to preserve regularity, we conclude that 
any intermediate complex in the sequence of edge 
contractions from $\sd_\aA(\sigma_n)$ to the cone 
$u \ast \partial(\sd_\aA(\sigma_n))$ is a regular 
triangulation of $2^V$ and, in particular, 
Cohen--Macaulay over $\RR$. Moreover, the regularity 
of the intermediate complexes implies that the link 
of any interior edge that is contracted is a 
polytopal sphere and hence strong Lefschetz over 
$\RR$ \cite{Sta80}. Using  
\Cref{prop:LinkCondition}, we conclude that 
$\sd_\aA(\sigma_n)$ is almost strong Lefschetz over 
$\RR$, if so is $u \ast \partial(\sd_\aA(\sigma_n))$. 
By \cite[Lemma 2.1]{KN09}, this is the case if 
$\partial(\sd_\aA(\sigma_n))$ is strong Lefschetz 
over $\RR$.
\end{proof}

The next statement suffices to conclude that 
$\sd_\aA(\sigma_n)$ has the almost strong Lefschetz 
property over $\RR$.

\begin{proposition}\label{prop:SLPSigma}
The simplicial complex $\partial(\sd_\aA(\sigma_n))$ 
is combinatorially isomorphic to the boundary complex
of a simplicial polytope. In particular, it is strong 
Lefschetz over $\RR$.
\end{proposition}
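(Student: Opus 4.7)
The plan is to exhibit $\partial(\sd_\aA(\sigma_n))$ as the boundary complex of a simplicial polytope by constructing it from $\partial\sigma_n$ via a sequence of stellar subdivisions. The first step is the combinatorial identification $\partial(\sd_\aA(\sigma_n)) = \sd_\aA(\partial\sigma_n)$, which follows from the fact that, as described in \Cref{sec:atriang}, the carrier in $\sigma_n$ of the face of $\sd_\aA(\sigma_n)$ corresponding to a multi-pointed ordered partition of a set $S \subseteq V$ is precisely $S$; hence the boundary faces are exactly those with $S \subsetneq V$, and their collection is visibly isomorphic to the antiprism triangulation of $\partial\sigma_n$.

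For the second step, I would recall from \Cref{sec:atriang} the construction of $\sd_\aA(\partial\sigma_n)$ from $\partial\sigma_n$ by successively applying the crossing operation to its faces, starting from the facets of $\partial\sigma_n$ (the $(n-2)$-faces of $\sigma_n$) and proceeding to faces of lower dimension in any order compatible with reverse inclusion. By \cite[Theorem~8]{BM87}, each such crossing operation can be realized as a finite sequence of stellar subdivisions. It is a standard fact that the stellar subdivision of the boundary complex of a simplicial polytope at any face yields again the boundary complex of a simplicial polytope (geometrically, by placing a new vertex slightly beyond the barycenter of that face, in a direction transversal to the affine hull of the face). Starting from $\partial\sigma_n$, which is the boundary complex of a geometric simplex, and inductively applying this fact along the entire sequence of stellar subdivisions produced by all the crossing operations, we conclude that $\sd_\aA(\partial\sigma_n)$ is combinatorially isomorphic to the boundary of a simplicial polytope. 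The ``In particular'' clause then follows at once from Stanley's theorem \cite{Sta80}, which guarantees the strong Lefschetz property over $\RR$ for boundary complexes of simplicial polytopes.

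The principal obstacle is the bookkeeping required to confirm that the specific sequence of stellar subdivisions supplied by \cite[Theorem~8]{BM87}, when executed in the prescribed order over all positive-dimensional faces of $\partial\sigma_n$, produces a complex combinatorially isomorphic to $\sd_\aA(\partial\sigma_n)$ rather than some other subdivision. This reduces to an inductive comparison of the combinatorial effect of a single crossing operation on a face $F$ with the cumulative effect of the stellar subdivisions realizing it, which is precisely what is established in \cite{BM87}. The remaining ingredients---preservation of polytopality under stellar subdivision at an arbitrary face and the strong Lefschetz property for simplicial polytope boundaries---are classical and require no new argument here.
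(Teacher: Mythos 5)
Your proof follows essentially the same approach as the paper's: identify $\partial(\sd_\aA(\sigma_n))$ as the result of applying crossing operations to $\partial\sigma_n$, invoke \cite[Theorem~8]{BM87} to decompose each crossing operation into stellar subdivisions, observe that stellar subdivisions preserve polytopality, and conclude with \cite{Sta80}. Your explicit preliminary identification of $\partial(\sd_\aA(\sigma_n))$ with $\sd_\aA(\partial\sigma_n)$ via the carrier description is a reasonable (and correct) elaboration of a step the paper leaves implicit.
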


\begin{proof}
We use again the fact that 
$\partial(\sd_\aA(\sigma_n))$ can be constructed from 
$\partial \sigma_n$ by a sequence of crossing operations 
(see \Cref{sec:atriang}). As already mentioned, it was 
shown in the proof of Theorem 8 in \cite{BM87} that 
every crossing operation can be expressed as a sequence 
of stellar subdivisions. Since those preserve 
polytopality, the first statement follows. The second 
follows from the first and \cite{Sta80}.
\end{proof}

The next result follows by combining 
Propositions~\ref{prop:aASimplexToBoundary} 
and~\ref{prop:SLPSigma}.

\begin{theorem} \label{thm:SimplexLefschetz}
The simplicial complex $\sd_\aA(\sigma_n)$ is almost 
strong Lefschetz over $\RR$.
\end{theorem}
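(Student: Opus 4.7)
The plan is to deduce the theorem immediately from the two preceding propositions, since all the genuine work has already been carried out in them. First, I would invoke Proposition~\ref{prop:SLPSigma} to see that $\partial(\sd_\aA(\sigma_n))$ is combinatorially isomorphic to the boundary complex of a simplicial polytope. Then Stanley's classical theorem \cite{Sta80} on the strong Lefschetz property for boundary complexes of simplicial polytopes (over $\RR$) yields that $\partial(\sd_\aA(\sigma_n))$ is strong Lefschetz over $\RR$. With this hypothesis in hand, Proposition~\ref{prop:aASimplexToBoundary} delivers the conclusion: it asserts precisely that $\sd_\aA(\sigma_n)$ is almost strong Lefschetz over $\RR$ whenever its boundary is strong Lefschetz over $\RR$.

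In other words, the proof amounts to a two-line chaining: polytopality of the boundary $\Longrightarrow$ strong Lefschetz for the boundary (by \cite{Sta80}) $\Longrightarrow$ almost strong Lefschetz for $\sd_\aA(\sigma_n)$ (by Proposition~\ref{prop:aASimplexToBoundary}). There is no additional argument to produce at this stage.

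The real obstacles were surmounted earlier. The first was verifying that one can contract edges one-by-one from $\sd_\aA(\sigma_n)$ down to a cone over its boundary while preserving the hypotheses needed for the Lefschetz-transfer statement of Proposition~\ref{prop:LinkCondition}; this required the strong Link Condition with respect to the interior vertices (Lemma~\ref{lem:SLCAntiprism}), an inductive reduction to the ordinary Link Condition (Lemma~\ref{lem:LinkCondition}), and the observation that the intermediate complexes remain regular triangulations of $2^V$ (so that every link that is contracted is a polytopal sphere, hence strong Lefschetz over $\RR$). The second obstacle was showing that $\partial(\sd_\aA(\sigma_n))$ is polytopal, which was handled by expressing each crossing operation as a sequence of stellar subdivisions following \cite{BM87} and invoking the fact that stellar subdivisions preserve polytopality. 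With both of these in place, the theorem is simply the combination.
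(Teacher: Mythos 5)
Your proof is correct and follows exactly the paper's approach: the theorem is obtained by combining Proposition~\ref{prop:SLPSigma} (polytopality, hence strong Lefschetz via \cite{Sta80}) with Proposition~\ref{prop:aASimplexToBoundary}. Your summary of where the substantive work lies is also accurate.
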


\begin{remark} \rm
Since the restriction of the antiprism triangulation 
$\sd_\aA(\sigma_n)$ to a face $F$ of $\sigma_n$ is 
the antiprism triangulation of $2^F$, we can apply 
the edge contractions from the proof of 
\Cref{prop:aASimplexToBoundary} to the subdivided 
faces of $\partial(\sd_\aA(\sigma_n))$, ordered by 
decreasing dimension. Clearly, the simplicial 
complex obtained in this way is combinatorially 
isomorphic to the barycentric subdivision of 
$\partial\sigma_n$. Using similar arguments as in 
the proof of \Cref{prop:aASimplexToBoundary}, one 
can show that all edge contractions are admissible. 
Indeed, let $\Delta'$ be the simplicial complex 
obtained from $\partial(\sd_\aA(\sigma_n))$ after 
$i$ edge contractions. Consider a face $F \in 
\sigma_n$ and let $\Delta'_F$ be the restriction 
of $\Delta'$ to $F$ (which is the subcomplex of 
$\Delta'$ consisting of all faces with carrier 
contained in $F$). If $G \in \Delta'_F$ is 
a face having all vertices in the interior of 
$\Delta'_F$, then it can easily be verified that 
\begin{equation}\label{eq:link}
\link_{\Delta'}(G) \ = \ \link_{\Delta'_F}(G) 
\ast \{\{u_{H_1},\ldots,u_{H_r}\}~:~F\subsetneq 
H_1\subsetneq \cdots \subsetneq H_r\subsetneq [n]\},
\end{equation}
where $u_H$ denotes the last interior vertex in the 
sequence of contractions of a face $H \in \sigma_n$. 
If $G$ is the edge to be contracted in $\Delta'$, 
then the previous equation, combined with the proof 
of \Cref{prop:aASimplexToBoundary}, implies that 
$\Delta'$ satisfies the Link Condition with respect
to $G$. We further note that the second complex on 
the right-hand side of \eqref{eq:link} is isomorphic 
to the barycentric subdivision of 
$\link_{\partial\sigma_n}(F)$, which itself is the 
barycentric subdivision of the boundary complex of 
an $(n-2-|F|)$-dimensional simplex. Thus, by \cite[Proposition 2.3]{KN09} and the 
proof of Proposition~\ref{prop:aASimplexToBoundary}, both simplicial 
complexes on the right-hand side of \eqref{eq:link} 
are strong Lefschetz over $\RR$. From this fact and
\cite[Theorem 1.2 (i)]{BN10}, it follows that 
$\link_{\Delta'}(G)$ is strong Lefschetz over $\RR$. 

The previous discussion shows that there exists a 
sequence of admissible edge contractions transforming 
$\partial(\sd_\aA(\sigma_n))$ into 
$\partial(\sd(\sigma_n))$. This provides another 
proof of the second statement of \Cref{prop:SLPSigma}. 
Moreover, the analogous edge contraction can be applied 
to the edge links. This shows that the antiprism 
triangulation of $\partial \sigma_n$ is strongly edge 
decomposable (see \cite[Definition 1.1]{Mur10}), since 
so is the barycentric subdivision of $\partial \sigma_n$. 
\qed
\end{remark}

With \Cref{prop:SLPSigma} at hand, the key 
observation to complete the proof of the first 
statement of \Cref{thm:LefschetzUnimodal} is that 
the proof of \cite[Theorem 1.1]{KN09}, showing that 
the barycentric subdivision of any shellable simplicial 
complex is almost strong Lefschetz over an infinite 
field (in particular, over $\RR$), works for every
uniform triangulation which fulfills this property 
for simplices. For the interested reader, and to 
keep this article as self-contained as possible, 
we provide a sketch of the proof.

\begin{theorem} \label{thm:LefschetzShellable}
The complex $\sd_\aA(\Delta)$ is almost strong 
Lefschetz over $\RR$ for every shellable simplicial 
complex $\Delta$.
\end{theorem}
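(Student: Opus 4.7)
The plan is to proceed by induction on the number $m$ of facets in a shelling $F_1, F_2, \ldots, F_m$ of $\Delta$, following the template of the proof of \cite[Theorem~1.1]{KN09} for the barycentric subdivision. The base case $m = 1$ reduces to the statement that $\sd_\aA(\sigma_n)$ is almost strong Lefschetz over $\RR$, which is precisely the combination of Propositions~\ref{prop:aASimplexToBoundary} and~\ref{prop:SLPSigma} already established. The essential point is that the KN09 argument is formal: the only property it uses of barycentric subdivision, beyond general facts about Cohen--Macaulay rings over infinite fields, is (i) that $\sd$ commutes with the natural gluing operation $\Delta_{i-1} \cup \langle F_i\rangle$ at each shelling step and (ii) that $\sd(\sigma_n)$ is almost strong Lefschetz. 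Both properties are shared by $\sd_\aA$, the latter being exactly what we have just proved.

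For the inductive step, write $\Delta_i = \langle F_1, \ldots, F_i \rangle$, let $R_i \subseteq F_i$ be the restriction face at step $i$ of the shelling, and record the decomposition $\sd_\aA(\Delta_i) = \sd_\aA(\Delta_{i-1}) \cup \sd_\aA(\langle F_i \rangle)$. Since $\sd_\aA$ is a uniform triangulation defined locally on faces, one has $\sd_\aA(\Delta_{i-1}) \cap \sd_\aA(\langle F_i \rangle) = \sd_\aA(\Delta_{i-1} \cap \langle F_i \rangle)$, which is the antiprism triangulation of a shellable subcomplex of $\partial(2^{F_i})$. This decomposition yields a short exact sequence of graded $\RR[x_v : v \in V]$-modules among the Stanley--Reisner rings of the three complexes involved. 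After modding out by a sufficiently generic common linear system of parameters $\Theta$ and a sufficiently generic linear form $\omega$, one applies the snake lemma to the commutative diagram of multiplication-by-$\omega^{n-1-2j}$ maps in complementary degrees, exactly as in \cite[Section~3]{KN09}. The induction hypothesis handles the outer term $\FF[\sd_\aA(\Delta_{i-1})]/\Theta$, the base case handles the outer term coming from $\sd_\aA(\langle F_i \rangle)$, and the injectivity in the middle follows.

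The main obstacle is the verification that a single choice of $(\Theta, \omega)$ can be made generic enough to simultaneously witness the almost strong Lefschetz property on all three complexes in the short exact sequence; this is a Zariski-open condition on the parameter space of pairs $(\Theta, \omega)$, and it is nonempty because each individual condition is nonempty over the infinite field $\RR$ by the inductive hypothesis, the base case, and (for the intersection term) a parallel sub-induction on the shelling of $\Delta_{i-1} \cap \langle F_i \rangle \subseteq \partial(2^{F_i})$, started from $\partial(\sd_\aA(\sigma_n))$ which is strong Lefschetz by Proposition~\ref{prop:SLPSigma}. Once these three genericity conditions are shown to intersect, the snake-lemma argument produces the desired Lefschetz element for $\sd_\aA(\Delta_i)$ and closes the induction.
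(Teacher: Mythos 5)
Your proposal follows the same route as the paper: facet induction along the shelling, the Mayer--Vietoris short exact sequence relating $\RR[\sd_\aA(\Delta)]$ to $\RR[\sd_\aA(\widetilde\Delta)]\oplus\RR[\sd_\aA(2^G)]$ and $\RR[\sd_\aA(\tau)]$, a generic simultaneous l.s.o.p., and a commutative-diagram argument. Two points, however, deserve attention.

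First, after dividing by $\Theta$ the Mayer--Vietoris sequence is no longer short exact: it becomes the four-term sequence
\begin{equation*}
\Tor_1(\RR[\sd_\aA(\tau)],A/\Theta)\;\to\;\RR(\sd_\aA(\Delta))\;\to\;\RR(\sd_\aA(\widetilde\Delta))\oplus\RR(\sd_\aA(2^G))\;\to\;\RR(\sd_\aA(\tau))\;\to\;0,
\end{equation*}
and the diagram-chase for injectivity of the middle multiplication maps only goes through once you know that $\Tor_1(\RR[\sd_\aA(\tau)],A/\Theta)_i=0$ for $0\le i\le\lfloor(n-2)/2\rfloor$. This vanishing is a genuine step (handled in \cite[Theorem~1.1]{KN09}, and relying on the fact that $\tau$ is $(n-2)$-dimensional and Cohen--Macaulay while $\theta_1,\dots,\theta_{n-1}$ is an l.s.o.p.\ for it); your write-up presents the quotient sequence as if it were still short exact, and you should make this point explicit.

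Second, your aside about the intersection term --- a ``parallel sub-induction \dots started from $\partial(\sd_\aA(\sigma_n))$'' --- is confused. The intersection $\tau=\widetilde\Delta\cap 2^G$ is a shellable complex of dimension $n-2$, a proper subcomplex of $\partial(2^G)$, so $\sd_\aA(\tau)$ is in general not $\partial(\sd_\aA(\sigma_n))$. It is handled most naturally by running the induction simultaneously on dimension (as the paper does: double induction on $n$ and the facet count), not by a separate induction anchored at the boundary sphere. The genericity of a simultaneous $(\Theta,\omega)$ is, as you say, automatic over $\RR$ since all the required conditions are nonempty Zariski-open conditions on an irreducible affine parameter space.
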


\begin{proof}
Let $\dim(\Delta) = n-1$, as usual. The proof 
proceeds by double induction on $n$ and the number 
of facets of $\Delta$. At the  
base of the induction, either $\Delta$ consists 
only of vertices, in which case there is nothing 
to show, or $\Delta$ is a simplex, in which case 
the result follows from \Cref{prop:SLPSigma}.

For the inductive step we assume that $n \ge 2$, 
let $V$ be the vertex set of $\Delta$ and let $A 
= \RR[x_v : v\in V]$. Consider a shelling $G_1, 
G_2,\dots,G_m = G$ of $\Delta$ and set 
$\widetilde{\Delta}:=\langle G_1,\dots,G_{m-1}
\rangle$ and $\tau := \widetilde{\Delta}\cap 2^G$. 
There is the following exact sequence of $A$-modules:
\begin{equation} \label{eq:MV-seq}
0 \rightarrow \RR[\sd_\aA(\Delta)] \rightarrow 
\RR[\sd_\aA(\widetilde{\Delta})] \oplus 
\RR[\sd_\aA(2^G)] \rightarrow 
\RR[\sd_\aA(\tau)]\rightarrow 0.
\end{equation}
%

One now chooses generic linear forms $\Theta = 
\theta_1, \theta_2,\dots,\theta_n$ so that $\Theta$ 
is an l.s.o.p. for $\RR[\sd_\aA(\Delta)]$, 
$\RR[\sd_\aA(\widetilde{\Delta})]$ and 
$\RR[\sd_\aA(2^G)]$ simultaneously and $\theta_1, 
\theta_2,\dots,\theta_{n-1}$ is an l.s.o.p. for 
$\RR[\sd_\aA(\tau)]$. Dividing out by $\Theta$ in 
\eqref{eq:MV-seq} gives rise to an exact sequence
\begin{eqnarray*}
\Tor_1(\RR[\sd_\aA(\tau)],A/\Theta)\stackrel{\delta}{\rightarrow} \RR(\sd_\aA(\Delta))\rightarrow \RR(\sd_\aA(\widetilde{\Delta}))\oplus \RR(\sd_\aA(2^G))\rightarrow \RR(\sd_\aA(\tau))\rightarrow 0,
\end{eqnarray*}
where we have written $\RR(\sd_\aA(\Delta))$ for 
$\RR[\sd_\aA(\Delta)]/\Theta$ and similarly for 
$\RR(\sd_\aA(\widetilde{\Delta}))$, 
$\RR(\sd_\aA(\tau))$ and $\RR(\sd_\aA(2^G))$. Next, 
one shows 
that $\Tor_1(\RR[\sd_\aA(\tau)],A/\Theta)_i=0$ for 
$0 \leq i \leq \lfloor \frac{n-2}{2}\rfloor$. This 
is done exactly as in the proof of 
\cite[Theorem 1.1]{KN09}. Since all maps in the 
previous exact sequence preserve the grading, one 
gets the commutative diagram
$$\begin{array}{ccccccc}
    \RR(\sd_\aA(\Delta))_i &\to& \RR(\sd_\aA(\widetilde{\Delta}))_i\oplus \RR(\sd_\aA(2^G))_i\\
     & & & & & &\\
     \downarrow \omega^{n-2i-1}& &\ \ \downarrow (\omega^{n-2i-1},\omega^{n-2i-1})& & \\
      & & & & & &\\
       \RR(\sd_\aA(\Delta))_{n-1-i} &\to& \RR(\sd_\aA(\widetilde{\Delta}))_{n-1-i}\oplus \RR(\sd_\aA(2^G))_{n-1-i}
   \end{array}$$   
where $\omega$ is a degree one element in $A$. The 
induction hypothesis implies that the multiplication 
map on the right-hand side is injective for a 
generic $\omega$. One concludes that the 
multiplication
  \begin{equation*} 
	\omega^{n-2i-1}:\hspace{5pt} \RR(\sd_\aA(\Delta))_i\rightarrow \RR(\sd_\aA(\Delta))_{n-1-i}
	\end{equation*}
is injective for $0 \leq i\leq \lfloor\frac{n-2}{2}
\rfloor$ and the proof follows.
\end{proof}

To complete the proof of \Cref{thm:LefschetzUnimodal}, 
we need the following properties of the numbers 
$p_\aA(n,k,j)$, discussed in Section~\ref{sec:transform}.

\begin{lemma}\label{lemma:propertiesTrafo}
Let $n,k,j\in \NN$ with $k,j\leq n$. 
\begin{itemize}
\item[(a)] $p_\aA(n,k,j)=p_\aA(n,n-k,n-j)$,
\item[(b)] \begin{equation*}
p_\aA(n,k,0) \, \le \, p_\aA(n,k,1) \, \le \cdots \le 
\, p_\aA(n,k,\lfloor n/2\rfloor)
\end{equation*}
and
\begin{equation*}
p_\aA(n,k,n) \, \le \, p_\aA(n,k,n-1) \, \le \cdots 
\le \, p_\aA(n,k,\lceil n/2\rceil).
\end{equation*}
\end{itemize}
\end{lemma}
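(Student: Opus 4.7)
My plan is as follows.

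Part (a) will follow from the explicit formula for $p_\aA(n,k,j)$ derived in the alternative proof of \Cref{prop:h-transform},
$$
p_\aA(n,k,j) \;=\; \sum_{r=0}^{n}\,\sum_{i}\, \binom{k}{i}\binom{n-k}{n-r-i}\,\ell_\aA(r,j-i),
$$
combined with the symmetry $\ell_\aA(r,s)=\ell_\aA(r,r-s)$ of the local $h$-polynomial, a special case of Stanley's general result recalled in \Cref{sec:triang}. Writing the analogous formula for $p_\aA(n,n-k,n-j)$ and then performing the substitution $i \mapsto i' := n-r-i$ swaps the binomial factors $\binom{n-k}{i}\binom{k}{n-r-i}$ into $\binom{n-k}{n-r-i'}\binom{k}{i'}$, while $\ell_\aA(r,n-j-i)$ becomes $\ell_\aA(r,r-j+i')$, which equals $\ell_\aA(r,j-i')$ by the aforementioned symmetry. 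Relabelling yields exactly the formula for $p_\aA(n,k,j)$.

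For part (b), I first observe that, thanks to part (a), the two proposed monotonicity chains are equivalent: the first chain for parameter $k$ is transformed into the second chain for parameter $n-k$ by the reindexing $j \mapsto n-j$. Hence it suffices to prove
$$
p_\aA(n,k,j) \le p_\aA(n,k,j+1) \quad \text{for every } 0 \le k \le n \text{ and every } j < n/2.
$$
The plan is induction on $n$. The base cases $n \le 2$ are trivial, and the case $k=0$ for general $n$ is provided by the real-rootedness (and hence unimodality) of $h_\aA(\sigma_n,x) = \sum_j p_\aA(n,0,j)\,x^j$ from \Cref{thm:h-sdA(simplex)-roots}; its peak sits at $\lfloor n/2\rfloor$ (or $\lceil n/2\rceil$) by inspection of the low-degree data together with the nonnegative symmetric decomposition with respect to $n-1$ provided by \Cref{prop:A(uDelta)}. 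For the inductive step I would invoke the recurrence \eqref{eq:recurrence} in the rearranged form
$$
p_\aA(n,k,j) - p_\aA(n,k-1,j) \;=\; p_\aA(n-1,k-1,j-1) - p_\aA(n-1,k-1,j),
$$
and express the consecutive difference $p_\aA(n,k,j+1) - p_\aA(n,k,j)$ as a combination of first and second differences of the inductively controlled sequences $\bigl(p_\aA(n,k-1,\cdot)\bigr)$ and $\bigl(p_\aA(n-1,k-1,\cdot)\bigr)$.

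The main obstacle is that this recurrence combines a monotone-increasing increment coming from the $k-1$ case at level $n$ with signed first differences coming from the $(n-1)$-level sequence, so bare unimodality of the inductive inputs does not close the argument. My first attempt will be to strengthen the inductive hypothesis to \emph{log-concavity} of $(p_\aA(n,k,j))_j$ with peak at the indicated position, since log-concavity (unlike mere unimodality) is stable under the signed combinations produced by the recurrence. Should that strengthening prove elusive, the fallback is to work with the second combinatorial interpretation of $p_\aA(n,k,j)$ from \Cref{prop:h-transform} and construct an explicit injection from ordered set partitions of $[n]$ enumerated by $p_\aA(n,k,j)$ into those enumerated by $p_\aA(n,k,j+1)$ (for $j<n/2$), modelled on swapping an element between a left-half block and a right-half block; the delicate point is to treat uniformly the asymmetry between the even-$m$ and odd-$m$ cases introduced by the distinguished role of $[k]$ in the last block.
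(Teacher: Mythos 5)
Part (a) of your proposal is correct and is a legitimate alternative to the paper's argument. Starting from the formula $p_\aA(n,k,j)=\sum_{r,i}\binom{k}{i}\binom{n-k}{n-r-i}\ell_\aA(r,j-i)$, the substitution $i\mapsto n-r-i$ together with the symmetry $\ell_\aA(r,s)=\ell_\aA(r,r-s)$ does give the identity $p_\aA(n,n-k,n-j)=p_\aA(n,k,j)$. The paper instead provides an explicit bijection between the sets $Q(n,k,j)$ and $\widetilde{Q}(n,n-k,n-j)$ (and also notes that (a) is a special case of \cite[Proposition 4.6 (a)]{Ath20+}); both approaches are fine, and yours is arguably the more streamlined derivation.

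Part (b), however, has two problems. First, the reduction is misstated: after using (a) to turn the second chain into the first, what you need is $p_\aA(n,k,j)\le p_\aA(n,k,j+1)$ for $0\le j\le\lfloor n/2\rfloor-1$, not for all $j<n/2$. For $n$ odd the latter includes $j=(n-1)/2$, and then the inequality asserts $p_\aA(n,k,(n-1)/2)\le p_\aA(n,k,(n+1)/2)$, which is not part of either chain and is in fact false: from the displayed data, $p_\aA(5,0,2)=310>150=p_\aA(5,0,3)$. So you have set yourself the task of proving a false statement, and any attempt to close the induction as written must fail.

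Second, and more seriously, the inductive step is a plan rather than a proof, and neither of your two candidate strengthenings is shown to work. Rearranging the recurrence \eqref{eq:recurrence} gives
\begin{equation*}
p_\aA(n,k,j+1)-p_\aA(n,k,j)
=\bigl(p_\aA(n,k-1,j+1)-p_\aA(n,k-1,j)\bigr)-\bigl(p_\aA(n-1,k-1,j+1)-2p_\aA(n-1,k-1,j)+p_\aA(n-1,k-1,j-1)\bigr),
\end{equation*}
so you must dominate a \emph{second} difference of the $(n-1)$-level sequence by a \emph{first} difference of the $n$-level sequence. Log-concavity controls ratios $p_j^2\ge p_{j-1}p_{j+1}$, not the sign or size of $p_{j+1}-2p_j+p_{j-1}$, so strengthening the inductive hypothesis to log-concavity does not resolve this; you would need a genuine concavity-type bound, which you have not established and which is not readily available. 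The injection fallback is not constructed and you yourself flag the $m$-parity asymmetry as a delicate point. In contrast, the paper dispatches (b) by observing that the proof of \cite[Corollary~4.4]{KN09} carries over verbatim once the symmetry of \cite[Lemma~2.5]{BW08} is replaced by part (a); that argument rests on the Lefschetz-theoretic machinery developed for (barycentric or antiprism) subdivisions of shellable complexes, an ingredient entirely absent from your plan. As it stands, part (b) is not proved.
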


\begin{proof}
Part (a) follows from 
\cite[Proposition 4.6 (a)]{Ath20+}; we will 
provide a direct bijective proof.
Let $Q(n,k,j)$ be defined as in the proof of 
\Cref{prop:h-transform}, so that $p_\aA(n,k,j) = 
|Q(n,k,j)|$. Also, let $\widetilde{Q}(n,n-k,n-j)$ 
be the set of triples defining $Q(n,n-k,n-j)$, 
except that the set $S$ which is partitioned 
satisfies $\{k+1,\dots,n\} \subseteq S \subseteq [n]$,
instead of $[n-k] \subseteq S \subseteq [n]$, and 
that the monochromatic block, if present, must be
contained in $\{k+1,\ldots,n\}$. Since, clearly, 
$|\widetilde{Q}(n,n-k,n-j)| = p_\aA(n,n-k,n-j)$, 
to prove (a) it suffices to find a 
bijection from $Q(n,k,j)$ to $\widetilde{Q}
(n,n-k,n-j)$. Given a triple in $Q(n,k,j)$, 
consisting of an ordered partition of $[k] 
\subseteq S \subseteq [n]$ and a suitable coloring of 
the elements of $S$, we construct a triple in 
$\widetilde{Q}(n,n-k,n-j)$ as follows. We first 
switch the colors of all elements of $S$ from white 
to black and vice versa. If the first block was 
monochromatic, we delete it from the partition. The 
block $[n] \sm S$, if nonempty, is then added to 
the constructed ordered partition as its new first 
block, with all its elements colored black. We leave 
to the reader to verify that this process gives a well 
defined map. The inverse map can be constructed by 
the same procedure, applied to the triples in 
$\widetilde{Q}(n,n-k,n-j)$.

For part (b), we note that the proof of 
\cite[Corollary 4.4]{KN09} works, with the symmetry 
of \cite[Lemma 2.5]{BW08} replaced by that of part (a).
\end{proof}

We recall that a sequence $(a_0, a_1,\dots,a_s) \in 
\NN^{s+1}$ is an \emph{M-sequence} if it is the 
Hilbert function of a standard graded $\FF$-algebra, 
where $\FF$ is a field. Macaulay \cite{Mac27} provided 
a characterization of M-sequences by means of 
numerical conditions (see, e.g., \cite{StaCCA}).
\Cref{thm:LefschetzShellable} has the following 
numerical consequences for the $h$-vector of the 
antiprism triangulation of a Cohen--Macaulay complex.

\begin{corollary}
Let $\Delta$ be a Cohen--Macaulay simplicial complex 
of dimension $n-1$ and let $g(\sd_\aA(\Delta)) = 
(1, h_1(\sd_\aA(\Delta)) - 
h_0(\sd_\aA(\Delta)),\dots,h_{\lfloor n/2\rfloor}
(\sd_\aA(\Delta)) - h_{\lfloor n/2\rfloor - 1}
(\sd_\aA(\Delta)))$.  
\begin{itemize}
\item[(a)] $g(\sd_\aA(\Delta))$ is an M-sequence.
\item[(b)] $h(\sd_\aA(\Delta))$ is unimodal. The peak 
is at position $n/2$, if $n$ is even, and at position 
$(n-1)/2$ or $(n+1)/2$, if $n$ is odd.
\item[(c)] $h_i(\sd_\aA(\Delta))\leq h_{n-1-i}
(\sd_\aA(\Delta))$ for all $0 \le i \le 
\lfloor(n-2)/2\rfloor$.
\end{itemize}
\end{corollary}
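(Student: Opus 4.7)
The plan is to reduce the corollary to the shellable case, where \Cref{thm:LefschetzShellable} applies, and then to combine the resulting almost strong Lefschetz property with the combinatorial inequalities of \Cref{lemma:propertiesTrafo}. The reduction is legitimate because the transformation formula \eqref{eq:h-transform} expresses $h(\sd_\aA(\Delta))$ entirely in terms of $n$ and $h(\Delta)$; since, by \cite[Theorem~3.3]{StaCCA}, Cohen--Macaulay and shellable $(n-1)$-dimensional simplicial complexes realize the same sets of $h$-vectors, one can choose a shellable $\Delta'$ with $h(\Delta') = h(\Delta)$, for which $h(\sd_\aA(\Delta)) = h(\sd_\aA(\Delta'))$. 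Therefore, for the enumerative statements (a), (b), (c), one may assume that $\sd_\aA(\Delta)$ itself is almost strong Lefschetz over $\RR$.

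For parts (a) and (c), I would let $R = \RR[\sd_\aA(\Delta)]/\Theta\RR[\sd_\aA(\Delta)]$ and fix an almost strong Lefschetz element $\omega \in R_1$, so that the multiplication $\cdot \omega^{n-1-2i}: R_i \to R_{n-1-i}$ is injective for $0 \le i \le \lfloor (n-2)/2 \rfloor$. Comparing dimensions of source and target immediately yields (c). Since that map factors through successive multiplications by $\omega$, the map $\cdot\omega: R_i \to R_{i+1}$ is itself injective in the same range of $i$, so the Hilbert function of the standard graded $\RR$-algebra $R/(\omega R)$ in degrees $0, 1, \dots, \lfloor n/2 \rfloor$ coincides with the sequence $g(\sd_\aA(\Delta))$. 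By Macaulay's theorem, $g(\sd_\aA(\Delta))$ is therefore an M-sequence, which establishes (a).

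Part (b) follows without any Lefschetz input, directly from \Cref{lemma:propertiesTrafo}(b) together with the nonnegativity of $h_k(\Delta)$ for Cohen--Macaulay $\Delta$. Taking nonnegative $\RR$-linear combinations of the pointwise (in $k$) unimodality inequalities for $p_\aA(n,k,j)$, with weights $h_k(\Delta)$, and invoking \eqref{eq:h-transform}, one obtains
\[ h_0(\sd_\aA(\Delta)) \, \le \, h_1(\sd_\aA(\Delta)) \, \le \, \cdots \, \le \, h_{\lfloor n/2 \rfloor}(\sd_\aA(\Delta)) \]
and
\[ h_n(\sd_\aA(\Delta)) \, \le \, h_{n-1}(\sd_\aA(\Delta)) \, \le \, \cdots \, \le \, h_{\lceil n/2 \rceil}(\sd_\aA(\Delta)), \]
which together locate the peak at position $n/2$ when $n$ is even and at $(n-1)/2$ or $(n+1)/2$ when $n$ is odd.

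There is no genuine obstacle: the heavy lifting has been done upstream, in \Cref{thm:LefschetzShellable} and \Cref{lemma:propertiesTrafo}. The one point deserving care is the reduction to the shellable case at the outset, but this is a one-line observation once the transformation formula \eqref{eq:h-transform} is in hand.
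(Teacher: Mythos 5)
Your proposal is correct and follows essentially the same route as the paper: parts (a) and (c) by the standard Lefschetz-factoring argument (the paper cites \cite[Sections~3.1--3.2]{HMMNWW13} for this), and part (b) via \Cref{lemma:propertiesTrafo}(b) together with $h_k(\Delta) \ge 0$ (the paper invokes the proof of \cite[Corollary~4.7]{KN09}). The one step you spell out that the paper leaves implicit is the reduction from Cohen--Macaulay to shellable $\Delta$ using the $h$-transformation formula \eqref{eq:h-transform} and the coincidence of $h$-vector sets for the two classes; this is a worthwhile clarification, since \Cref{thm:LefschetzShellable} is stated only for shellable complexes while the corollary is for Cohen--Macaulay ones.
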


\begin{proof}
Parts (a) and (c) follow by standard arguments used 
when one works with Lefschetz properties, see, e.g., 
\cite[Sections 3.1--3.2]{HMMNWW13}.
For part (b), the proof of \cite[Corollary 4.7]{KN09} 
works, with \Cref{lemma:propertiesTrafo} replacing 
\cite[Corollary 4.4 (ii)]{KN09}.
\end{proof}

We conclude this section by recording the following 
properties of the Stanley-Reisner ring of the antiprism 
triangulation of any simplicial complex.

\begin{proposition}
Let $\Delta$ be a simplicial complex. 
\begin{itemize}
\item[(a)] $\dim (\FF[\sd_\aA(\Delta)]) = 
\dim(\FF[\Delta])$.
\item[(b)] $\depth (\FF[\sd_\aA(\Delta)]) = \depth 
(\FF[\Delta])$.
\item[(c)] 
\begin{equation*}\reg(\FF[\sd_\aA(\Delta)]) \ = \
\begin{cases}
\dim(\Delta), \quad \quad \mbox{if } 
\widetilde{H}_{\dim(\Delta)}(\Delta;\FF) = 0,\\
\dim(\Delta)+1, \quad \mbox{if } 
\widetilde{H}_{\dim(\Delta)}(\Delta;\FF) \neq 0.
\end{cases}
\end{equation*}
\end{itemize}
\end{proposition}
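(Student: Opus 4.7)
The plan is to exploit that $\sd_\aA(\Delta)$ triangulates $\Delta$, so that $|\sd_\aA(\Delta)| \cong |\Delta|$. Part (a) is then immediate, since the Krull dimension of a Stanley--Reisner ring equals one plus the simplicial (and hence topological) dimension of the underlying complex. Part (b) follows from Munkres' theorem that $\depth(\FF[\Gamma])$ depends only on the pair $(|\Gamma|, \FF)$, which in turn gives $\depth(\FF[\sd_\aA(\Delta)]) = \depth(\FF[\Delta])$.

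For part (c), I would apply Hochster's formula in the form
\[
\reg(\FF[\Gamma]) \ = \ \max\{j \ge 0 \,:\, \widetilde{H}_{j-1}(\Gamma_W; \FF) \neq 0 \text{ for some } W \subseteq V(\Gamma)\},
\]
where $\Gamma_W$ is the induced subcomplex on $W$. Set $\Gamma = \sd_\aA(\Delta)$ and $d = \dim(\Delta) + 1$, so that $\dim(\Gamma) = d - 1$. The upper bound $\reg(\FF[\Gamma]) \le d$ is automatic, because every induced subcomplex has dimension at most $d - 1$. For the matching lower bound $\reg(\FF[\Gamma]) \ge d - 1$, assume $d \ge 2$ (the bound $\reg \ge 0$ being trivial for $d \le 1$), choose a facet $F$ of $\Delta$ with $|F| = d$, and take $W_F = \{(G, v) : G \in \partial(2^F),\, v \in G\}$. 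A direct inspection of \Cref{def:sdA-complex} shows that adjacency in $\gG_\aA(\Delta)$ between two pointed faces depends only on the two pointed faces themselves (not on the ambient complex), so $\Gamma_{W_F}$ is canonically identified with $\sd_\aA(\partial(2^F))$, a triangulation of the sphere $S^{d-2}$ whose top reduced homology is nonzero.

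To resolve the dichotomy $\reg \in \{d-1, d\}$, I would observe that $\Gamma$ has no $d$-dimensional faces, so for every $W$ the chain group $C_d(\Gamma_W; \FF)$ vanishes, and hence
\[
\widetilde{H}_{d-1}(\Gamma_W; \FF) \ = \ Z_{d-1}(\Gamma_W; \FF) \ \subseteq \ Z_{d-1}(\Gamma; \FF) \ = \ \widetilde{H}_{d-1}(\Gamma; \FF) \ = \ \widetilde{H}_{d-1}(\Delta; \FF),
\]
where the last equality uses that $\Gamma$ triangulates $\Delta$. Therefore some induced subcomplex has nonvanishing top homology iff $\widetilde{H}_{d-1}(\Delta; \FF) \neq 0$, which settles (c). The only technically nontrivial step is the induced-subcomplex identification $\Gamma_{W_F} = \sd_\aA(\partial(2^F))$; the remaining ingredients are all direct consequences of the graph-theoretic description in \Cref{def:sdA-complex} and Hochster's formula.
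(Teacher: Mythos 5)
Your proof is correct and follows the same strategy as the paper: part (a) from preservation of dimension under triangulation, part (b) from Munkres' theorem on topological invariance of depth, and part (c) via Hochster's formula. The paper leaves the Hochster computation as a one-line pointer (to \cite[Corollary~5.12]{MS05} and \cite[Proposition~2.6]{KW08}); you have correctly filled in those details, with the clean observations that the induced subcomplex on the pointed faces of $\partial(2^F)$ is exactly $\sd_\aA(\partial(2^F))$ (because adjacency in $\gG_\aA(\Delta)$ is intrinsic to the two pointed faces) and that $\widetilde{H}_{d-1}(\Gamma_W) = Z_{d-1}(\Gamma_W) \subseteq \widetilde{H}_{d-1}(\Delta)$ since $\Gamma$ has no $d$-faces.
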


\begin{proof}
Part (a) is clear, since $\dim\sd_\aA(\Delta) = 
\dim(\Delta)$. Part (b) follows from \cite[Theorem
3.1]{Mun84}, since $\Delta$ and $\sd_\aA(\Delta)$ 
have homeomorphic geometric realizations. Part (c)
follows from an application of Hochster's formula 
\cite[Corollary~5.12]{MS05}; one can also mimick 
the detailed argument for the barycentric subdivision 
given in the proof of \cite[Proposition 2.6]{KW08}.
%
\end{proof}

\section{Further directions}
\label{sec:direct}

This section concludes with comments, open problems 
and further directions for research.

\medskip
\noindent

1. The question asking which uniform triangulations 
transform $h$-polynomials with nonnegative coefficients 
into polynomials with only real roots was raised in 
\cite{Ath20+}. This property has been verified for 
several examples, including the prototypical one
of the barycentric subdivision \cite{BW08}, and is 
conjectured in this paper for the antiprism 
triangulation. We believe that this property is not 
uncommon among uniform triangulations. 

For instance, consider any word $w = w_1 w_2 \cdots 
w_d$ with $w _i \in \{a, b\}$ for every $i \in [d]$ 
and let $\Delta$ be a $d$-dimensional simplicial 
complex. Let $\sd_w(\Delta)$ be the triangulation 
of $\Delta$ defined inductively as follows. Assume 
that all faces of $\Delta$ of dimension less than 
$j$ have been triangulated, for some $j \in [d]$. 
Then, triangulate each $j$-dimensional face $F \in
\Delta$ by the antiprism construction, if $w_j = 
a$, and by the coning construction, if $w_j = b$,
over the already triangulated boundary of $F$. By 
applying this process for $j = 1, 2,\dots,d$, in
this order, we get a triangulation $\sd_w(\Delta)$ 
of $\Delta$ which coincides with $\sd_\aA(\Delta)$, 
when $w = aa \cdots a$, and with $\sd(\Delta)$, 
when $w = bb \cdots b$. It seems plausible that 
this triangulation has the same property for every 
$w$, but it is not easy to deduce such a statement
from the results of \cite{Ath20+} and this paper.
Example~\ref{ex:binom-euler} corresponds to the 
word $w = bb \cdots ba$. 

\medskip
\noindent
2. The symmetric polynomials $\ell_\aA(\sigma_n, x)$ 
and $\bar{p}_\aA(\sigma_n, x)$ were shown and 
conjectured, respectively, to be 
$\gamma$-positive in Section~\ref{sec:enumerate}. It
is an interesting (and possibly challenging) open 
problem to find explicit combinatorial interpretations 
of the corresponding $\gamma$-coefficients. Similar 
remarks apply to the symmetric polynomials 
$h_\aA(\partial \sigma_n, x)$ and 
$\theta_\aA(\sigma_n, x) = h_\aA(\sigma_n, x) - 
h_\aA(\partial \sigma_n, x)$, shown to be 
real-rooted, and hence $\gamma$-positive, in 
Section~\ref{sec:enumerate}. 

\medskip
\noindent
3. The local $h$-polynomial of the barycentric 
subdivision of any CW-regular subdivision of the 
simplex was shown to be $\gamma$-positive 
in~\cite{KMS19}. Does this hold if `barycentric 
subdivision' and `CW-regular subdivision' are 
replaced by `antiprism triangulation' and 
`CW-regular simplicial subdivision', 
respectively?

\medskip
\noindent
4. The $h$-polynomial of the barycentric subdivision 
of any doubly Cohen--Macaulay simplicial complex 
and the barycentric subdivision of any triangulation 
of a ball were shown to have a nonnegative real-rooted 
symmetric decomposition in~\cite[Section~5]{BS20} 
and~\cite[Section~8]{Ath20+}, respectively. Do these 
statements hold if `barycentric subdivision' is 
replaced by `antiprism triangulation'?

\medskip
\noindent
\emph{Note added in revision}. After this paper was 
written, the almost strong Lefschetz property was 
established for a very broad class of 
triangulations of Cohen--Macaulay polyhedral complexes 
(including antiprism triangulations of Cohen--Macaulay
simplicial complexes) by Adiprasito 
and Yashfe~\cite[Theorem~46]{AY20} using completely 
different methods and tools, including the partition 
complex.


\bigskip
\noindent \textbf{Acknowledgments}. Part of this 
research was conducted while the first author was 
in residence at the Institute Mittag-Leffler in 
Djursholm, Sweden during the `Algebraic and 
Enumerative Combinatorics' program in Spring 2020, 
supported by the Swedish Research Council under 
grant no. 2016-06596. The second and third author 
were partially supported by the German Research 
Council DFG GRK-1916. The third author is grateful 
to Satoshi Murai for pointing out that his original 
proof of \cite[Proposition 3.2]{Mur10} can be made 
less technical by using the exact sequence 
\eqref{eq:exact}.

Data sharing not applicable to this article as no
datasets were generated or analysed during the 
current study.

\end{document}